\newcommand{\no}{\noindent}
\newtheorem{thm}{Theorem}[section]
\newtheorem{prop}[thm]{Proposition}
\newtheorem{cor}[thm]{Corollary}
\newtheorem{rem}[thm]{Remark}
\newtheorem{defi}[thm]{Definition}
\newtheorem{ex}[thm]{Example}
\newtheorem{lem}[thm]{Lemma}
\newtheorem{prob}[thm]{Problem}
\newtheorem{ass}[thm]{Assumption}
\numberwithin{equation}{section}
\newcommand{\R}{\mathbb{R}}
\newcommand{\ds}{\displaystyle}
\newcommand{\sm}{\setminus}
\newcommand{\pd}{\partial}
\newcommand{\al}{\alpha}
\newcommand{\be}{\beta}
\newcommand{\de}{\delta}
\newcommand{\De}{\Delta}
\newcommand{\ep}{\varepsilon}
\newcommand{\la}{\lambda}
\newcommand{\f}{\varphi}
\newcommand{\Ome}{\Omega}
\renewcommand{\(}{\left(}
\renewcommand{\)}{\right)}
\renewcommand{\lvert}{\left\vert}
\renewcommand{\rvert}{\right\vert}
\DeclareMathOperator{\diam}{diam}
\DeclareMathOperator{\conv}{conv}
\DeclareMathOperator{\dist}{dist}
\DeclareMathOperator{\Vol}{Vol}
\DeclareMathOperator{\Refl}{Refl}
\DeclareMathOperator{\Span}{Span}
\DeclareMathOperator{\Rot}{Rot}
\DeclareMathOperator{\Ker}{Ker}
\def\c#1{\overset{\mbox{\tiny $\circ$}}{#1}} 
\begin{document}

\title{\bf Geometric estimation of a potential and cone conditions of a body}

\author{Shigehiro Sakata}

\date{\today}

\maketitle

\begin{abstract}
We investigate a potential obtained as the convolution of a radially symmetric function and the characteristic function of a body (the closure of a bonded open set) with exterior cones. In order to restrict the location of a maximizer of the potential into a smaller closed region contained in the interior of the body, we give an estimate of the potential using the exterior cones of the body. Moreover, we apply the result to the Poisson integral for the upper half space.\\

\no{\it Keywords and phrases}. Hot spot, Poisson integral, solid angle, illuminating center, Riesz potential, Hadamard finite part, renormalization, $r^{\al -m}$-potential, minimal unfolded region, heart, cone condition.\\
\no 2010 {\it Mathematics Subject Classification}: 31B25, 35B38, 35B50, 51M16, 52A40.
\end{abstract}
\section{Introduction}
Let $\Ome$ be a body (the closure of a bounded open set) in $\R^m$. We consider a potential of the form
\begin{equation}\label{K}
K_\Ome (x,t)=\int_\Ome k\( \lvert x-\xi \rvert ,t\)d\xi ,\ x \in \R^m ,\ t>0,
\end{equation}
and investigate its spatial maximizer.

When $k(r,t)$ is given by the Gauss kernel, the potential $K_\Ome(x,t)$ is the solution of the Cauchy problem for the heat equation with initial datum $\chi_\Ome$,
\begin{equation}\label{W}
W_\Ome (x,t)=\frac{1}{\( 4\pi t\)^{m/2}} \int_\Ome \exp \( -\frac{\lvert x-\xi \rvert^2}{4t}\) d\xi ,\ x\in \R^m ,\ t>0 .
\end{equation}
A spatial maximizer of $W_\Ome$ is called a {\it hot spot} of $\Ome$ at time $t$.

In \cite{CK}, Chavel and Karp showed that $\Ome$ has a hot spot for each $t$, that any hot spot belongs to the convex hull of $\Ome$, and that the set of hot spots converges to the one-point set of the centroid (center of mass) of $\Ome$ as $t$ goes to infinity with respect to the Hausdorff distance. Furthermore, calculating the Hessian of $W_\Ome (\cdot ,t) :\R^m \to \R$, in \cite{JS}, Jimbo and Sakaguchi indicated that $\Ome$ has a unique hot spot whenever $t \geq (\diam \Ome)^2/2$. Roughly speaking, the large-time behavior of hot spots was studied in \cite{CK, JS}. (To tell the truth, in \cite{CK, JS}, the above properties of hot spots were shown for a non-zero non-negative bounded compactly supported initial datum. But, in this paper, we are interested in the case where the initial datum is given by the characteristic function of a body.)

In contrast, in \cite{KP}, Karp and Peyerimhoff gave a geometric heat comparison criteria and investigated the small-time behavior of hot spots. Roughly speaking, they compared two heat flows for two points in two different bodies by using the distance functions from the complements and showed that any sequence of hot spots of $\Ome$ at time $t_\ell$ converges to an {\it incenter} of $\Ome$ as $t_\ell$ tends to zero. Let us review their exact statement as below: Let $X$ and $Y$ be bodies in $\R^m$; Fix two constants $R> S\geq 0$; Let $X'= \{ x \in X \vert \dist (x, X^c) \geq R\}$, and $Y' = \{ y \in \R^m \vert \dist (y,Y^c) \leq S\}$; Then,  we can choose a small time $\tau$ such that if $0<t<\tau$, then, for any $x \in X'$ and $y \in Y'$, we have $H_X (x,t) > H_Y(y,t)$; Taking $X=Y=\Ome$ and $R=R_\infty (\Ome )= \max \dist (\xi ,\Ome^c)$ (the {\it inradius}), we can conclude that, for any decreasing sequence $\{ t_\ell\}$ with zero limiting value and any hot spot $h(t_\ell)$ of $\Ome$ at time $t_\ell$, the distance between $h(t_\ell)$ and the set of incenters $\mathcal{I}_\Ome = \{ \xi \in \Ome \vert \dist (\xi ,\Ome^c ) =R_\infty (\Ome )\}$ tends to zero as $\ell$ goes to infinity. (To tell the truth, they investigated the above comparison theorem in Riemannian manifolds. But, in this paper, we are interested in Euclidean case.) We also refer to \cite[pp. 2--3]{MS} for the small-time behavior of hot spots.

On the other hand, in \cite{Sak1}, for the kernel $k$ in \eqref{K}, the author gave a sufficient condition implying the results shown in \cite{CK, JS}. As a by-product, for example, his sufficient condition can be applied to the {\it Poisson integral} for the upper half space,
\begin{equation}\label{P}
P_\Ome (x,h) =\frac{2h}{\sigma_m \(S^m \)} \int_\Ome \( \lvert x -\xi \rvert^2 +h^2 \)^{-(m+1)/2} d\xi ,\ x \in \R^m ,\ h>0 ,
\end{equation}
where $\sigma_m$ denotes the $m$-dimensional spherical Lebesgue measure. Precisely, his sufficient condition implies that the function $P_\Ome (\cdot ,h) :\R^m \to \R$ has a maximizer for each $h$, that any maximizer of $P_\Ome (\cdot ,h)$ belongs to the convex hull of $\Ome$, that the set of maximizers of $P_\Ome (\cdot ,h)$ converges to the one-point set of the centroid of $\Ome$ as $h$ goes to infinity with respect to the Hausdorff distance, and that $P_\Ome (\cdot ,h)$ has a unique maximizer whenever $h\geq \sqrt{m+2} \diam \Ome$.

Here, we remark that the Poisson integral $P_\Ome$ satisfies the Laplace equation for the upper half space, that is, 
\begin{equation}
\( \sum_{j=1}^{m} \frac{\pd^2}{\pd x_j^2} + \frac{\pd^2}{\pd h^2} \) P_\Ome (x,h) =0,\ x \in \R^m ,\ h>0,
\end{equation}
and we have the boundary condition
\begin{equation}
\lim_{h \to 0^+} P_\Ome (x,h) = \chi_\Ome (x) ,\ x \in \R^m \sm \pd \Ome .
\end{equation}

In order to understand the geometric meaning of the author's results on maximizers of $P_\Ome (\cdot ,h)$, let $A_\Ome = \sigma_m (S^m) P_\Ome /2$. The function $A_\Ome$ is obtained in the following manner: Let $x$ be a point in $\R^m$, and $h$ a positive constant; Define the map
\begin{equation}
p_{(x,h)} : \Ome \times \{ 0 \} \ni (\xi ,0) \mapsto \frac{(\xi ,0) -(x,h)}{\lvert (\xi ,0) -(x,h) \rvert} \in S^m ;
\end{equation}
The {\it solid angle} of $\Ome$ at $(x,h)$ is defined as the $m$-dimensional spherical Lebesgue measure of the image $p_{(x,h)} (\Ome )$ (see Figure \ref{angle}), and direct calculation shows
\begin{equation}
\sigma_m \( p_{(x,h)} (\Ome )\) = h \int_\Ome  \( \lvert x -\xi \rvert^2 +h^2 \)^{-(m+1)/2} d\xi = A_\Ome (x,h) .
\end{equation}
In \cite{Sh}, the solid angle of $\Ome$ as $(x,h)$ was regarded as the ``brightness'' of $\Ome$ having a light source at $(x,h)$. We call a maximizer of $A_\Ome (\cdot ,h)$ an {\it illuminating center} of $\Ome$ of height $h$. Thus, the properties of $P_\Ome$ shown in \cite{Sak1} are understood as the large-height behavior of illuminating centers. In other words, it was shown that the large-parameter behavior of spatial maximizers of $P_\Ome$ is similar to that of $W_\Ome$.
\begin{figure}[hbtp]
\centering
\scalebox{0.28}{\includegraphics[clip]{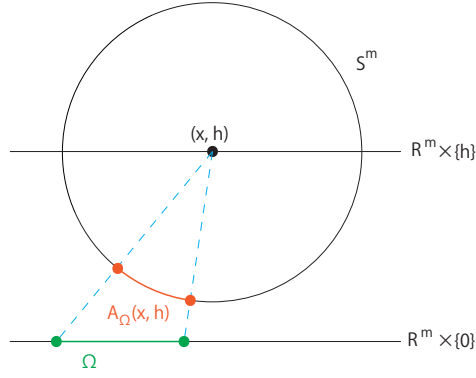}}
\caption{The solid angle of $\Ome$ at $(x,h)$}
\label{angle}
\end{figure}

From such backgrounds, in this paper, in order to compare small-parameter behavior of spatial maximizers of $P_\Ome$ and $W_\Ome$, we mainly investigate the small-height behavior of illuminating centers. Informal computation shows
\begin{equation}
\frac{A_\Ome (x,h)}{h} 
=\int_\Ome  \( \lvert x -\xi \rvert^2 +h^2 \)^{-(m+1)/2} d\xi  
\to \int_\Ome  \lvert x -\xi \rvert^{-(m+1)} d\xi
\end{equation}
as $h$ tends to $0^+$. But the right hand side diverges whenever $x$ is in $\Ome$. Then, for a point $x$ in the interior of $\Ome$, let us consider its {\it Hadamard finite part}, 
\begin{align}
\label{V1}
V_\Ome^{(-1)} (x) 
&=\lim_{\ep \to 0^+} \( \int_{\Ome \sm B_\ep (x)} \lvert x-\xi \rvert^{-(m+1)} - \frac{\sigma_{m-1} \( S^{m-1} \)}{\ep} \) \\
\label{V2} 
&=\int_{\Ome \sm B_\ep (x)} \lvert x-\xi \rvert^{-(m+1)} - \frac{\sigma_{m-1} \( S^{m-1} \)}{\ep} .
\end{align}
Here, we remark that the latter equality \eqref{V2} holds whenever $0< \ep < \dist (x, \Ome^c)$ (see Proposition \ref{without_limit}). 

It is expected that any sequence of illuminating centers of height $h_\ell$ converges to a maximizer of $V_\Ome^{(-1)}$ as $h_\ell$ tends to zero. This expectation comes from the following procedure: Let $\ep >0$ be small enough; Suppose that, for any small enough $h>0$, any illuminating center is at least $\ep$ away from the boundary of $\Ome$; Since the Poisson kernel is radially symmetric, the solid angle of $B_\ep (x)$ at $(x,h)$ depends only on $\ep$ and $h$; Decomposing the solid angle function as $A_\Ome (x,h)= A_{\Ome \sm B_\ep (x)}(x,h)+A_{B_\ep (x)}(x,h)$, a point $c(h)$ is an illuminating center if and only if it is a maximizer of $A_{\Ome \sm B_\ep (\cdot )}(\cdot ,h)$; As $h$ tends to zero, the kernel $( \vert x-\xi \vert^2 +h^2)^{-(m+1)/2}$ converges to $\vert x-\xi \vert^{-(m+1)}$ uniformly for $\xi$ in $\Ome \sm B_\ep (x)$; Roughly speaking, if the height parameter $h$ is small enough, then we have
\begin{equation}
\frac{A_\Ome(x,h)}{h} 
= \frac{A_{\Ome \sm B_\ep (x)}(x,h)}{h} +\frac{A_{B_\ep (x)}(x,h)}{h} 
\approx V_\Ome^{(-1)}(x) +\frac{\sigma_{m-1} \( S^{m-1} \)}{\ep} +\frac{A_{B_\ep (x)}(x,h)}{h} 
\end{equation}
for any point $x$ in the interior of $\Ome$ with $\dist (x,\Ome^c) >\ep$.

In order to formulate the above procedure, we have to give a closed subset in the interior of $\Ome$ such that it contains all the illuminating centers for any small enough $h>0$. This is because we can use the expression \eqref{V2} of the potential $V_\Ome^{(-1)}$ only in the interior of $\Ome$. Namely, in \eqref{V2}, we want to take a uniform $\ep$ for illuminating centers of any small enough height and maximizers of $V_\Ome^{(-1)}$. 

We refer to \cite{BMS, BM, O3, O4, Sak1, Sak2} for the study on the location of maximizers of a potential. Some authors tried to restrict the location of maximizers of a potential into a smaller region. Using the {\it moving plane argument} (\cite{GNN, Ser}), all the maximizers of a potential with a radially symmetric strictly decreasing kernel are contained in the {\it minimal unfolded region} of $\Ome$. (The minimal unfolded region is sometimes called the {\it heart}.) But, in general, the minimal unfolded region of $\Ome$ is not contained in the interior of $\Ome$ (see Example \ref{uf_triangle}).

In this paper, assuming the {\it uniform interior cone condition} for the complement of the body $\Ome$ (see Definition \ref{interior}) and taking the following three steps, we formulate the above procedure:
\begin{description}
\item[Step 1.] We give a constant $0< \tilde{R} < R_\infty (\Ome)$ such that, for any $x \in \Ome$ with $\dist (x,\Ome^c) = R_\infty (\Ome )$ and $y \in \Ome$ with $\dist (y,\Ome^c) \leq \tilde{R}$, we have $V_\Ome^{(-1)} (y) < V_\Ome^{(-1)} (x)$. Namely, any maximizer of $V_\Ome^{(-1)}$ belongs to the inner-parallel body of $\Ome$ of radius $\tilde{R}$.
\item[Step 2.] For any constant $0<b<1$, there exits a positive $h_0$ such that if $0<h<h_0$, then, for any $x \in \Ome$ with $\dist (x,\Ome^c) = R_\infty (\Ome )$ and $y \in \R^m$ with $\dist (y,\Ome^c) \leq b\tilde{R}$, we have $A_\Ome (y,h) < A_\Ome (x,h)$. Namely, if $h$ is sufficiently small, then any illuminating center belongs to the inner-parallel body of $\Ome$ of radius $b\tilde{R}$.
\item[Step 3.] The limit point of any illuminating center of height $h_\ell$ must be a maximizer of $V_\Ome^{(-1)}$.
\end{description}

Moreover, the above argument can be extended to a general case. Precisely, we give the same estimate as in the first step to the Hadamard finite part of the {\it Riesz potential},
\begin{equation}
V_\Ome^{(\al )}(x)=
\begin{cases}
\ds \lim_{\ep \to 0^+} \( \int_{\Ome \sm B_\ep (x)} \lvert x-\xi \rvert^{\al-m}d\xi -\frac{\sigma_{m-1} \( S^{m-1}\)}{-\al} \ep^\al \) &(\al <0) ,\\
\ds \lim_{\ep \to 0^+} \( \int_{\Ome \sm B_\ep (x)} \lvert x-\xi \rvert^{-m}d\xi -\sigma_{m-1} \( S^{m-1}\) \log \frac{1}{\ep} \) &(\al =0) .
\end{cases}
\end{equation}
Also, we give the same estimate as in the second step to a potential of the form \eqref{K}. In other words, our main result in this paper is the estimate of a potential like the second step, and, as its by-product, we derive the small-height behavior of illuminating centers.\\

Throughout this paper, $\c{X}$, $\bar{X}$, $X^c$, $R_\infty (X)$ and $\diam X$ denote the interior, the closure, the complement, the inradius and the diameter of a set $X$ in $\R^m$, respectively. For a set $X$ in $\R^m$ and a positive constant $\rho$, the symbol $X \sim \rho B^m$ denotes the inner-parallel body of $X$ of radius $\rho$, that is, $X \sim \rho B^m =\{ x \in X \vert \dist (x ,X^c ) \geq \rho\}$. We denote the $m$-dimensional closed ball of radius $\rho$ and centered at $x$ by $B_\rho (x) =\rho B^m + x$. We denote a point $x$ in $\R^m$ by $x = \( x_1,\ldots,x_m\)$. The $N$-dimensional spherical Lebesgue measure is denoted by $\sigma_N$. In particular, the symbol $\sigma$ is used in the case of $N=m-1$, for short.\\

\no{\bf Acknowledgements.}
The author would like to express his deep gratitude to Professor Jun O'Hara, Professor Kazushi Yoshitomi and Professor Hiroaki Aikawa. O'Hara gave him kind advice throughout writing this paper. Yoshitomi informed him of some cone conditions. Aikawa informed him of the proof of Lemma \ref{cone_condition} and Remark \ref{rem_cone}.
\section{Preliminaries}
\subsection{Cone conditions}
Let us prepare the cone conditions which are related to the complexity of the boundary of a body. Throughout this paper, we understand that $C$ is an open cone of vertex $x$, axis direction $v$, aperture angle $\kappa$ and height $\de$ if $C$ is given as 
\begin{equation}
C= \left\{ x +\rho Rv \lvert 0 < \rho < \de,\ R \in SO(m),\ Rv \cdot v > \cos \frac{\kappa}{2} \right\} \right. .
\end{equation}

\begin{defi}\label{interior}
{\rm An open set $U$ in $\R^m$ satisfies the {\it uniform interior cone condition} if there exists an open cone $C$ in $\R^m$ such that, for each point $x \in U$, we can take an open cone of vertex $x$ contained in $U$ and congruent to $C$.}
\end{defi}

\begin{defi}\label{inner}
{\rm An open set $U$ in $\R^m$ satisfies the {\it uniform boundary inner cone condition} if there exists an open cone $C$ in $\R^m$ such that, for each point $x \in \pd U$, we can take an open cone of vertex $x$ contained in $U$ and congruent to $C$.}
\end{defi}

The proof of the following Lemma is due to Hiroaki Aikawa.

\begin{lem}\label{cone_condition}
Let $U$ be an open set in $\R^m$. If $U$ satisfies the uniform interior cone condition for an open cone $C$ of aperture angle $\kappa$ and height $\de$, then it also satisfies the uniform boundary inner cone condition for the cone $C$.
\end{lem}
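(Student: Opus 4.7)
The plan is to start with a sequence of interior points converging to the boundary point, extract via compactness a limiting axis direction, and then verify that the open cone at the boundary point with this limiting axis is contained in $U$ by a pointwise open-ness argument.

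More concretely, fix $x \in \pd U$. Since $\pd U$ lies in the closure of $U$, choose a sequence $\{x_n\} \subset U$ with $x_n \to x$. By the uniform interior cone condition, for each $n$ there exists an axis direction $v_n \in S^{m-1}$ such that the open cone
\[
C_n = \left\{ x_n + \rho R v_n \,\middle|\, 0<\rho<\de,\ R\in SO(m),\ Rv_n \cdot v_n > \cos\tfrac{\kappa}{2} \right\}
\]
is contained in $U$. By compactness of $S^{m-1}$, passing to a subsequence we may assume $v_n \to v \in S^{m-1}$. Let $C_x$ denote the open cone with vertex $x$, axis $v$, aperture $\kappa$ and height $\de$.

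The claim is that $C_x \subset U$. Pick any $y \in C_x$, so $y = x + \rho R v$ with $0<\rho<\de$ and $Rv \cdot v > \cos(\kappa/2)$, where both inequalities are strict since $C_x$ is open. Set $\rho_n = \lvert y - x_n \rvert$ and $w_n = (y - x_n)/\rho_n$. Since $x_n \to x$ and $|y-x|=\rho$, we have $\rho_n \to \rho < \de$ and $w_n \to (y-x)/|y-x| = Rv$. Combining $w_n \to Rv$ with $v_n \to v$ gives $w_n \cdot v_n \to Rv \cdot v > \cos(\kappa/2)$. Hence for all sufficiently large $n$, both $\rho_n < \de$ and $w_n \cdot v_n > \cos(\kappa/2)$ hold strictly, so we can write $y - x_n = \rho_n R_n v_n$ for some $R_n \in SO(m)$ with $R_n v_n \cdot v_n > \cos(\kappa/2)$, which means $y \in C_n \subset U$. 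Thus $C_x \subset U$, and since $C_x$ is a cone congruent to $C$ with vertex at the boundary point $x$, this establishes the uniform boundary inner cone condition.

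The main subtle point, and what I expect is the only real obstacle, is the transition from $y_n \in U$ to $y \in U$: naively taking a limit only gives $y \in \overline{U}$. The argument above sidesteps this by not approximating $y$ by a sequence $y_n \to y$, but instead by showing that the single fixed point $y$ eventually lies in each cone $C_n$ itself, thanks to the strict inequalities available at interior points of the open cone $C_x$ and the continuity of the angle and distance functions in $(x_n, v_n)$.
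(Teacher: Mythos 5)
Your proof is correct, and the overall strategy coincides with the paper's: pick a sequence of interior points $x_n \to x$, extract a convergent subsequence of axis directions $v_n \to v$ by compactness of $S^{m-1}$, and show that the cone at $x$ with axis $v$ lies in $U$. Where you diverge is in the verification of that last inclusion. The paper argues by contradiction: it supposes some $y = x + \rho R v \in C(x) \cap U^c$, observes that the parallel point $\xi^n + \rho R v^n$ lies in $C(\xi^n) \subset U$, bounds $\dist(\xi^n + \rho R v^n, U^c)$ from below by a quantity like $\min\{\de - \rho,\ \rho\sin(\kappa/2 - \theta)\}$, and then notes that $\lvert y - (\xi^n + \rho R v^n)\rvert \to 0$, contradiction. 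Your argument is direct and avoids the metric estimate entirely: for a fixed $y \in C(x)$ you decompose $y - x_n = \rho_n w_n$ with $\rho_n \to \rho < \de$ and $w_n \cdot v_n \to Rv\cdot v > \cos(\kappa/2)$, so the strict inequalities defining the cone hold eventually, giving $y \in C_n \subset U$ directly. This is arguably cleaner: the paper's lower bound uses $\theta = \arccos(Rv\cdot v)$, but the relevant angle at stage $n$ is $\theta_n = \arccos(Rv^n\cdot v^n)$, which only tends to $\theta$; the inequality as written needs a slight patch (e.g.\ a factor $1/2$ for large $n$), whereas your strict-inequality argument sidesteps the issue by never writing a quantitative bound at all. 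Both proofs rely on the same two facts — openness of the cone (strict inequalities) and convergence of $(x_n, v_n)$ — but you use them to put $y$ inside some $C_n$ exactly, rather than to bound its distance from $U^c$.
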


\begin{proof}
Fix a point $x$ on the boundary of $U$. For each natural number $n$, we take a point $\xi^n$ from $B_{1/n} (x) \cap U$. Thanks to the uniform interior cone condition of $U$, we can take an open cone $C(\xi^n)$ of vertex $\xi^n$ contained in $U$ and congruent to $C$. Let $v^n$ be the axis direction of $C(\xi^n)$. Since the unit sphere $S^{m-1}$ is compact in $\R^m$, we may assume that the sequence $\{ v^n \}$ converges to a direction $v$. 

Let $C(x)$ be the open cone of vertex $x$ and axis direction $v$ congruent to $C$. We show that $C(x)$ is contained in $U$. Suppose that $C(x)$ is not contained in $U$. We take a point from $C(x) \cap U^c$. The point can be expressed as $x+ \rho Rv$ for some $0< \rho <\de$ and rotation matrix $R$ with $Rv \cdot v > \cos (\kappa /2)$. We remark that the point $\xi^n +\rho R v^n$ is in $C(\xi^n)$. Since $C(\xi^n)$ is contained in $U$ for any $n$, we have
\[
\lvert \( x+\rho Rv\) -\( \xi^n +\rho R v^n \) \rvert 
\geq \dist \( \xi^n +\rho R v^n, U^c \) 
\geq \min \left\{ \de-\rho ,\ \rho \sin \( \frac{\kappa}{2} -\theta \) \right\} ,
\]
where $\theta = \arccos ( Rv \cdot v )$. On the other hand, for any large enough $n$, 
\[
\lvert \( x+\rho Rv\) -\( \xi^n +\rho R v^n \) \rvert  
< \frac{1}{2} \min \left\{ \de-\rho ,\ \rho \sin \( \frac{\kappa}{2} -\theta \) \right\} ,
\]
which is a contradiction.
\end{proof}

\begin{rem}
{\rm Let $\Ome$ be a body (the closure of a bounded open set) in $\R^m$. Regarding a half space as a cone of aperture angle $\pi$ and height $+\infty$, $\Ome$ is convex if and only if the complement of $\Ome$ satisfies the uniform boundary inner cone condition of aperture angle $\pi$ and height $+\infty$.}
\end{rem}

\begin{rem}\label{rem_cone}
{\rm In Lemma \ref{cone_condition}, for an open set $U$, we showed that the uniform interior cone condition implies the uniform boundary inner cone condition. We remark that the converse statement does not always hold. For example, let us consider an open unit disc and remove a cusp from the disc near the center. Let $U$ be such an open set. Then, $U$ satisfies the uniform boundary inner cone condition but not the uniform interior cone condition. 

The author would like to express his gratitude to Professor Hiroaki Aikawa for informing him of this example.}
\end{rem}

\begin{prob}
Let $U$ be the interior of a body in $\R^m$. Does the uniform boundary inner cone condition of $U$ imply the uniform interior cone condition of $U$?
\end{prob}
\subsection{Renormalization of the Riesz potential}
Let $\Ome$ be a body (the closure of a bounded open set) in $\R^m$. We consider the {\it Riesz potential} of $\Ome$ of order $0<\al <m$,
\begin{equation}
V_\Ome^{(\al )} (x) = \int_\Ome \lvert x- \xi \rvert^{\al -m} d\xi ,\ x \in \R^m .
\end{equation}
We remark that if $\al \leq 0$, then the above integral diverges for any interior point $x$ of $\Ome$. In \cite{O3}, O'Hara extended the potential $V_\Ome^{(\al )}$ to the case of $\al \leq 0$ by using the same renormalizing process as in the definition of his energy of knots introduced in \cite{O1, O2}. Precisely, for $\al \leq 0$ and $x \in \c{\Ome}$, define the {\it renormalization} of the Riesz potential
\begin{equation}
V_\Ome^{(\al )}(x)=
\begin{cases}
\ds \lim_{\ep \to 0^+} \( \int_{\Ome \sm B_\ep (x)} \lvert x-\xi \rvert^{\al-m}d\xi -\frac{\sigma \( S^{m-1}\)}{-\al} \ep^\al \) &(\al <0) ,\\
\ds \lim_{\ep \to 0^+} \( \int_{\Ome \sm B_\ep (x)} \lvert x-\xi \rvert^{-m}d\xi -\sigma \( S^{m-1}\) \log \frac{1}{\ep} \) &(\al =0) ,
\end{cases}
\end{equation}
and we call it the {\it $r^{\al -m}$-potential} of order $\al$ in what follows. Here, for $\al \leq 0$ and $x \in \Ome^c$, we define the potential $V_\Ome^{(\al)}(x)$ as the usual Riesz potential, that is,
\begin{equation}\label{complementV}
V_\Ome^{(\al )} (x) = \int_\Ome \lvert x- \xi \rvert^{\al -m} d\xi ,\ \al \leq 0,\ x \in \Ome^c.
\end{equation}

Let us prepare some terminologies and properties of $V_\Ome^{(\al )}$ from \cite{O3}.

\begin{prop}[{\cite[Proposition 2.5]{O3}}]\label{without_limit}
Let $\Ome$ be a body in $\R^m$. For $\al \leq 0$ and $x \in \c{\Ome}$, we have
\[
V_\Ome^{(\al )}(x)=
\begin{cases}
\ds \int_{\Ome \sm B_\ep (x)} \lvert x-\xi \rvert^{\al-m}d\xi -\frac{\sigma \( S^{m-1}\)}{-\al} \ep^\al  &(\al <0) ,\\
\ds \int_{\Ome \sm B_\ep (x)} \lvert x-\xi \rvert^{-m}d\xi -\sigma \( S^{m-1}\) \log \frac{1}{\ep}  &(\al =0) 
\end{cases}
\]
whenever $\ep < \dist (x ,\Ome^c)$. In particular, for $\al <0$ and $x \in \c{\Ome}$, we have
\[
V_\Ome^{(\al )}(x) =- \int_{\Ome^c} \lvert x -\xi \rvert^{\al -m}d\xi .
\]
\end{prop}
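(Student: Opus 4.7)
The plan is to show that, for fixed $x \in \c{\Ome}$, the $\epsilon$-dependent expression inside the limit is actually \emph{constant} in $\epsilon$ on the whole interval $(0, \dist(x, \Ome^c))$. Once this is established, the limit as $\epsilon \to 0^+$ is automatically equal to the common value, which proves the first two formulas for every admissible $\epsilon$.

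First I would fix $x \in \c{\Ome}$, set $\delta = \dist(x, \Ome^c)$, and pick any $0 < \epsilon_1 < \epsilon_2 < \delta$. Since $B_{\epsilon_2}(x) \subset \Ome$, the inclusion-exclusion
\[
\int_{\Ome \sm B_{\epsilon_1}(x)} \lvert x-\xi \rvert^{\al -m} d\xi = \int_{\Ome \sm B_{\epsilon_2}(x)} \lvert x-\xi \rvert^{\al -m} d\xi + \int_{B_{\epsilon_2}(x) \sm B_{\epsilon_1}(x)} \lvert x-\xi \rvert^{\al -m} d\xi
\]
holds, and the annular integral can be evaluated by polar coordinates. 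For $\al < 0$ it equals $\sigma(S^{m-1}) \int_{\epsilon_1}^{\epsilon_2} r^{\al -1} dr = \frac{\sigma(S^{m-1})}{-\al}(\epsilon_1^\al - \epsilon_2^\al)$, and rearranging shows that
\[
\int_{\Ome \sm B_{\epsilon}(x)} \lvert x-\xi \rvert^{\al -m} d\xi - \frac{\sigma(S^{m-1})}{-\al} \epsilon^\al
\]
takes the same value at $\epsilon = \epsilon_1$ and $\epsilon = \epsilon_2$. The case $\al = 0$ is identical, except that the polar integral produces $\sigma(S^{m-1})(\log \epsilon_2 - \log \epsilon_1)$, which matches the logarithmic renormalization.

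For the final identity with $\al < 0$, I would use $B_{\epsilon}(x) \cap \Ome^c = \emptyset$ to split
\[
\int_{\Ome \sm B_\epsilon(x)} \lvert x-\xi \rvert^{\al -m} d\xi = \int_{\R^m \sm B_\epsilon(x)} \lvert x-\xi \rvert^{\al -m} d\xi - \int_{\Ome^c} \lvert x-\xi \rvert^{\al -m} d\xi .
\]
The first integral on the right is, by polar coordinates, exactly $\sigma(S^{m-1}) \epsilon^\al / (-\al)$, which cancels the renormalization term. The integral over $\Ome^c$ is finite: the integrand is bounded by $\delta^{\al - m}$ on $\Ome^c$ near $x$, and decays like $|\xi|^{\al - m}$ at infinity with $\al - m < -m$, so integrability holds because $\Ome$ is bounded.

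There is essentially no hard step: the proof amounts to one polar-coordinate calculation plus the observation that $\epsilon$-independence of the expression trivializes the limit. The only points requiring care are keeping track of signs (since $\al \le 0$) and treating the logarithmic singularity at $\al = 0$ separately from the power-type singularity at $\al < 0$.
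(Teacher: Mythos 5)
Your proposal is correct and takes essentially the same approach as the paper: both decompose the domain via the annulus $B_{\ep_2}(x)\sm B_{\ep_1}(x)$, evaluate the annular integral in polar coordinates, and deduce that the renormalized expression is independent of the radius, which trivializes the limit. You additionally spell out the verification of the final identity $V_\Ome^{(\al)}(x) = -\int_{\Ome^c}\lvert x-\xi\rvert^{\al-m}d\xi$, a step the paper's proof leaves implicit.
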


Since this statement will play an important role in this paper, we review its proof (see also \cite[Lemma 2.4]{O3}).

\begin{proof}
We show the statement in the case of $\al <0$. Replacing the renormalization term, the proof in the case of $\al=0$ goes parallel.

Fix an arbitrary interior point $x$ of $\Ome$. Let $0<\de < \ep < \dist (x,\Ome^c)$. Since we have 
\[
\Ome \sm B_\ep (x) =(\Ome \sm B_\de(x)) \sm ( B_\ep (x) \sm B_\de (x)) ,
\] we get
\begin{align*}
\int_{\Ome \sm B_\ep (x)} \lvert x-\xi \rvert^{\al-m} d\xi 
&=\( \int_{\Ome \sm B_\de (x)} \lvert x-\xi \rvert^{\al -m} d\xi -\frac{\sigma \( S^{m-1}\)}{-\al} \de^\al \) \\
&\quad - \( \int_{B_\ep (x) \sm B_\de (x)} \lvert x- \xi \rvert^{\al-m} d\xi -\frac{\sigma \( S^{m-1}\)}{-\al} \de^\al \) .
\end{align*}
Since the left hand side is independent of $\de$, taking the limit $\de \to 0^+$, we obtain
\[
\int_{\Ome \sm B_\ep (x)} \lvert x-\xi \rvert^{\al-m} d\xi  
= V_\Ome^{(\al)} (x) + \frac{\sigma \( S^{m-1}\)}{-\al} \ep^\al ,
\]
which completes the proof.
\end{proof}

Proposition \ref{without_limit} and the definition \eqref{complementV} guarantee the continuity of $V_\Ome^{(\al)}$.

\begin{prop}[{\cite[Proposition 2.12]{O3}}]\label{cV}
Let $\Ome$ be a body in $\R^m$. For $\al \leq 0$, the restrictions of $V_\Ome^{(\al)}$ to the interior and the complement of $\Ome$ are continuous.
\end{prop}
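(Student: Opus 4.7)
\textbf{Proof plan for Proposition \ref{cV}.}

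The plan is to treat the two regions separately, since on $\Omega^c$ the potential is a standard parameter integral with no singularity, while on $\mathring{\Omega}$ the renormalization has to be handled. On the complement, fix $x_0 \in \Omega^c$. Because $\Omega^c$ is open, there is a neighborhood $U$ of $x_0$ inside $\Omega^c$ and a constant $d>0$ with $\dist(U,\Omega)\geq d$. For $x\in U$ and $\xi\in\Omega$ we have $|x-\xi|^{\alpha-m}\leq d^{\alpha-m}$, a uniform dominating function on the bounded set $\Omega$. Dominated convergence (using the continuity of $x\mapsto|x-\xi|^{\alpha-m}$ for each fixed $\xi\in\Omega$) then yields continuity of $V_\Omega^{(\alpha)}$ at $x_0$.

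On the interior, fix $x_0\in\mathring{\Omega}$ and choose $\epsilon>0$ small enough that $2\epsilon<\dist(x_0,\Omega^c)$. For $x$ in the open ball $B_{\epsilon/2}(x_0)$ one still has $\epsilon<\dist(x,\Omega^c)$, so Proposition \ref{without_limit} applies with the same $\epsilon$ simultaneously for all such $x$. The renormalization term $\sigma(S^{m-1})\epsilon^\alpha/(-\alpha)$ (or $\sigma(S^{m-1})\log(1/\epsilon)$ when $\alpha=0$) is independent of $x$, so it suffices to show that
\begin{equation*}
f(x):=\int_{\Omega\setminus B_\epsilon(x)}|x-\xi|^{\alpha-m}\,d\xi
\end{equation*}
is continuous in $x\in B_{\epsilon/2}(x_0)$.

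To disentangle the $x$-dependence of the domain from that of the integrand, substitute $\xi=y+x$ to rewrite
\begin{equation*}
f(x)=\int_{\R^m}\chi_\Omega(y+x)\,\chi_{\{|y|\geq\epsilon\}}(y)\,|y|^{\alpha-m}\,dy .
\end{equation*}
Now the only $x$-dependence sits inside the characteristic function $\chi_\Omega(\cdot+x)$, and the remaining factor $\chi_{\{|y|\geq\epsilon\}}(y)|y|^{\alpha-m}$ is bounded by $\epsilon^{\alpha-m}$ on $\{|y|\geq\epsilon\}$ and supported in a large ball (since $\Omega$ is bounded). Therefore
\begin{equation*}
|f(x)-f(x_0)|\leq \epsilon^{\alpha-m}\int_{\R^m}\bigl|\chi_\Omega(y+x)-\chi_\Omega(y+x_0)\bigr|\,dy=\epsilon^{\alpha-m}\bigl|(\Omega-x)\triangle(\Omega-x_0)\bigr|,
\end{equation*}
and the right-hand side tends to $0$ as $x\to x_0$ by the standard continuity of translation in $L^1(\R^m)$ applied to $\chi_\Omega$. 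The $\alpha=0$ case is identical once the logarithmic renormalizer is used in place of $\epsilon^\alpha/(-\alpha)$.

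The conceptually delicate point is the simultaneous dependence of $f(x)$ on $x$ through both the moving exclusion ball $B_\epsilon(x)$ and the kernel $|x-\xi|^{\alpha-m}$; the translation substitution absorbs one of the two into the characteristic function and leaves a bounded, integrable kernel, after which $L^1$-continuity of translation closes the argument. The uniform choice of $\epsilon$ on $B_{\epsilon/2}(x_0)$, guaranteed by Proposition \ref{without_limit}, is what makes this packaging legitimate.
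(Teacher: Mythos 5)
Your proof is correct and follows the route the paper indicates just before the proposition: use Proposition \ref{without_limit} to fix a uniform exclusion radius $\epsilon$ valid on a ball around $x_0$ (so the renormalization term is constant), and then show the remaining truncated integral is continuous; on the complement, use definition \eqref{complementV} with a uniform distance bound. Your translation substitution $\xi=y+x$ combined with $L^1$-continuity of translation is a clean and rigorous way to close the interior case, handling the simultaneous motion of the domain $\Omega\setminus B_\epsilon(x)$ and the kernel $|x-\xi|^{\alpha-m}$ that the paper leaves implicit.
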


Assuming the uniform boundary inner cone condition and using Proposition \ref{without_limit}, we can understand the behavior of the potential $V_\Ome^{(\al)}$ near the boundary of $\Ome$.

\begin{lem}[{\cite[Lemma 2.13]{O3}}]\label{boundaryV}
Let $\Ome$ be a body in $\R^m$, and $\al \leq 0$. 
\begin{enumerate}
\item[$(1)$] If the complement of $\Ome$ satisfies the uniform boundary inner cone condition, then the potential $V_\Ome^{(\al)}(x)$ diverges to $-\infty$ uniformly as $x \in \c{\Ome}$ approaches to any boundary point of $\Ome$.
\item[$(2)$] If the interior of $\Ome$ satisfies the uniform boundary inner cone condition, then the potential $V_\Ome^{(\al)}(x)$ diverges to $+\infty$ uniformly as $x \in \Ome^c$ approaches to any boundary point of $\Ome$.
\end{enumerate}
\end{lem}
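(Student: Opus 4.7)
The plan is to reduce both parts to the assertion that, for $\al \leq 0$, the integral of $\lvert x-\xi \rvert^{\al -m}$ over a fixed-size open cone with vertex $x_0\in \pd \Ome$ tends to $+\infty$ as $x\to x_0$ at a rate depending only on the aperture, the height, and the distance $\dist (x,\pd \Ome)$. The uniform boundary inner cone condition will then supply such a cone at every boundary point with parameters independent of $x_0$, which is precisely what the word ``uniformly'' in the statement requires.

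For part $(1)$, the first task is to express $V_\Ome^{(\al )}(x)$, for $x\in \c{\Ome}$, as a bounded quantity minus an integral of $\lvert x-\xi \rvert^{\al -m}$ over $\Ome^c$. When $\al <0$ this is immediate from Proposition \ref{without_limit}: $V_\Ome^{(\al )}(x)=-\int_{\Ome^c}\lvert x-\xi \rvert^{\al -m}d\xi$. When $\al =0$ the logarithmic renormalization blocks the same trick, so I would fix any $\rho >0$, take $0<\ep <\min \{\rho ,\dist (x,\Ome^c)\}$, split $\Ome \sm B_\ep (x)$ into the parts inside and outside $B_\rho (x)$, and evaluate the annular integral $\int_{B_\rho (x)\sm B_\ep (x)}\lvert x-\xi \rvert^{-m}d\xi =\sigma (S^{m-1})\log (\rho /\ep )$ in polar coordinates. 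After cancellation of $\log (1/\ep )$ against the renormalizer this gives
\[
V_\Ome^{(0)}(x)=\sigma (S^{m-1})\log \rho -\int_{B_\rho (x)\cap \Ome^c}\lvert x-\xi \rvert^{-m}d\xi +\int_{\Ome \sm B_\rho (x)}\lvert x-\xi \rvert^{-m}d\xi ,
\]
in which the last term is bounded by $\rho^{-m}\Vol (\Ome )$. In either case only the integral over the complement matters.

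Next I would pick a nearest boundary point $x_0$ of $x$, set $d=\dist (x,\pd \Ome)$, and use the uniform boundary inner cone condition on $\Ome^c$ to produce an open cone $C(x_0)\subset \Ome^c$ of vertex $x_0$ with fixed aperture $\kappa$ and height $\de$. Since $\al -m<0$, the triangle inequality $\lvert x-\xi \rvert \leq d+\lvert \xi -x_0 \rvert$ yields $\lvert x-\xi \rvert^{\al -m}\geq (d+\lvert \xi -x_0\rvert )^{\al -m}$, so in polar coordinates around $x_0$, with $\de '=\min \{\de ,\rho \}$,
\[
\int_{C(x_0)\cap B_{\de '}(x_0)}\lvert x-\xi \rvert^{\al -m}d\xi \geq c(\kappa )\int_0^{\de '}(d+r)^{\al -m}r^{m-1}dr =c(\kappa )d^\al \int_0^{\de '/d}(1+u)^{\al -m}u^{m-1}du.
\]
For $\al <0$ the rescaled integral converges to a finite positive constant as $d\to 0^+$ while $d^\al \to +\infty$, and for $\al =0$ the rescaled integral itself grows like $\log (\de '/d)$. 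Either way the lower bound tends to $+\infty$ at a rate depending only on $\kappa ,\de ,\rho$, independent of $x_0$, which combined with the reductions above completes part $(1)$.

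Part $(2)$ follows by the same scheme with $\Ome$ and $\c{\Ome}$ exchanged: for $x\in \Ome^c$ no renormalization is needed, so $V_\Ome^{(\al )}(x)=\int_\Ome \lvert x-\xi \rvert^{\al -m}d\xi$, and the uniform boundary inner cone condition on $\c{\Ome}$ gives a cone in $\c{\Ome}\subset \Ome$ at the nearest boundary point of $x$, on which the same estimate forces $V_\Ome^{(\al )}(x)\to +\infty$. The only real obstacle I expect is the $\al =0$ case of part $(1)$, where the logarithmic renormalization forbids the direct formula of Proposition \ref{without_limit} and forces the auxiliary cutoff $\rho$; once this bookkeeping is in place, the cone computation is identical in all cases.
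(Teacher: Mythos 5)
Your proposal is correct and takes essentially the same route as the paper: reduce to $-\int_{\Ome^c}$ via Proposition~\ref{without_limit} (for $\al<0$), handle $\al=0$ by inserting an annular log-term to cancel the renormalizer (you use a general cutoff $\rho$ where the paper uses $\diam\Ome$, which makes the tail integral vanish rather than merely stay bounded), and then estimate the cone integral at the nearest boundary vertex by the triangle inequality and polar coordinates. Your explicit rescaling $d^{\al}\int_0^{\de'/d}(1+u)^{\al-m}u^{m-1}\,du$ makes the divergence rate and its uniformity slightly more visible than the paper's one-line $\int_0^\de(\rho+\ep)^{\al-m}\rho^{m-1}\,d\rho$, but the argument is the same.
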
 

This Lemma will play an important role in section 3. Let us review its proof.

\begin{proof}
We show the first assertion. The proof of the second assertion goes parallel.

Suppose that the complement of $\Ome$ satisfies the uniform boundary inner cone condition for an open cone $C$ of vertex $0$, aperture angle $\kappa$ and height $\de$. Fix an arbitrary point $z$ on the boundary of $\Ome$. We can take an open cone $C(z)$ of vertex $z$ contained in the complement of $\Ome$ and congruent to $C$. 

We first show the statement in the case of $\al <0$. Let $\ep$ be a positive constant, and take a point $x \in \c{\Ome} \cap B_\ep (z)$. By Proposition \ref{without_limit}, we can estimate the potential $V_\Ome^{(\al)}$ as
\[
-V_\Ome^{(\al)}(x)
\geq \int_{C(z)} \lvert x-\xi \rvert^{\al-m}d\xi
\geq \frac{\sigma\( C \cap \de S^{m-1}\)}{\de^{m-1}} \int_0^\de \( \rho +\ep \)^{\al-m} \rho^{m-1} d\rho
\]
which diverges to $+\infty$ as $\ep$ tends to zero.

Next, we consider the case of $\al=0$. By proposition \ref{without_limit}, for any interior point $x$ of $\Ome$ and positive constant $\ep < \dist (x ,\Ome^c)$, we have
\begin{align*}
&\int_{\Ome \sm B_\ep (x)} \lvert x-\xi \rvert^{-m} d\xi - \sigma \( S^{m-1} \) \log \frac{1}{\ep} \\
&=\int_{\Ome \sm B_\ep(x)} \lvert x-\xi \rvert^{-m} d\xi -\int_{B_{\diam \Ome}(x) \sm B_\ep (x)} \lvert x- \xi \rvert^{-m} d\xi + \sigma \( S^{m-1} \) \log \diam \Ome \\
&=\sigma \( S^{m-1} \) \log \diam \Ome -\int_{B_{\diam \Ome}(x) \sm \Ome} \lvert x- \xi \rvert^{-m} d\xi .
\end{align*}
Thus, in the same argument as in the case of $\al <0$, we obtain the conclusion.
\end{proof}

Thanks to Proposition \ref{cV} and Lemma \ref{boundaryV}, for $\al \leq 0$, the restriction of  $V_\Ome^{(\al)}$ to the interior of $\Ome$ has a maximizer.

\begin{thm}[{\cite[Theorem 3.5]{O3}}]\label{existenceV}
Let $\Ome$ be a body in $\R^m$. Suppose that the complement of $\Ome$ satisfies the uniform boundary inner cone condition. For $\al \leq 0$, the restriction of $V_\Ome^{(\al)}$ to the interior of $\Ome$ has a maximizer.
\end{thm}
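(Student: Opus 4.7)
The plan is a standard Weierstrass-type argument: exhibit a compact subset $K \subset \c{\Ome}$ on which $V_\Ome^{(\al)}$ attains a maximum, and then show that this maximum dominates the values of $V_\Ome^{(\al)}$ on $\c{\Ome} \setminus K$. The two inputs supplied by the preceding material are exactly what makes this work: Proposition \ref{cV} ensures continuity of $V_\Ome^{(\al)}$ on the open set $\c{\Ome}$, and Lemma \ref{boundaryV}(1), whose hypothesis is precisely the uniform boundary inner cone condition on $\Ome^c$, provides $V_\Ome^{(\al)}(x) \to -\infty$ uniformly as $x \in \c{\Ome}$ approaches $\pd \Ome$.

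First I would fix any reference point $x_0 \in \c{\Ome}$ and set $M_0 := V_\Ome^{(\al)}(x_0)$. Applying Lemma \ref{boundaryV}(1) with threshold $M_0$, I can choose some $\de > 0$, shrunk if necessary so that $\de < \dist(x_0, \Ome^c)$, such that $V_\Ome^{(\al)}(x) < M_0$ whenever $x \in \c{\Ome}$ satisfies $\dist(x, \Ome^c) < \de$. The inner-parallel body $K := \Ome \sim \de B^m$ is then closed and bounded, hence compact, and the constraint $\de < \dist(x_0, \Ome^c)$ ensures $x_0 \in K \subset \c{\Ome}$. By Proposition \ref{cV}, $V_\Ome^{(\al)}|_K$ is continuous on the compact set $K$, so it attains its supremum at some $x^* \in K$. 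For $x \in \c{\Ome} \setminus K$ one has $\dist(x, \Ome^c) < \de$, hence $V_\Ome^{(\al)}(x) < M_0 = V_\Ome^{(\al)}(x_0) \leq V_\Ome^{(\al)}(x^*)$, and $x^*$ is therefore a maximizer of the restriction of $V_\Ome^{(\al)}$ to $\c{\Ome}$.

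There is no real obstacle here, since all the nontrivial work has already been carried out in Lemma \ref{boundaryV}, where the cone condition was converted into the uniform boundary divergence. The only point requiring care is that the base value $M_0 = V_\Ome^{(\al)}(x_0)$ must actually be realized inside the compact set $K$, which is why one selects $\de < \dist(x_0, \Ome^c)$ after having fixed $x_0$; reversing the order would risk excluding $x_0$ from $K$ and breaking the final comparison.
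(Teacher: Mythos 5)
Your proof is correct and takes essentially the same route as the paper, which simply cites Proposition \ref{cV} and Lemma \ref{boundaryV} and asserts the conclusion; you have merely spelled out the implicit Weierstrass compactness argument, including the (correct and necessary) care in choosing $\delta < \dist(x_0,\Ome^c)$ so that $x_0$ lands in the compact inner-parallel body.
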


\begin{defi}[{\cite[Definition 3.1]{O3}}]
{\rm Let $\Ome$ be a body in $\R^m$. An interior point $c$ of $\Ome$ is called an {\it $r^{\al -m}$-center} of $\Ome$ if it gives the maximum value of the restriction of $V_\Ome^{(\al )}$ to the interior of $\Ome$. Let us denote the set of $r^{\al -m}$-centers by $\mathcal{V}_\Ome (\al )$, that is, 
\[
\mathcal{V}_\Ome (\al ) = \left\{ c \in \c{\Ome} \lvert V_\Ome^{(\al )} (c)=\max_{x \in \c{\Ome}} V_\Ome^{(\al)} (x) \right\} \right. .
\]
}
\end{defi}

\begin{rem}\label{Mos}
{\rm The name of a maximizer of $V_\Ome^{(\al)}$, $r^{\al-m}$-center, is originated in \cite{M1}. Moszy\'{n}ska defined a {\it radial center} of a star body $A$ as a maximizer of a function of the form
\[
\Phi_A (x) = \int_{S^{m-1}} \f \( \rho_{A -x} (v) \) d\sigma (v),\ 
x \in \Ker A := \left\{ \xi \in A \lvert \forall \eta \in A,\ \overline{\xi \eta} \subset A \right\} \right. ,
\]
where $\rho_{A-x}(v) = \max \{ \la \geq 0 \vert x +\la v \in A \}$ is the {\it radial function} of $A$ with respect to $x$, and $\overline{\xi \eta}$ denotes the line segment from $\xi$ to $\eta$. If $\f (r)=r^\al /\al$ ($0<\al <m$), then the function $\Phi_A$ coincides the Riesz potential $V_A^{(\al )}$.

Her motivation comes from the study on the {\it intersection body} of a star body. Intersection bodies were introduced by Lutwak in \cite{L} to given an affirmative answer to Busemann and Petty's problem \cite{BP}. The intersection body of a star body $A$ is defined by the radial function as $\rho_{IA} = \Vol_{m-1} ( A \cap v^\perp)$. Thus, the definition depends on the position of the origin. In \cite{M1}, Moszy\'{n}ska looked for an optimal position of the origin (see also \cite[Part III]{M2}). 

We refer to \cite{HMP} for the physical meaning of the study on centers of a body. The uniqueness of a radial center was discussed in \cite{H1, M1} but the investigation in \cite{H1} has an error, and it was pointed out in \cite{O3, Sak2}.
}
\end{rem}

Using the form of the potential $V_\Ome^{(\al)}$ in Remark \ref{Mos}, we can show the concavity of $V_\Ome^{(\al)}$ if $\Ome$ is convex.

\begin{thm}[{\cite[Theorem 3.12]{O3}}]\label{uniquenessV}
Let $\Ome$ be a convex body in $\R^m$. For $\al \leq 1$, the potential $V_\Ome^{(\al)}$ is strictly concave on $\Ome$. In particular, $\Ome$ has a unique $r^{\al -m}$-center.
\end{thm}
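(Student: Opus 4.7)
The plan is to use the radial-function representation of $V_\Ome^{(\al)}$ pointed out in Remark \ref{Mos}, combine the concavity of $x \mapsto \rho_{\Ome-x}(v)$ coming from convexity of $\Ome$ with the concavity of $r \mapsto r^\al/\al$ (or $r \mapsto \log r$ when $\al=0$), and integrate over $S^{m-1}$. The main technical points are to verify that the renormalization terms in the definition of $V_\Ome^{(\al)}$ cancel cleanly in polar coordinates, and then to upgrade concavity to strict concavity.

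First I would pass to polar coordinates centered at $x \in \c{\Ome}$. For $\al>0$ this gives immediately
\[
V_\Ome^{(\al)}(x) = \frac{1}{\al}\int_{S^{m-1}} \rho_{\Ome-x}(v)^\al \, d\sigma(v).
\]
For $\al<0$, the polar expression for $\int_{\Ome\sm B_\ep(x)}\lvert x-\xi\rvert^{\al-m}d\xi$ equals $\al^{-1}\int_{S^{m-1}}\bigl[\rho_{\Ome-x}(v)^\al-\ep^\al\bigr]\,d\sigma(v)$, and the $\ep^\al$-piece cancels the subtraction $\sigma(S^{m-1})\ep^\al/(-\al)$ in the definition, so the same formula is recovered after the limit. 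The case $\al=0$ is analogous with $\log\rho_{\Ome-x}(v)$ replacing $\rho_{\Ome-x}(v)^\al/\al$; the divergent $\log(1/\ep)$ subtraction cancels the $-\log\ep$ produced by the inner radial integral.

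Next I would establish that $x\mapsto \rho_{\Ome-x}(v)$ is concave on $\c{\Ome}$ for each fixed $v$. Given $x,y\in \c{\Ome}$, $t\in(0,1)$, and $c=(1-t)x+ty$, the point
\[
c+\bigl[(1-t)\rho_{\Ome-x}(v)+t\rho_{\Ome-y}(v)\bigr]v=(1-t)\bigl(x+\rho_{\Ome-x}(v)v\bigr)+t\bigl(y+\rho_{\Ome-y}(v)v\bigr)
\]
is a convex combination of two points of $\Ome$, hence lies in $\Ome$, so $\rho_{\Ome-c}(v)\geq (1-t)\rho_{\Ome-x}(v)+t\rho_{\Ome-y}(v)$. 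Setting $\f_\al(r)=r^\al/\al$ for $\al\neq 0$ and $\f_0(r)=\log r$, one has $\f_\al'(r)=r^{\al-1}>0$ and $\f_\al''(r)=(\al-1)r^{\al-2}\leq 0$ for $\al\leq 1$, so $\f_\al$ is strictly increasing and concave. Composing the non-decreasing concave function $\f_\al$ with the concave function $x\mapsto \rho_{\Ome-x}(v)$, and then integrating over $S^{m-1}$, yields concavity of $V_\Ome^{(\al)}$.

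The hard part is strictness. For $\al<1$, $\f_\al$ is \emph{strictly} concave, so whenever $\rho_{\Ome-x}(\cdot)$ and $\rho_{\Ome-y}(\cdot)$ disagree on a set of positive $\sigma$-measure the integrated inequality is strict; this is automatic when $x\neq y$, since the radial function determines the star body. The endpoint $\al=1$ is more delicate because $\f_1$ is linear: strict concavity must then come from strict concavity of $x\mapsto \rho_{\Ome-x}(v)$ for $v$ in a positive-measure set, which reduces to the geometric claim that the chord from $x+\rho_{\Ome-x}(v)v$ to $y+\rho_{\Ome-y}(v)v$ cannot lie in $\pd\Ome$ for almost every $v$, obtained by a dimension count on the $(m-1)$-dimensional boundary. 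Once strict concavity is proved, uniqueness of the $r^{\al-m}$-center is immediate, since a strictly concave function on the convex set $\c{\Ome}$ admits at most one maximizer; existence is supplied by Theorem \ref{existenceV}, as the complement of a convex body satisfies the uniform boundary inner cone condition by the remark following Lemma \ref{cone_condition}.
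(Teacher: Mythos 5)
Your approach is the one the paper points to: right before the theorem the author writes that the representation of Remark~\ref{Mos} is what yields concavity, and the closest analogue actually proved in the paper, Proposition~\ref{concavity}, is built on exactly the same polar decomposition and the concavity of $x\mapsto\rho_{\Ome-x}(v)$. Your verification that the renormalization subtraction cancels the $\ep^\al$ (resp.\ $\log(1/\ep)$) piece of the inner radial integral is correct and is precisely what makes the formula $V_\Ome^{(\al)}(x)=\int_{S^{m-1}}\f_\al(\rho_{\Ome-x}(v))\,d\sigma(v)$ legitimate for $\al\le 0$.

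There is, however, a real soft spot at the endpoint $\al=1$, which you flag but do not close. Your phrase ``dimension count on the $(m-1)$-dimensional boundary'' is not an argument: the union of the chords $[P_x(v),P_y(v)]$ over $v\in S^{m-1}$ is a priori a two-parameter family sitting inside the $(m-1)$-dimensional set $\pd\Ome$, and for $m\ge 3$ this is not a contradiction on dimensional grounds alone. A clean way to finish is via extreme points: if equality held for some $t_0\in(0,1)$, concavity would force $\rho_{\Ome-z_t}(\cdot)$ to be affine in $t$ on all of $[x,y]$; then every chord $[P_x(v),P_y(v)]$ lies on $\pd\Ome$, so $F(1/2,\cdot):S^{m-1}\to\pd\Ome$ (a bijection) sends $v$ to the midpoint of a boundary segment. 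An extreme point of $\Ome$ can only be such a midpoint if the segment degenerates, which forces $v=\pm(y-x)/\lvert y-x\rvert$; hence $\Ome$ has at most two extreme points, impossible for a body in $\R^m$ with $m\ge 2$. (For $m=1$ the claim is false: $V_{[a,b]}^{(1)}\equiv b-a$.) Two further small points: for $\al<1$ your argument in fact establishes only \emph{strict midpoint} concavity, so you should add the standard observation that a concave function with strict midpoint concavity is strictly concave; and Theorem~\ref{existenceV} covers existence only for $\al\le 0$, while for $0<\al\le 1$ existence comes from the continuity and decay of the ordinary Riesz potential rather than from that theorem. With these repairs the proof is complete and, for the range $\al<1$, is actually tighter than the paper's Proposition~\ref{concavity}, whose first strict inequality implicitly presumes strict concavity of $\rho_{\Ome-x}(v)$ in $x$ for a set of $v$ of positive measure --- the same delicate point your $\f_\al$-strict-concavity argument cleverly sidesteps when $\al<1$.
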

\subsection{Properties of the solid angle function {\boldmath $A_\Ome$}}
Let $\Ome$ be the closure of an open set in $\R^m$. We consider the solid angle of $\Ome$ at $(x,h ) \in \R^m \times (0,+\infty )$. From its definition mentioned in the introduction, we can show the following properties:
\begin{align}
\label{total_angle}
A_{\R^m} (x,h) &= \frac{\sigma \( S^m\)}{2},\ x \in \R^m ,\ h>0,\\
\label{local_angle}
\lim_{h\to 0^+} A_\Ome (x,h) &= \frac{\sigma \( S^m\)}{2} \chi_\Ome (x) ,\ x \in \R^m \sm \pd \Ome .
\end{align}

In \cite{Sak1, Sak2}, the author investigated properties of the solid angle function $A_\Ome$. Let us prepare some terminologies and properties of $A_\Ome$ from \cite{Sak1}.

Since the integrand of $A_\Ome$ is strictly decreasing with respect to $\lvert x- \xi \rvert$, for any point $p$ in the complement of the convex hull of $\Ome$, taking a point $p'$ on the boundary of the convex hull of $\Ome$ with $\vert p-p' \vert =\dist (p, (\conv \Ome )^c)$, we obtain $A_\Ome (p,h) < A_\Ome (p',h)$. Hence the continuity of $A_\Ome (\cdot ,h)$ imply the existence of a maximizer of $A_\Ome(\cdot ,h)$ if $\Ome$ is compact.

\begin{prop}[{\cite[Proposition 5.16]{Sak1}}]\label{existenceA}
Let $\Ome$ be a body in $\R^m$. For any $h>0$, the solid angle function $A_\Ome (\cdot ,h)$ has a maximizer, and all of them are contained in the convex hull of $\Ome$.
\end{prop}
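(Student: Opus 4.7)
The plan is a continuity-plus-compactness argument, with the key geometric input being that for any point outside $\conv \Ome$ the metric projection onto $\conv \Ome$ strictly decreases the distance to every point of $\Ome$; since the integrand of $A_\Ome(\cdot,h)$ is strictly decreasing in $\lvert x-\xi \rvert$, this will immediately rule out maximizers outside $\conv \Ome$, while continuity on the compact set $\conv \Ome$ will supply the existence of a maximizer.

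First, I would verify that $x \mapsto A_\Ome(x,h)$ is continuous on $\R^m$. For fixed $h>0$, the kernel $(\lvert x-\xi \rvert^2+h^2)^{-(m+1)/2}$ is uniformly bounded by $h^{-(m+1)}$, and $\Ome$ has finite Lebesgue measure, so dominated convergence handles continuity in $x$. The same kind of estimate, together with the fact that $\Ome$ is bounded, gives $A_\Ome(x,h) \to 0$ as $\lvert x \rvert \to \infty$, which will be used (together with continuity) to guarantee that maximizers exist inside a large ball.

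Next, fix $p \notin \conv \Ome$ and let $p'$ be the metric projection of $p$ onto the closed convex set $\conv \Ome$; then $p' \in \pd \conv \Ome$ and $p \neq p'$. The standard characterization of the projection gives $(p-p')\cdot(\xi -p') \le 0$ for every $\xi \in \conv \Ome$, whence
\[
\lvert p-\xi \rvert^2 \ge \lvert p-p' \rvert^2 + \lvert p'-\xi \rvert^2 > \lvert p'-\xi \rvert^2
\]
for every $\xi \in \Ome \subset \conv \Ome$. Strict monotonicity of the kernel in $\lvert x-\xi \rvert$, applied on the positive-measure set $\Ome$, then yields $A_\Ome(p,h) < A_\Ome(p',h)$. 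Combining this strict inequality with continuity of $A_\Ome(\cdot,h)$ on the compact set $\conv \Ome$, we obtain a maximizer of the restriction there, and the inequality forces it to be a global maximizer and every global maximizer to lie in $\conv \Ome$. I do not anticipate a serious obstacle; the only substantive ingredient is the projection inequality for closed convex sets, which is classical.
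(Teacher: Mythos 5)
Your proposal is correct and follows essentially the same route as the paper's sketch: establish continuity of $A_\Ome(\cdot,h)$, then use the metric projection onto $\conv \Ome$ together with strict monotonicity of the kernel to show any point outside $\conv \Ome$ is strictly beaten by its projection, so the maximum is attained on the compact set $\conv \Ome$. You merely make explicit the standard projection inequality $(p-p')\cdot(\xi-p')\le 0$ that the paper leaves implicit, and the remark about decay at infinity is harmless but unnecessary once the projection argument confines maximizers to $\conv \Ome$.
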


\begin{defi}[{\cite[Definition 5.23]{Sak1}}]
{\rm Let $\Ome$ be a body in $\R^m$. A point $c$ is called an {\it illuminating center} of $\Ome$ of height $h$ if it gives the maximum value of $A_\Ome (\cdot ,h)$. Let us denote the set of illuminating centers by $\mathcal{A}_\Ome (h)$, that is,
\[
\mathcal{A}_\Ome (h) = \left\{ c \in \R^m \lvert A_\Ome (c,h) = \max_{x \in \R^m} A_\Ome (x,h) \right\} \right. .
\]
}
\end{defi}

The derivative of $A_\Ome (\cdot ,h)$ vanishes at a point $x$ if and only if the point $x$ satisfies the equation
\begin{equation}
x= \int_\Ome \( \lvert x-\xi \rvert^2 +h^2 \)^{-(m+3)/2} \xi d\xi \left/ \int_\Ome \( \lvert x-\xi \rvert^2 +h^2 \)^{-(m+3)/2} d\xi \right. ,
\end{equation}
which tells us the limiting point of an illuminating center of height $h$ as $h$ goes to infinity.

\begin{prop}[{\cite[Proposition 5.19]{Sak1}}]
Let $\Ome$ be a body in $\R^m$. The set of illuminating centers converges to the one-point set of the centroid of $\Ome$ as $h$ goes to infinity with respect to the Hausdorff distance.
\end{prop}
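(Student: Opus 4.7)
The strategy is to exploit the critical point equation recalled just before the statement and to pass to the limit along sequences of illuminating centers. Since Proposition \ref{existenceA} guarantees $\mathcal{A}_\Ome(h) \subset \conv \Ome$, which is compact, any sequence $c_\ell \in \mathcal{A}_\Ome(h_\ell)$ with $h_\ell \to \infty$ admits a convergent subsequence. If I can show that every accumulation point of such sequences equals the centroid $G_\Ome := \int_\Ome \xi\, d\xi / \int_\Ome d\xi$, then the usual contradiction argument will yield $\sup_{c \in \mathcal{A}_\Ome(h)} \lvert c - G_\Ome \rvert \to 0$, which is precisely Hausdorff convergence of $\mathcal{A}_\Ome(h)$ to $\{G_\Ome\}$.

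To identify accumulation points, I use that each $c_\ell$, being a maximizer, satisfies
\begin{equation*}
c_\ell = \frac{\int_\Ome \xi\, w_\ell(\xi)\, d\xi}{\int_\Ome w_\ell(\xi)\, d\xi}, \qquad w_\ell(\xi) := \(\lvert c_\ell - \xi \rvert^2 + h_\ell^2 \)^{-(m+3)/2}.
\end{equation*}
The decisive observation is that the rescaled weights
\begin{equation*}
h_\ell^{m+3}\, w_\ell(\xi) = \(1 + \frac{\lvert c_\ell - \xi \rvert^2}{h_\ell^2} \)^{-(m+3)/2}
\end{equation*}
converge to $1$ uniformly in $\xi \in \Ome$, since $\lvert c_\ell - \xi \rvert \leq \diam(\conv \Ome)=\diam \Ome$ is bounded while $h_\ell \to \infty$. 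Multiplying numerator and denominator of the critical point equation by $h_\ell^{m+3}$ and using this uniform convergence, I can pass to the limit along a subsequence $c_\ell \to c_\infty$ to obtain
\begin{equation*}
c_\infty = \frac{\int_\Ome \xi\, d\xi}{\int_\Ome d\xi} = G_\Ome,
\end{equation*}
so the centroid is the only possible accumulation point.

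The only potentially delicate issue is the uniform convergence of the rescaled weights and the consequent stability of the weighted average, but this reduces at once to the uniform boundedness of $\lvert c_\ell - \xi \rvert$ over the compact set $\conv \Ome \times \Ome$; no finer estimate is required. I therefore do not anticipate any real obstacle, the argument being a standard weighted-barycenter stability reasoning combined with the compactness of $\conv \Ome$.
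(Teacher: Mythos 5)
Your argument is correct and is precisely the route the paper itself indicates: the text just before the statement displays the critical point equation and remarks that it ``tells us the limiting point of an illuminating center of height $h$ as $h$ goes to infinity.'' Your rescaling by $h_\ell^{m+3}$, the resulting uniform convergence of $(1+\lvert c_\ell-\xi\rvert^2/h_\ell^2)^{-(m+3)/2}$ to $1$ on the compact set $\conv\Ome$, and the passage to the limit in the weighted barycenter formula combined with compactness of $\conv\Ome$ is the standard way to make that remark rigorous, and no gap is present.
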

The small-height behavior of illuminating centers will be investigated in Theorem \ref{behaviorA1}.
\subsection{Properties of a potential with a radially symmetric kernel}
Let $\Ome$ be a body (the closure of a bounded open set) in $\R^m$. We consider a potential of the form
\begin{equation}
K_\Ome (x)=\int_\Ome k\( \lvert x-\xi \rvert \) d\xi,\ x \in \R^m.
\end{equation}
We understand that the kernel $k$ satisfies the condition $(C_\be^0)$ for a positive $\be$ if $k$ is continuous on the interval $(0,+\infty)$, and if
\begin{equation}
k(r)=
\begin{cases}
O\(r^{\be -m}\) &(\be <m),\\
O\( \log r\) &(\be =m),\\
O(1) &(\be >m) 
\end{cases}
\end{equation}
as $r$ tends to $0^+$.

In \cite{Sak1}, the author investigated properties of the potential $K_\Ome$. Let us prepare some terminologies and properties of $K_\Ome$ from \cite{Sak1}.

Let $\psi$ be a smooth function so that $\psi (r)=0$ if $0\leq r \leq 1$, $0 \leq \psi'(r) \leq 2$ if $1\leq r\leq 2$, and $\psi (r)=1$ if $2 \leq r$. If $k$ satisfies the condition $(C^0_\be)$ for some $\be >0$, then, for each positive $\ep$, the function 
\begin{equation}
\R^m \ni x \mapsto \int_\Ome k\( \lvert x-\xi \rvert \) \psi \( \frac{\lvert x- \xi \rvert}{\ep} \) d\xi \in \R
\end{equation}
is continuous and converges to $K_\Ome$ uniformly on $\R^m$ as $\ep$ tends to $0^+$. Thus, we obtain the continuity of $K_\Ome$.

\begin{prop}[{\cite[Proposition 2.3]{Sak1}}]\label{c0K}
Let $\Ome$ be a body in $\R^m$. If $k$ satisfies the condition $(C_\be^0)$ for some $\be >0$, then the potential $K_\Ome$ is continuous on $\R^m$.
\end{prop}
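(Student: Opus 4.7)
The plan is to exploit the truncation idea explicitly indicated in the paragraph preceding the statement: approximate $K_\Ome$ uniformly by the family of truncated potentials
\[
K_\Ome^\ep (x) = \int_\Ome k\( \lvert x-\xi \rvert \) \psi \( \frac{\lvert x-\xi \rvert}{\ep} \) d\xi,
\]
show each $K_\Ome^\ep$ is continuous on $\R^m$, and then upgrade the uniform convergence $K_\Ome^\ep \to K_\Ome$ to continuity of the limit.

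First I would check that each $K_\Ome^\ep$ is continuous. The integrand $(x,\xi )\mapsto k(\lvert x-\xi \rvert )\psi (\lvert x-\xi \rvert /\ep )$ vanishes when $\lvert x-\xi \rvert \leq \ep$, and $k$ is continuous on $(0,+\infty )$, so the integrand is continuous on $\R^m \times \Ome$. Since $\Ome$ is compact and the integrand is bounded in $\xi$ uniformly when $x$ varies in a compact set (because $\lvert x-\xi \rvert$ stays bounded above and $k\psi (\cdot /\ep )$ is bounded on any interval $[\ep ,R]$), a routine application of the dominated convergence theorem gives continuity of $K_\Ome^\ep$.

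Next I would control $K_\Ome (x) - K_\Ome^\ep (x) = \int_\Ome k(\lvert x-\xi \rvert )(1-\psi (\lvert x-\xi \rvert /\ep ))d\xi$. Because $1-\psi (r/\ep )$ is supported in $r \leq 2\ep$ and bounded by $1$, this integral is absolutely bounded by
\[
\int_{B_{2\ep }(x)} \lvert k(\lvert x-\xi \rvert ) \rvert \, d\xi
= \sigma \( S^{m-1} \) \int_0^{2\ep } \lvert k(r) \rvert r^{m-1} dr,
\]
which is independent of $x$. Under $(C_\be^0)$, a direct case split ($\be <m$: $O(r^{\be -m})$ gives $O(\ep^\be )$; $\be =m$: $O(\log r)$ gives $O(\ep^m \lvert \log \ep \rvert )$; $\be >m$: $O(1)$ gives $O(\ep^m )$) shows this bound tends to $0$ as $\ep \to 0^+$. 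Hence $K_\Ome^\ep \to K_\Ome$ uniformly on $\R^m$, and the uniform limit of continuous functions is continuous.

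I do not anticipate any real obstacle; the only mildly delicate point is handling the three separate asymptotic regimes for $k$ near $0$ in a unified way, but each reduces to a one-dimensional integral in polar coordinates whose convergence is elementary. The essential feature making the argument work is that the bound on $K_\Ome - K_\Ome^\ep$ is $x$-independent, so uniform (rather than merely pointwise) convergence is automatic.
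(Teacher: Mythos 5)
Your proposal is correct and follows essentially the same route the paper sketches (and attributes to \cite[Proposition 2.3]{Sak1}): introduce the truncated potentials $K_\Ome^\ep$ via the cutoff $\psi$, note each $K_\Ome^\ep$ is continuous, and obtain uniform convergence $K_\Ome^\ep \to K_\Ome$ from the $x$-independent polar bound $\sigma (S^{m-1})\int_0^{2\ep}\lvert k(r)\rvert r^{m-1}\,dr$, which tends to $0$ under $(C_\be^0)$. The explicit case split on $\be<m$, $\be=m$, $\be>m$ is exactly the content of the condition $(C_\be^0)$, so no gap remains.
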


In the same manner as Proposition \ref{existenceA}, we can show the existence of a maximizer of $K_\Ome$.

\begin{prop}[{\cite[Proposition 3.2]{Sak1}}]
Let $\Ome$ be a body in $\R^m$. If $k$ is strictly decreasing and satisfies the condition $(C_\be^0)$ for some $\be >0$, then the potential $K_\Ome$ has a maximizer, and all of them are contained in the convex hull of $\Ome$.
\end{prop}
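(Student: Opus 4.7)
The plan is to follow the template used earlier for the solid angle function $A_\Ome$ (cf.\ the discussion preceding Proposition~\ref{existenceA}): continuity of $K_\Ome$ together with strict radial monotonicity of $k$ forces every maximizer to lie in $\conv \Ome$, and compactness of $\conv \Ome$ then delivers existence. First, since $\Ome$ is compact in $\R^m$, so is $\conv \Ome$. By Proposition~\ref{c0K}, the hypothesis $(C_\be^0)$ with $\be > 0$ guarantees that $K_\Ome$ is continuous on all of $\R^m$, so its restriction to $\conv \Ome$ attains a maximum at some point $c \in \conv \Ome$. It remains to show that this is a global maximum and that every global maximizer lies in $\conv \Ome$; both follow from a single comparison.

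The key geometric step compares the value of $K_\Ome$ at a point $x \in \R^m \sm \conv \Ome$ with its value at the metric projection $x' = \pi_{\conv \Ome}(x)$, which is well defined and unique since $\conv \Ome$ is closed and convex. The standard variational characterization $(x - x') \cdot (\xi - x') \leq 0$ for every $\xi \in \conv \Ome$ yields
\begin{equation*}
\lvert x - \xi \rvert^2 = \lvert x - x' \rvert^2 + \lvert x' - \xi \rvert^2 - 2(x - x') \cdot (\xi - x') > \lvert x' - \xi \rvert^2
\end{equation*}
for every $\xi \in \conv \Ome$, since $x \neq x'$. In particular $\lvert x' - \xi \rvert < \lvert x - \xi \rvert$ for every $\xi \in \Ome$, and the strict monotonicity of $k$ then gives $k(\lvert x' - \xi \rvert) > k(\lvert x - \xi \rvert)$ pointwise on $\Ome$. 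Since both $K_\Ome(x)$ and $K_\Ome(x')$ are finite by Proposition~\ref{c0K} and $\Ome$ has positive Lebesgue measure, integration preserves this strict inequality, producing $K_\Ome(x') > K_\Ome(x)$. Because $x' \in \conv \Ome$, this simultaneously rules out any maximizer outside $\conv \Ome$ and promotes the maximum of $K_\Ome$ over $\conv \Ome$ to a global maximum on $\R^m$.

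The only delicate point is the last integration step: one must ensure that the pointwise strict inequality survives despite the possible singularity of $k$ near $0$. This is handled by Proposition~\ref{c0K}, which already furnishes finite values $K_\Ome(x)$ and $K_\Ome(x')$, so the integrand $k(\lvert x' - \xi \rvert) - k(\lvert x - \xi \rvert)$ is integrable and strictly positive on the positive-measure set $\Ome$. I do not foresee any further obstacle.
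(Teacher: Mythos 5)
Your proof is correct and follows essentially the same route the paper takes: the paper proves the statement ``in the same manner as Proposition~\ref{existenceA}'', whose justification is exactly the comparison you give---project a point $x \notin \conv\Ome$ to its nearest point $x'$ on $\conv\Ome$, use strict monotonicity of the kernel to get $K_\Ome(x') > K_\Ome(x)$, and combine this with the continuity supplied by Proposition~\ref{c0K} and the compactness of $\conv\Ome$ to obtain both the existence of a maximizer and its confinement to the convex hull. Your handling of the possible singularity via the finiteness from Proposition~\ref{c0K}, and your observation that $\Ome$ has positive Lebesgue measure so the strict pointwise inequality persists after integration, are the right details to spell out.
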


\begin{defi}[{\cite[Definition 3.3]{Sak1}}]
{\rm Let $\Ome$ be a body in $\R^m$. A point $c$ is called a {\it $k$-center} of $\Ome$ if it gives the maximum value of $K_\Ome$. We denote the set of $k$-centers of $\Ome$ by $\mathcal{K}_\Ome$, that is,
\[
\mathcal{K}_\Ome = \left\{ c \in \R^m \lvert K_\Ome (c) =\max_{x \in \R^m} K_\Ome (x) \right\} \right. .
\]
For the potential $K_\Ome(x,t)$ defined in \eqref{K}, we call a maximizer of $K_\Ome (\cdot ,t)$ a {\it $k$-center at time $t$}.
}
\end{defi}

\begin{prop}\label{concavity}
Let $\Ome$ be a convex body in $\R^m$. Let $\Ome'$ be a convex body contained in the interior of $\Ome$. Put
\[
d(\Ome ,\Ome') = \inf \left\{ \lvert z-w \rvert \lvert z \in \pd \Ome ,\ w \in \Ome' \right\} \right. ,\ 
D(\Ome ,\Ome') = \sup \left\{ \lvert z-w \rvert \lvert z \in \pd \Ome ,\ w \in \Ome' \right\} \right. .
\]
Let $k$ be positive and satisfy the condition $(C^0_\be)$ for some $\be >0$. If $k(r)r^{m-1}$ is decreasing for $r \in [d(\Ome ,\Ome'), D(\Ome ,\Ome') ]$, then $K_\Ome$ is strictly concave in $\Ome'$.
\end{prop}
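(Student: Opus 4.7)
The plan is to introduce polar coordinates centred at the variable point $x$, reduce the problem to the concavity of a single-variable auxiliary function composed with the radial function $\rho_{\Ome-x}$, and exploit both the convexity of $\Ome$ (which makes the radial function concave in $x$) and the hypothesis that $k(r)r^{m-1}$ is decreasing (which makes the auxiliary function concave on the relevant interval). Since $\Ome$ is convex, for $x \in \c{\Ome}$ each ray from $x$ meets $\Ome$ in the segment of length $\rho_{\Ome-x}(v)$, so changing to polar coordinates gives
\[
K_\Ome (x) = \int_{S^{m-1}} F\( \rho_{\Ome -x}(v) \) d\sigma (v),\quad F(r) := \int_0^r k(s) s^{m-1} ds .
\]

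For $x \in \Ome'$, every boundary point of $\Ome$ lies at distance between $d(\Ome ,\Ome')$ and $D(\Ome ,\Ome')$ from $x$, hence $\rho_{\Ome -x}(v) \in [d(\Ome ,\Ome'),D(\Ome ,\Ome')]$; since $k>0$ and $F'(r)=k(r)r^{m-1}$ is (strictly) decreasing on this interval, $F$ is strictly increasing and strictly concave there. Next, for each fixed $v$, the map $x \mapsto \rho_{\Ome -x}(v)$ is concave on $\Ome$: setting $\rho_i =\rho_{\Ome -x_i}(v)$, the points $x_i +\rho_i v$ belong to $\Ome$, and convexity gives $x_t +\((1-t)\rho_0 +t\rho_1 \) v \in \Ome$, whence $\rho_{\Ome -x_t}(v) \geq (1-t)\rho_0 +t\rho_1$. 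Composing the concave nondecreasing $F$ with this concave radial function yields concavity of $x \mapsto F(\rho_{\Ome -x}(v))$ on $\Ome'$ for every $v$, and integrating in $v$ gives concavity of $K_\Ome$ on $\Ome'$.

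To upgrade to strict concavity, I would take distinct $x_0 ,x_1 \in \Ome'$ and examine the direction $v^* =(x_1 -x_0)/\lvert x_1 -x_0 \rvert$: here $\rho_{\Ome -x_t}(v^*) = \rho_{\Ome -x_0}(v^*) -t\lvert x_1 -x_0 \rvert$ is affine and non-constant in $t$, so strict concavity of $F$ forces strict concavity of $t \mapsto F(\rho_{\Ome -x_t}(v^*))$. Continuity of $v \mapsto \rho_{\Ome -x_t}(v)$ transfers this strict inequality to an open neighborhood of $v^*$ on $S^{m-1}$, and integration then yields strict concavity of $K_\Ome$ along $[x_0 ,x_1]$. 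I expect the main obstacle to be this final strictness step: one has to interpret ``decreasing'' strictly (so that $F$ is genuinely strictly concave on $[d(\Ome ,\Ome'),D(\Ome ,\Ome')]$) and verify that the continuity argument produces a set of directions of positive $\sigma$-measure contributing strict inequality, not merely isolated $v$.
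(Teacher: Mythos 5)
Your framework coincides with the paper's: the same polar-coordinate rewriting $K_\Ome(x)=\int_{S^{m-1}}F(\rho_{\Ome-x}(v))\,d\sigma(v)$, the observation that $\rho_{\Ome-x}(v)\in[d(\Ome,\Ome'),D(\Ome,\Ome')]$ for $x\in\Ome'$, and the concavity of $x\mapsto\rho_{\Ome-x}(v)$ from the convexity of $\Ome$. The non-strict concavity step is correct. The paper just writes the midpoint difference $2K_\Ome(\frac{x+y}{2})-K_\Ome(x)-K_\Ome(y)$ directly instead of invoking the composition-of-concave-and-monotone rule, but the bookkeeping is the same.

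The genuine gap is exactly the one you flag at the end: your strictness step requires $F$ to be \emph{strictly} concave on $[d(\Ome,\Ome'),D(\Ome,\Ome')]$, i.e.\ $k(r)r^{m-1}$ strictly decreasing, but the proposition only assumes it decreasing. The paper is explicit elsewhere in writing ``strictly decreasing'' when it means so, so this hypothesis cannot be reinterpreted away, and with a weakly monotone $F'$ your argument genuinely fails. What makes the gap worse is that the direction $v^*=(x_1-x_0)/|x_1-x_0|$ you single out is precisely the direction in which the concavity inequality $\rho_{\Ome-(x+y)/2}(v)\geq(\rho_{\Ome-x}(v)+\rho_{\Ome-y}(v))/2$ holds with \emph{equality} ($\rho$ is affine along $v^*$), so it is the worst direction to test.

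The paper obtains strictness from a different source. Writing the midpoint difference as
\[
\int_{S^{m-1}}\Bigl(2\!\int_0^{\rho_{\Ome-(x+y)/2}(v)}-\int_0^{\rho_{\Ome-x}(v)}-\int_0^{\rho_{\Ome-y}(v)}\Bigr)k(r)r^{m-1}\,dr\,d\sigma(v),
\]
one first replaces $\rho_{\Ome-(x+y)/2}(v)$ by $\frac12(\rho_{\Ome-x}(v)+\rho_{\Ome-y}(v))$; since $k(r)r^{m-1}>0$ this drops the value, and the drop is \emph{strict} because convexity of $\Ome$ forces $\rho_{\Ome-(x+y)/2}(v)>\frac12(\rho_{\Ome-x}(v)+\rho_{\Ome-y}(v))$ on a set of directions of positive measure (this is a genuine geometric fact about bounded convex bodies with $x\neq y$ interior points; it cannot hold with equality for all $v$, e.g.\ because the polar-coordinate volume formula $\Vol(\Ome)=\frac1m\int_{S^{m-1}}\rho_{\Ome-x_t}(v)^m\,d\sigma(v)$ would then be a non-constant convex function of $t$). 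The remaining expression equals $\int_{S^{m-1}}\bigl(\int_{\min}^{\mathrm{avg}}-\int_{\mathrm{avg}}^{\max}\bigr)k(r)r^{m-1}\,dr\,d\sigma(v)$, which is $\geq0$ by the (merely weak) decrease of $k(r)r^{m-1}$. So the strict inequality comes entirely from $k>0$ and the strict concavity of the radial function in a positive-measure set of directions; the decrease of $k(r)r^{m-1}$ is only used non-strictly. Replacing your strictness step with this argument closes the gap under the stated hypotheses.
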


\begin{proof}
We take distinct two points $x$ and $y$ from $\Ome'$. Using the polar coordinate, we have
\begin{align*}
&\quad 2K_\Ome \( \frac{x+y}{2}\) - \( K_\Ome (x) +K_\Ome (y) \) \\
&= \int_{S^{m-1}} \( \( 2\int_0^{\rho_{\Ome -(x+y)/2}(v)} -\int_0^{\rho_{\Ome -x}(v)}-\int_0^{\rho_{\Ome -y}(v)}\) k (r)r^{m-1}dr\)d\sigma (v) \\
&> \int_{S^{m-1}} \( \( 2\int_0^{\( \rho_{\Ome -x}(v)+ \rho_{\Ome -y}(v)\)/2} -\int_0^{\rho_{\Ome -x}(v)}-\int_0^{\rho_{\Ome -y}(v)}\) k (r)r^{m-1}dr\)d\sigma (v) \\
&= \int_{S^{m-1}} \( \( \int_{\min \left\{ \rho_{\Ome-x}(v), \ \rho_{\Ome-y}(v) \right\}}^{\( \rho_{\Ome -x}(v) + \rho_{\Ome -y}(v)\)/2} - \int_{\( \rho_{\Ome -x}(v) + \rho_{\Ome -y}(v)\)/2}^{\max \left\{ \rho_{\Ome-x}(v), \ \rho_{\Ome-y}(v) \right\}}  \) k (r)r^{m-1}dr\)d\sigma (v)\\
&\geq 0.
\end{align*}
Here, the first and second inequalities follow from the convexity of $\Ome$ and the decreasing behavior of $k(r)r^{m-1}$, respectively.
\end{proof}
\subsection{The minimal unfolded region of a body}
Let $\Ome$ be a body (the closure of a bounded open set) in $\R^m$. Using the radial symmetry of the kernels of the potentials mentioned in the previous subsections, we can restrict the region containing those centers. We introduce the restricted region and its properties from \cite{BM, O3, Sak1} (see also \cite{BMS, O4, Sak2}).

\begin{defi}[{\cite[Definition 3.3]{O3}}]\label{uf}
{\rm Let $v$ be a direction in the unit sphere $S^{m-1}$, and $b$ a real parameter. Let $\Refl_{v,b}$ be the reflection of $\R^m$ in the hyperplane $\{ z \in \R^m \vert z \cdot v =b \}$. Put 
\[
\Ome_{v,b}^+ = \Ome \cap \left. \left\{ z \in \R^m \rvert z \cdot v \geq b \right\} ,\ 
l(v)= \min \left\{ a\in \R \lvert \forall b \geq a,\ \Refl_{v,b}\( \Ome_{v,b}^+ \) \subset \Ome  \right\} \right. 
\] 
(see Figure \ref{uffig}). Define the {\it minimal unfolded region} of $\Ome$ by
\[
Uf(\Ome ) = \bigcap_{v \in S^{m-1}} \left. \left\{ z \in \R^m \rvert z \cdot v \leq l(v) \right\} .
\]
\begin{figure}
\centering
\scalebox{0.39}{\includegraphics[clip]{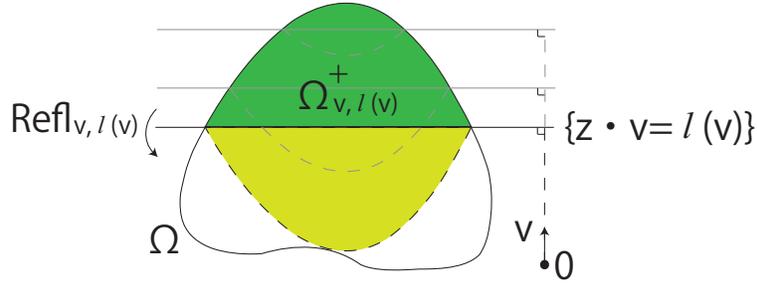}}
\caption{Folding a body $\Ome$ in the manner in Definition \ref{uf}}
\label{uffig}
\end{figure}
}
\end{defi} 

\begin{ex}[{\cite[Lemma 5]{BM}, \cite[Example 3.4]{O3}}]\label{uf_triangle}
{\rm 
\begin{enumerate}
\item[(1)] The minimal unfolded region of the disjoint union of three discs is surrounded by the lines through two centers of discs (see Figure \ref{three_discs}).
\item[(2)] The minimal unfolded region of an acute triangle is surrounded by the mid-perpendiculars of edges and the bisectors of angles (see Figure \ref{acute}).
\item[(3)] The minimal unfolded region of an obtuse triangle is surrounded by the largest edge, its midperpendicular and the bisectors of angles (see Figure\ref{obtuse}). We remark that the minimal unfolded region of $\Ome$ is not always contained in the interior of the convex hull of $\Ome$ even if $\Ome$ is convex.
\end{enumerate}
\begin{figure}[htbp]
\begin{center}
\begin{minipage}[h]{0.3\linewidth}
\centering
\scalebox{0.3}{\includegraphics[clip]{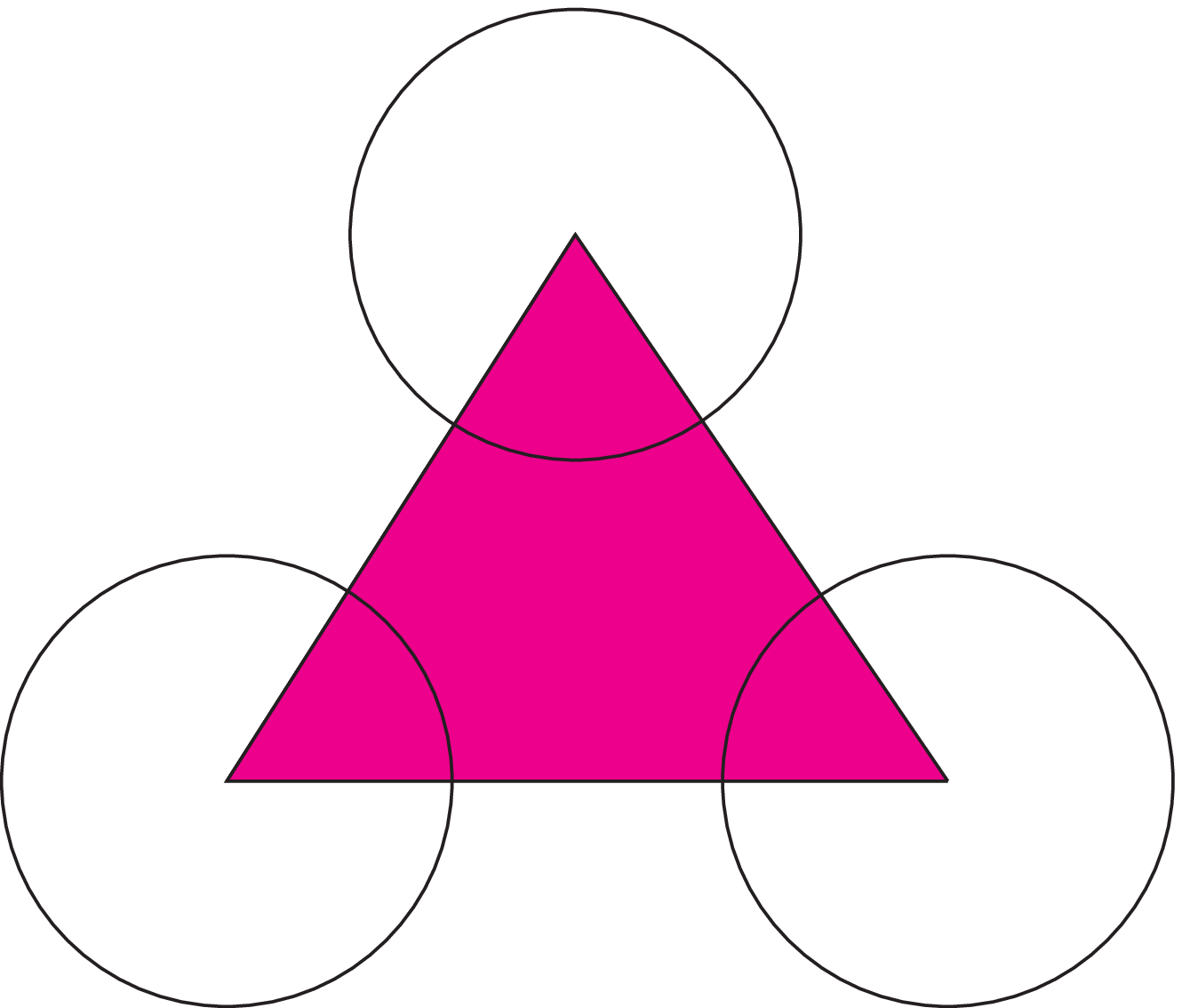}}
\caption{The minimal unfolded region of the disjoint union of three discs}
\label{three_discs}
\end{minipage}
\hspace{0.01\linewidth}
\begin{minipage}[h]{0.3\linewidth}
\centering
\scalebox{0.25}{\includegraphics[clip]{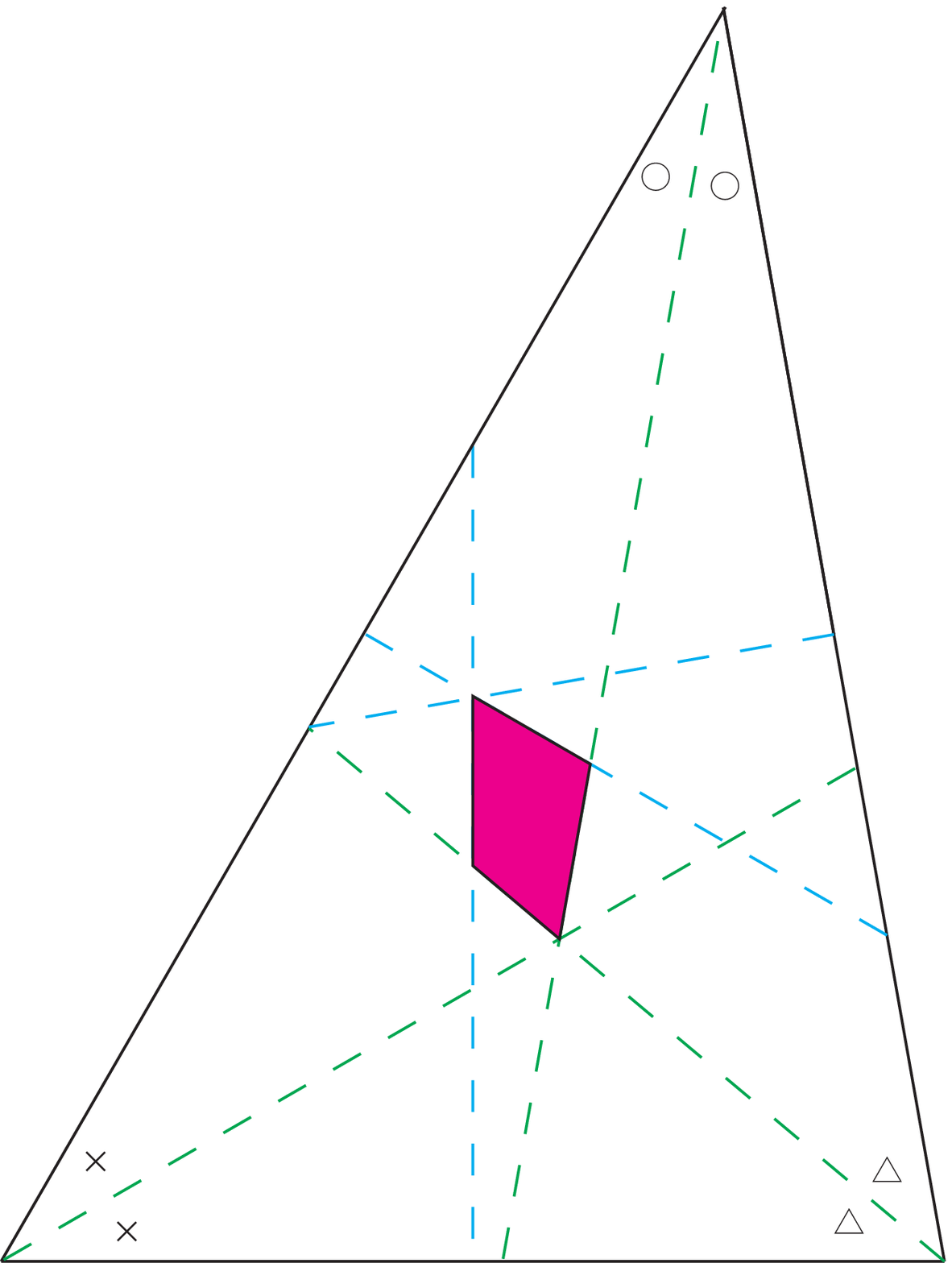}}
\caption{The minimal unfolded region of an acute triangle}
\label{acute}
\end{minipage}
\hspace{0.01\linewidth}
\begin{minipage}[h]{0.3\linewidth}
\centering
\scalebox{0.3}{\includegraphics[clip]{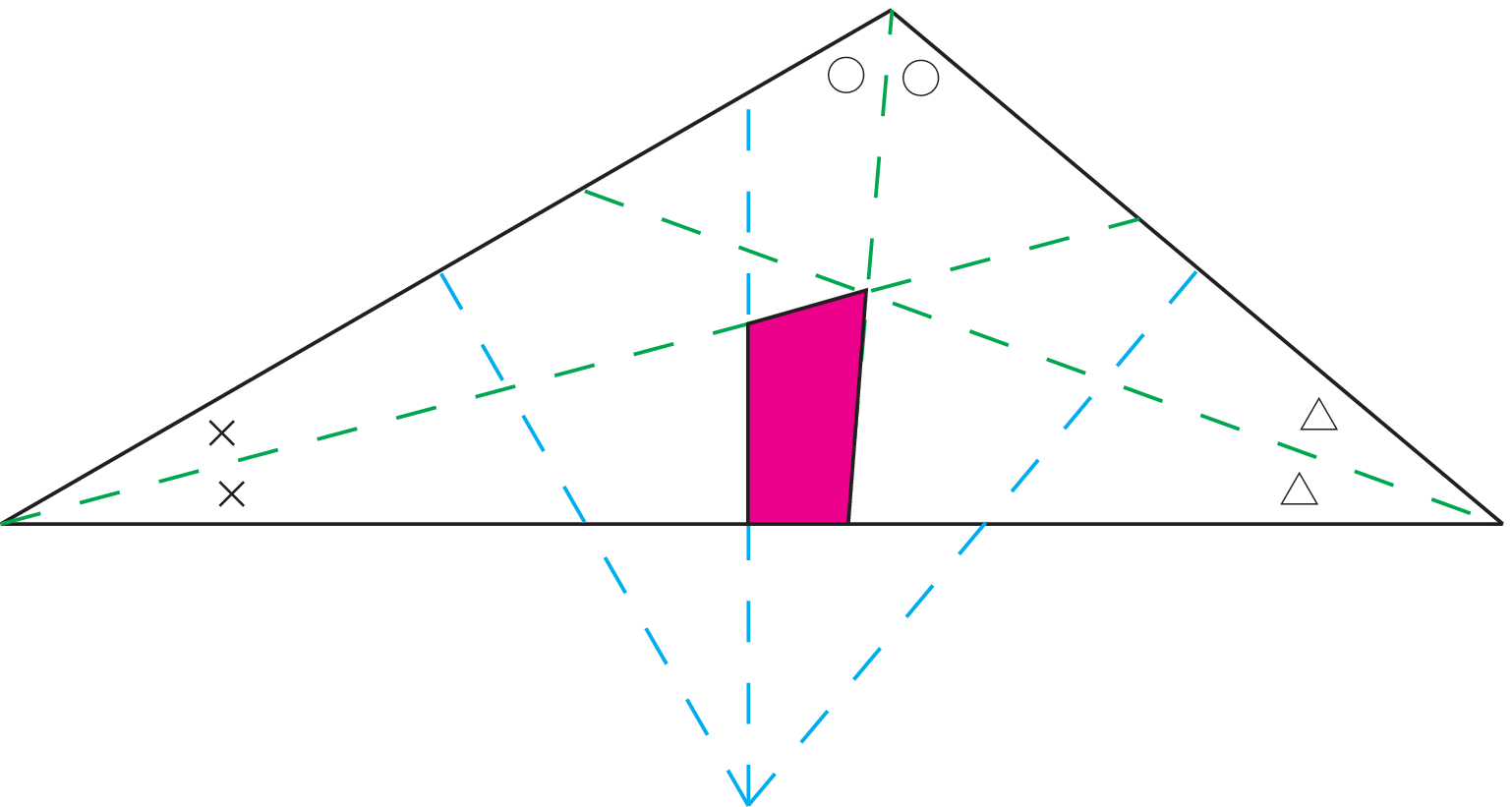}}
\caption{The minimal unfolded region of an obtuse triangle}
\label{obtuse}
\end{minipage}
\end{center}
\end{figure}
}
\end{ex}

\begin{rem}[{\cite[Proposition 1]{BM}, \cite[p. 381]{O3}}]
{\rm
\begin{enumerate}
\item[(1)] The centroid (center of mass) of $\Ome$ is contained in $Uf(\Ome )$. Hence $Uf(\Ome )$ is not empty.
\item[(2)] $Uf(\Ome )$ is compact and convex.
\item[(3)] $Uf(\Ome )$ is contained in the convex hull of $\Ome$.
\end{enumerate}
}
\end{rem}

Using the {\it moving plane method} (\cite{GNN, Ser}), we can restrict the location of the centers prepared in the previous subsections.

\begin{prop}\label{ufV}
Let $\Ome$ be a body in $\R^m$. For $\al \leq 0$, any $r^{\al -m}$-center of $\Ome$ belongs to the minimal unfolded region of $\Ome$.
\end{prop}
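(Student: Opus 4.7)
The natural tool is the moving plane method applied to the renormalized integral. Suppose, for contradiction, that $c$ is an $r^{\al-m}$-center with $c \cdot v > l(v)$ for some direction $v \in S^{m-1}$. The plan is to pick a parameter $b$ with $l(v) \le b < c \cdot v$, set $R=\Refl_{v,b}$, and show that $V_\Ome^{(\al)}(c) \le V_\Ome^{(\al)}(R(c))$ with strict inequality, contradicting maximality of $c$.

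First I would establish the basic setup. Write $\Ome^+ = \Ome_{v,b}^+$ and $\Ome^- = \Ome \cap \{z\cdot v \le b\}$. By the defining property of $l(v)$, $R(\Ome^+) \subset \Ome$, and since $R(\Ome^+) \subset \{z\cdot v \le b\}$, in fact $R(\Ome^+) \subset \Ome^-$. Set $\Ome^{--}=\Ome^-\setminus R(\Ome^+)$, so that $\Ome = \Ome^+ \cup R(\Ome^+) \cup \Ome^{--}$ up to a measure-zero set. The reflected point $c'=R(c)$ lies in $\c{\Ome}$ because $c$ has a neighborhood in $\c{\Ome^+}$ whose image under $R$ is contained in $R(\Ome^+)\subset\Ome$.

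Next, take $\ep>0$ smaller than $\dist(c,\Ome^c)$, $\dist(c',\Ome^c)$, and $c\cdot v - b$, so that $B_\ep(c)\subset \Ome^+$ and $B_\ep(c')\subset R(\Ome^+)$. Using Proposition \ref{without_limit} to write both $V_\Ome^{(\al)}(c)$ and $V_\Ome^{(\al)}(c')$ with the same $\ep$, the renormalization terms $\sigma(S^{m-1})\ep^\al/(-\al)$ (or the logarithmic one when $\al=0$) cancel when taking the difference. Splitting each integral along the decomposition above and using the change of variable $\xi=R(\eta)$ together with the isometry identity $|c-R(\eta)|=|c'-\eta|$ and $|c'-R(\eta)|=|c-\eta|$, the contributions from $\Ome^+$ and from $R(\Ome^+)$ pair up and cancel. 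I expect to be left with
\[
V_\Ome^{(\al)}(c) - V_\Ome^{(\al)}(c')
= \int_{\Ome^{--}} \( \lvert c-\xi\rvert^{\al-m}-\lvert c'-\xi\rvert^{\al-m}\) d\xi .
\]
For $\xi\in\Ome^{--}$ one has $\xi\cdot v < b < c\cdot v$ and $c'\cdot v = 2b-c\cdot v < b$, and a direct computation gives $\lvert c-\xi\rvert^2 - \lvert c'-\xi\rvert^2 = 4(c\cdot v - b)(b-\xi\cdot v) > 0$. Since $\al-m<0$, the integrand is $\le 0$, hence $V_\Ome^{(\al)}(c) \le V_\Ome^{(\al)}(c')$.

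The remaining issue, and the main obstacle, is obtaining the strict inequality. If $\Ome^{--}$ has positive Lebesgue measure for some admissible $b$, then the comparison is strict and yields the desired contradiction. Otherwise, $\Ome^- = R_{v,b}(\Ome^+)$ up to measure zero for every $b\in[l(v),c\cdot v)$, i.e.\ $\Ome$ is reflection-symmetric about every hyperplane $\{z\cdot v = b\}$ in this interval. Choosing two distinct values $b_1\neq b_2$ in the (non-degenerate) interval and composing the two reflections, $\Ome$ would be invariant under a non-trivial translation in direction $v$, contradicting boundedness of $\Ome$. This closes off the only way strict inequality could fail, and the proof goes through verbatim in the case $\al=0$ after replacing the renormalization term with $\sigma(S^{m-1})\log(1/\ep)$, which again cancels in the difference.
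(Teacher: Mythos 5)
Your proof is correct and follows the paper's moving-plane argument: reflect in $\{z\cdot v=b\}$ for some $b$ with $l(v)\le b<c\cdot v$, apply Proposition~\ref{without_limit} with a common $\ep$ so the renormalization terms cancel in the difference, and split the integral along $\Ome=\Ome^+\cup R(\Ome^+)\cup\Ome^{--}$, where the $\Ome^+$ and $R(\Ome^+)$ contributions cancel by the isometry and the remainder over $\Ome^{--}$ has a definite sign. The only place you deviate is in securing strictness. The paper fixes $b=(l(v)+x\cdot v)/2$ and asserts that $\Ome\sm\bigl(\Ome^+_{v,b}\cup\Refl_{v,b}(\Ome^+_{v,b})\bigr)$ has an interior point; this can be checked directly: the set $\c{\Ome}\sm\bigl(\Ome^+_{v,b}\cup\Refl_{v,b}(\Ome^+_{v,b})\bigr)$ is open, and if it were empty then taking closures would give $\Ome=\Ome^+_{v,b}\cup\Refl_{v,b}(\Ome^+_{v,b})$, so $\Ome$ would be reflection-symmetric about $\{z\cdot v=b\}$, forcing $l(v)\ge b$ and contradicting $b>l(v)$. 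Your translational-invariance argument (symmetry across two distinct hyperplanes normal to $v$ would make $\Ome$ unbounded) plugs the same small gap in a somewhat more roundabout but equally valid way.
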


\begin{proof}
Let $x$ be an interior point of $\Ome$ in the complement of the minimal unfolded region of $\Ome$. We show that the point $x$ is not an $r^{\al-m}$-center of $\Ome$.

We can choose a direction $v \in S^{m-1}$ with $l(v) <x\cdot v$. Let $b =(l(v)+x\cdot v)/2$. Then, the region $\Refl_{v,b} (\Ome^+_{v,b})$ is contained in $\Ome$, and $\Ome \sm (\Ome^+_{v,b} \cup  \Refl_{v,b} (\Ome^+_{v,b}))$ has an interior point. 

Let $x'=\Refl_{v,b}(x)$. We choose a small enough $\ep >0$ so that the ball $B_\ep (x)$ is contained in the interior of $\Ome$. Then, we have the following properties:
\begin{align*}
\int_{\Ome^+_{v,b} \sm B_\ep (x)} \lvert x-\xi \rvert^{\al-m} d\xi
&=\int_{\Refl_{v,b}\( \Ome^+_{v,b} \) \sm B_\ep \(x'\)} \lvert x'-\xi \rvert^{\al-m} d\xi ,\\
\int_{\Refl_{v,b}\( \Ome^+_{v,b} \)} \lvert x-\xi \rvert^{\al-m} d\xi
&=\int_{\Ome^+_{v,b}}\lvert x'-\xi \rvert^{\al-m} d\xi .
\end{align*}
Furthermore, for any point $\xi \in \Ome \sm (\Ome^+_{v,b} \cup  \Refl_{v,b} (\Ome^+_{v,b}))$, we have $\lvert x-\xi \rvert^{\al-m} < \lvert x'-\xi \rvert^{\al-m}$. Hence, by Proposition \ref{without_limit}, we obtain
\begin{align*}
V_\Ome^{(\al)} (x)-V_\Ome^{(\al)}\(x'\)
&=\int_{\Ome \sm B_\ep(x)} \lvert x-\xi \rvert^{\al-m} d\xi -\int_{\Ome \sm B_\ep \( x' \)} \lvert x'-\xi \rvert^{\al-m} d\xi \\
&= \( \int_{\Ome^+_{v,b} \sm B_\ep (x)} +\int_{\Refl_{v,b} \( \Ome^+_{v,b}\)} + \int_{\Ome \sm \(\Ome^+_{v,b} \cup  \Refl_{v,b} \( \Ome^+_{v,b}\)\)} \) \lvert x -\xi \rvert^{\al-m} d\xi \\
&\quad - \( \int_{\Refl_{v,b}\( \Ome^+_{v,b}\) \sm B_\ep \( x'\)} +\int_{\Ome^+_{v,b}} + \int_{\Ome \sm \(\Ome^+_{v,b} \cup  \Refl_{v,b} \( \Ome^+_{v,b}\)\)} \) \lvert x' -\xi \rvert^{\al-m} d\xi \\
&<0,
\end{align*}
which completes the proof.
\end{proof}

\begin{rem}
{\rm In \cite[Theorem 3.5]{O3}, O'Hara asserted the same statement as Proposition \ref{ufV} when $\Ome$ has a piecewise $C^1$ boundary. But, in this paper, we does not assume the smoothness of a body $\Ome$.}
\end{rem}

In the same manner as in Proposition \ref{ufV}, we can restrict the location of $k$-centers of $\Ome$ into the minimal unfolded region of $\Ome$.

\begin{prop}[{\cite[Proposition 4.9]{Sak1}}]\label{ufK}
Let $\Ome$ be a body in $\R^m$. If $k$ is strictly decreasing and satisfies the condition $(C_\be^0)$ for some $\be >0$, then any $k$-center of $\Ome$ belongs to the minimal unfolded region of $\Ome$.
\end{prop}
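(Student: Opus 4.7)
The plan is to adapt the moving plane reflection argument used in the proof of Proposition \ref{ufV} to the potential $K_\Ome$. Suppose for contradiction that a $k$-center $c$ of $\Ome$ lies outside $Uf(\Ome)$. Then there exists a direction $v \in S^{m-1}$ with $l(v) < c\cdot v$. Set $b = (l(v)+c\cdot v)/2$, so that $b > l(v)$, and let $c' = \Refl_{v,b}(c)$. By the defining property of $l(v)$, one has $\Refl_{v,b}(\Ome^+_{v,b}) \subset \Ome$. The goal is to show $K_\Ome(c) < K_\Ome(c')$, contradicting the maximality of $c$.

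The next step is to decompose $\Ome$ (up to measure zero) into three pieces: $\Ome^+_{v,b}$, its reflection $\Refl_{v,b}(\Ome^+_{v,b})$, and the residual $R = \Ome \sm (\Ome^+_{v,b} \cup \Refl_{v,b}(\Ome^+_{v,b}))$. Since $\Refl_{v,b}$ is an isometry interchanging $c$ and $c'$, the change of variables $\xi \mapsto \Refl_{v,b}(\xi)$ matches the contribution of $\Ome^+_{v,b}$ to $K_\Ome(c)$ with that of $\Refl_{v,b}(\Ome^+_{v,b})$ to $K_\Ome(c')$, and vice versa, so these pieces cancel in the difference, leaving
\[
K_\Ome(c) - K_\Ome(c') = \int_R \bigl( k(\lvert c-\xi \rvert) - k(\lvert c'-\xi \rvert) \bigr) d\xi .
\]
All integrals here are finite by condition $(C_\be^0)$ and Proposition \ref{c0K}; there is no singularity to renormalize away, which actually makes this argument slightly simpler than that of Proposition \ref{ufV}.

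For the integrand, a direct calculation using $c\cdot v > b$ and $\xi \cdot v \leq b$ yields $\lvert c-\xi \rvert^2 - \lvert c'-\xi \rvert^2 = 4(c\cdot v - b)(b - \xi \cdot v) \geq 0$, with strict inequality whenever $\xi \cdot v < b$. Strict monotonicity of $k$ then gives $k(\lvert c-\xi \rvert) < k(\lvert c'-\xi \rvert)$ almost everywhere on $R$, so once $R$ is known to have positive Lebesgue measure we conclude $K_\Ome(c) < K_\Ome(c')$, the desired contradiction.

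The main obstacle is the same point already used (without a detailed justification) in the proof of Proposition \ref{ufV}: one must verify that $R$ has positive measure, i.e., contains an interior point. This is a structural consequence of the minimality of $l(v)$. If $R$ had measure zero, then $\Refl_{v,b}(\Ome^+_{v,b})$ would coincide with $\Ome \cap \{ z \in \R^m \vert z\cdot v \leq b \}$ up to measure zero, so $\Ome$ would be essentially symmetric about the hyperplane $\{ z\cdot v = b \}$. A short measure-counting argument then shows that for every $b' < b$ the set $\Ome^+_{v,b'}$ has strictly larger measure than $\Ome \cap \{ z\cdot v \leq b' \}$, which rules out $\Refl_{v,b'}(\Ome^+_{v,b'}) \subset \Ome$ and forces $l(v) = b$, contradicting $b > l(v)$. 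With this structural point secured, the argument closes and every $k$-center of $\Ome$ lies in $Uf(\Ome)$.
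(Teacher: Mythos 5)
Your proposal is correct and follows the same moving-plane reflection argument that the paper itself invokes: the paper's treatment of this proposition consists only of the remark ``In the same manner as in Proposition \ref{ufV}'', and your adaptation (decomposing $\Ome$ into $\Ome^+_{v,b}$, its reflection, and the residual $R$, cancelling the first two in the difference $K_\Ome(c)-K_\Ome(c')$, and using strict monotonicity of $k$ on $R$) is exactly the intended transfer, simplified by the absence of the renormalization term. The one point you rightly flag but only sketch is that $R$ has positive measure; the paper asserts the analogous claim in Proposition \ref{ufV} without proof, and your ``measure-counting'' step needs a word more to get the strict inequality $\Vol(\Ome^+_{v,b'}) > \Vol(\Ome\cap\{z\cdot v\le b'\})$ for $b'\in(l(v),b)$ --- if it were an equality for such a $b'$, then folding at $b'$ together with equality of volumes would force $\Ome$ to be essentially symmetric about $H_{b'}$ as well as about $H_b$, and symmetry about two distinct parallel hyperplanes would make a positive-measure bounded set translation-invariant, which is impossible; with that observation in place the argument closes.
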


\begin{rem}
{\rm We refer to \cite{H2} for the location of $r^{\al-m}$-centers. Herburt showed that the (unique) $r^{1-m}$-center of a smooth convex body $A$ belongs to the interior of $A$.

For $\al \leq 0$, we discuss the location of $r^{\al -m}$-centers in Theorem \ref{estimationV} and Corollary \ref{centerV}. For $\al >1$, any $r^{\al -m}$-center of a body $\Ome$ belongs to the intersection $Uf(\Ome ) \cap ( \conv \Ome )^\circ$. This statement follows from the fact that, for a boundary point $x$ of $\conv \Ome$ and the unit outer normal field $n$ of $\conv \Ome$, the derivative
\[
\frac{\pd V_\Ome^{(\al)}}{\pd n(x)} (x) = (\al -m) \int_\Ome r^{\al -m-2} ( x-y ) \cdot n(x) dy
\]
does not vanish.

Herburt's theorem does not follow from the same argument as in the case of $\al >1$. This is because the potential $V_\Ome^{(1)}$ is not differentiable at any boundary point of $\Ome$. Also, the minimal unfolded region of $\Ome$ touches the boundary of $\Ome$ in general (see Example \ref{uf_triangle}).

Hence, for $0<\al <1$, the location of $r^{\al -m}$-centers is unknown. 
}
\end{rem}
\section{Estimation of an {\boldmath $r^{\al-m}$}-potential}
Let $\Ome$ be a body (the closure of a bonded open set) in $\R^m$. By Proposition \ref{ufV}, the set of $r^{\al -m}$-centers ($\al \leq 0$) of $\Ome$ is contained in the minimal unfolded region of $\Ome$. But, by Lemma \ref{boundaryV}, it is expected that any $r^{\al -m}$-center does not exist ``near'' the boundary of $\Ome$. For example, when $\Ome$ is an obtuse triangle in $\R^2$, the minimal unfolded region of $\Ome$ touches the boundary of $\Ome$, but it is expected that any $r^{\al -2}$-center belongs to a smaller closed region contained in the interior of the minimal unfolded region of $\Ome$. Let us show that the expectation is true when the complement of $\Ome$ satisfies the uniform boundary inner cone condition.

Let $C(x)=C(x;\kappa ,\de)$ be an open cone of vertex $x$, axis direction $e_1$, aperture angle $0< \kappa \leq \pi$ and height $0< \de \leq +\infty$, that is,
\begin{align}
\nonumber
C(x)
&=C(x;\kappa ,\de) \\
\label{cone0}
&= \left. \left\{ \rho\(
\begin{array}{c}
\cos \phi_1\\
\sin \phi_1 \cos \phi_2\\
\vdots \\
\sin \phi_1 \cdots \sin \phi_{m-2} \cos \phi_{m-1} \\
\sin \phi_1 \cdots \sin \phi_{m-2} \sin \phi_{m-1} 
\end{array}
\) +x \rvert 0 < \rho < \de ,\ \phi \in \( -\frac{\kappa}{2} ,\frac{\kappa}{2} \) \times [ 0,\pi ]^{m-2} \right\}  ,
\end{align}
where $\phi = (\phi_1 ,\ldots , \phi_{m-1})$.
Let $\Rot_{1m}(\theta)$ denote the rotation in the plane $\Span \langle e_1 ,e_m\rangle$ of  angle $\theta$, that is, 
\begin{equation}\label{rot}
\Rot_{1m}(\theta ) = \( \begin{array}{ccccc}
\cos \theta & &       & &-\sin \theta \\
            &1&       & &             \\
            & &\ddots & &       \\
            & &       &1&       \\
\sin \theta & &       & &\cos \theta
\end{array}\) .
\end{equation}
Let
\begin{equation}\label{cone1}
C_\theta (x) = C_\theta (x; \kappa ,\de ) =\Rot_{1m} (\theta ) C(0; \kappa ,\de) +x .
\end{equation}

\begin{lem}\label{inf}
Let $\al \leq 0$, $0< \kappa \leq \pi$, $0< \de \leq +\infty$, and $0<R <D$. If $\de \leq D-R$ or $\de \geq \sqrt{D^2 -R^2}$, then we have
\[
\min_{0 \leq \theta \leq (\pi -\kappa )/2} \int_{C_\theta \( Re_1 \) \cap B_D (0)} \lvert \xi \rvert^{\al -m} d\xi 
= \int_{C \(Re_1 \) \cap B_D (0)} \lvert \xi \rvert^{\al -m} d\xi ,
\]
where $C_\theta (Re_1 ) =C_\theta (Re_1 ;\kappa ,\de)$ is the cone defined in \eqref{cone1}.
\end{lem}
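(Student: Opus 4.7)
The plan is to rewrite $F(\theta) := \int_{C_\theta(Re_1) \cap B_D(0)} |\xi|^{\alpha - m}\, d\xi$ via a rotation so that only the vertex of the cone moves with $\theta$, then express it as an angular integral of a function $G(p_\theta \cdot \hat v)$ where $G$ is convex and decreasing on $[0, +\infty)$, and finally exploit a reflection symmetry.

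First, the change of variables $\xi' = \Rot_{1m}(-\theta)\xi$ uses rotation-invariance of $|\cdot|$ and $B_D(0)$, and turns $C_\theta(Re_1) = \Rot_{1m}(\theta) C(0) + Re_1$ into $C(p_\theta)$ with $p_\theta := R(\cos\theta\, e_1 - \sin\theta\, e_m)$, giving
\[
F(\theta) = \int_{C(p_\theta) \cap B_D(0)} |\xi'|^{\alpha - m}\, d\xi'.
\]
Writing $\xi' = p_\theta + \rho \hat v$ in polar coordinates about $p_\theta$, the constraint $\theta \le (\pi - \kappa)/2$ forces $q := p_\theta \cdot \hat v \ge R\cos(\theta + \kappa/2) \ge 0$ for every $\hat v$ with $\hat v \cdot e_1 > \cos(\kappa/2)$. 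When $\delta \le D - R$, the triangle inequality yields $C(p_\theta) \subset B_D(0)$, so $\rho$ ranges over $(0, \delta)$. When $\delta \ge \sqrt{D^2 - R^2}$, the elementary inequality $-q + \sqrt{q^2 + D^2 - R^2} \le \sqrt{D^2 - R^2} \le \delta$ for $q \ge 0$ shows the ball boundary is always reached before the cone's height, so $\rho_{\max} = -q + \sqrt{q^2 + D^2 - R^2}$. Either way (using the substitution $s = |p_\theta + \rho \hat v|$ in the latter case), Fubini produces
\[
F(\theta) = \int_{\hat v \cdot e_1 > \cos(\kappa/2)} G(p_\theta \cdot \hat v)\, d\sigma(\hat v)
\]
for an explicit radial profile $G$.

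The main analytic step is to verify that $G$ is convex and decreasing on $[0, +\infty)$. In the first case $G_1(q) = \int_0^\delta (R^2 + 2\rho q + \rho^2)^{(\alpha - m)/2} \rho^{m-1}\, d\rho$, and differentiating twice under the integral gives both properties at once from $(\alpha - m)/2 < 0$. In the second case $G_2(q) = \int_R^D s^{\alpha - m + 1}(-q + \sqrt{q^2 + s^2 - R^2})^{m-1}(q^2 + s^2 - R^2)^{-1/2}\, ds$; after rewriting $\sqrt{q^2 + c} - q = c/(\sqrt{q^2 + c} + q)$, a computation of $(\ln h_s)''$ for the integrand $h_s$ combined with the identity $h_s'' = h_s[((\ln h_s)')^2 + (\ln h_s)'']$ yields an expression of the form $((m-1)^2 + 2)q^2 + m(m-2)(s^2 - R^2) + 3(m-1) q\sqrt{q^2 + s^2 - R^2}$ (up to a positive factor), which is manifestly non-negative for $m \ge 2$ and $q \ge 0$. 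This case-2 convexity check is the heaviest step and the main obstacle I anticipate.

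To conclude, apply the reflection $\tau: \hat v \mapsto \hat v - 2\hat v_m\, e_m$. It preserves both the cone $\{\hat v \cdot e_1 > \cos(\kappa/2)\}$ and the surface measure $d\sigma$, and changes $p_\theta \cdot \hat v = R\cos\theta\, \hat v_1 - R\sin\theta\, \hat v_m$ into $R\cos\theta\, \hat v_1 + R\sin\theta\, \hat v_m$. Averaging the two resulting expressions of $F(\theta)$ and invoking convexity of $G$ yields
\[
F(\theta) \ge \int_{\hat v \cdot e_1 > \cos(\kappa/2)} G(R\cos\theta\, \hat v_1)\, d\sigma(\hat v),
\]
and since $G$ is decreasing on $[0, +\infty)$ and $0 \le R\cos\theta\, \hat v_1 \le R\hat v_1$ (using $\hat v_1 > 0$ and $0 \le \cos\theta \le 1$), the right-hand side is bounded below by $\int G(R\hat v_1)\, d\sigma(\hat v) = F(0)$, which is the claim.
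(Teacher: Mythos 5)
Your proof is correct, and although it exploits the same reflection symmetry as the paper (your $\hat v_m \mapsto -\hat v_m$ is the paper's $\phi_{m-1} \mapsto -\phi_{m-1}$), the mechanism is genuinely different. The paper argues pointwise: it proves $\De(\theta,\phi) \geq 0$, which is in effect a majorization statement comparing the distance multisets $\{\lvert \xi(\theta,\phi)\rvert, \lvert \xi(\theta,\bar\phi)\rvert\}$ and $\{\lvert \xi(0,\phi)\rvert, \lvert \xi(0,\bar\phi)\rvert\}$, and then converts this into an integral inequality; in the regime $\de \geq \sqrt{D^2-R^2}$ it needs an additional comparison between $U_\theta \cup L_\theta$ and $C_\theta(Re_1)\cap B_D(0)$ that the paper only indicates by a volume inequality and a figure. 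You instead integrate out the radial variable first, using the two hypotheses on $\de$ exactly to ensure the radial range depends on $\hat v$ only through $q = p_\theta\cdot\hat v$, obtaining a single radial profile $G$; the conclusion is then a clean ``reflect, average, apply convexity, then apply monotonicity'' argument. Conceptually the two routes are close (a convex decreasing kernel applied to a majorized pair gives a smaller sum), but yours makes the convexity explicit and sidesteps the only-sketched set surgery of the paper, at the cost of the direct second-derivative computation for $G_2$ — which you correctly flagged as the heavy step and which does check out: using $u=\sqrt{q^2+c}$ with $c=s^2-R^2$, one gets $h_s''/h_s = \bigl(((m-1)^2+2)q^2 + m(m-2)c + 3(m-1)qu\bigr)/u^4 \geq 0$ for $m \geq 2$, $q \geq 0$. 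The only point worth tightening in a polished write-up is the justification for differentiating under the integral near $s=R$ in the $G_2$ case (the integrand degenerates as $c\to 0^+$), but for $m\geq 2$ this is routine.
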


\begin{proof}
We take a point $\xi (\theta ,\phi )$ from $C_\theta (Re_1)$ as
\[
\xi(\theta, \phi )
=\rho  \( \begin{array}{ccccc}
\cos \theta & &       & &-\sin \theta \\
            &1&       & &             \\
            & &\ddots & &       \\
            & &       &1&       \\
\sin \theta & &       & &\cos \theta
\end{array}\) 
\(
\begin{array}{c}
\cos \phi_1\\
\sin \phi_1 \cos \phi_2\\
\vdots \\
\sin \phi_1 \cdots \sin \phi_{m-2} \cos \phi_{m-1} \\
\sin \phi_1 \cdots \sin \phi_{m-2} \sin \phi_{m-1} 
\end{array}
\)+Re_1.
\]
We remark
\[
\lvert \xi (\theta ,\phi ) \rvert = \sqrt{\rho^2 +R^2 +2\rho R \( \cos \theta \cos \phi_1 -\sin \theta \sin \phi_1 \cdots \sin \phi_{m-2} \sin \phi_{m-1} \)} .
\]
In order to estimate the contribution of a point in the intersection $C_\theta (Re_1) \cap B_D(0)$ to the integral, let us show the non-negativity of the difference
\[
\De (\theta ,\phi )
:= \lvert \xi (0 ,\phi) \rvert - \lvert \xi (\theta ,\phi) \rvert -\lvert \lvert \xi \( 0 , \bar{\phi} \) \rvert -  \lvert \xi \( \theta ,\bar{\phi} \) \rvert \rvert 
\] 
for any $0\leq \theta \leq (\pi -\kappa) /2$ and $\phi \in [0,\kappa /2) \times [0,\pi]^{m-2}$ (see Figure \ref{contribution}), where $\bar{\phi} = ( \phi_1 , \ldots ,  \phi_{m-2},-\phi_{m-1} )$.
\begin{figure}[hbtp]
\centering
\scalebox{0.8}{\includegraphics[clip]{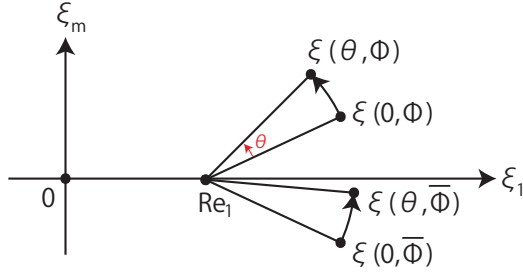}}
\caption{The location of $\xi (\theta ,\phi )$ and the difference $\De (\theta ,\phi)$.}
\label{contribution}
\end{figure}

If $\lvert \xi (0,\bar{\phi}) \rvert \geq  \lvert \xi ( \theta ,\bar{\phi} ) \rvert$, then we have
\[
\De (\theta ,\phi )=\lvert \xi \( \theta ,\bar{\phi}\) \rvert - \lvert \xi (\theta ,\phi) \rvert \geq 0.
\]
Let us consider the case of $\lvert \xi (0,\bar{\phi}) \rvert \leq  \lvert \xi (\theta ,\bar{\phi}) \rvert$. Then we have
\[
\De (\theta ,\phi )=2 \lvert \xi (0,\phi) \rvert - \lvert \xi (\theta ,\phi) \rvert -\lvert \xi \( \theta ,\bar{\phi}\) \rvert .
\]
It is sufficient to show the non-negativity of the difference
\[
4\lvert \xi (0,\phi) \rvert^2-\( \lvert \xi (\theta ,\phi) \rvert +\lvert \xi \( \theta ,\bar{\phi} \) \rvert \)^2 .
\]
Since we have
\[
2\lvert \xi (0,\phi) \rvert^2-\lvert \xi (\theta ,\phi) \rvert^2 -\lvert \xi \( \theta ,\bar{\phi} \) \rvert^2
=4\rho R \cos \phi_1 \( 1-\cos \theta\) \geq 0 ,
\]
we get
\begin{align*}
4\lvert \xi (0,\phi) \rvert^2-\( \lvert \xi (\theta ,\phi) \rvert +\lvert \xi \( \theta ,\bar{\phi} \) \rvert  \)^2 
&\geq 2\lvert \xi (0,\phi) \rvert^2-2\lvert \xi (\theta ,\phi) \rvert  \lvert \xi \( \theta ,\bar{\phi} \) \rvert \\
&\geq \lvert \xi (\theta ,\phi) \rvert^2 + \lvert \xi \( \theta ,\bar{\phi} \) \rvert^2 - 2\lvert \xi (\theta ,\phi) \rvert \lvert \xi \( \theta ,\bar{\phi} \) \rvert \\
&\geq 0 .
\end{align*}

In order to complete the proof, we prepare the following notation:
\begin{align*}
U_\theta &= \Rot_{1m}(\theta ) \( C\(Re_1\) \cap B_D(0) \cap \left\{ \xi_m \geq 0 \right\} -Re_1 \) +Re_1 ,\\
L_\theta &= \Rot_{1m}(\theta ) \( C\(Re_1\) \cap B_D(0) \cap \left\{ \xi_m \leq 0 \right\} -Re_1 \) +Re_1 .
\end{align*}
The non-negativity of the difference $\De (\theta ,\phi)$ implies
\[
\( \int_{L_0} -\int_{L_\theta} \) \lvert \xi \rvert^{\al -m} d\xi
\leq \( \int_{U_\theta} -\int_{U_0} \) \lvert \xi \rvert^{\al -m} d\xi ,
\]
and hence, we get
\[
\int_{C \( Re_1 \) \cap B_D (0)} \lvert \xi \rvert^{\al -m} d\xi 
=\int_{U_0 \cup L_0} \lvert \xi \rvert^{\al -m} d\xi 
\leq \int_{U_\theta \cup L_\theta} \lvert \xi \rvert^{\al -m} d\xi .
\]

If $\de \leq D-R$, then $C_\theta (Re_1 ) = C_\theta (Re_1 ) \cap B_D (0) = U_\theta \cup L_\theta$, that is, the proof is completed in this case. Let us consider the case of $\de \geq \sqrt{D^2 -R^2}$. Using the non-negativity of $\De (\theta, \phi)$, we can show
\[
\Vol \( \( U_\theta \cup L_\theta \) \sm \( C_\theta \( Re_1\) \cap B_D(0) \)\)
\leq \Vol \( \( C_\theta \( Re_1\) \cap B_D(0) \) \sm \( U_\theta \cup L_\theta \) \) 
\]
(see Figure \ref{cones}). Hence we obtain
\[
\int_{U_\theta \cup L_\theta} \lvert \xi \rvert^{\al-m}d\xi
\leq \int_{C_\theta \( Re_1\) \cap B_D(0)} \lvert \xi \rvert^{\al-m}d\xi ,
\]
which completes the proof .
\begin{figure}[hbtp]
\centering
\scalebox{0.5}{\includegraphics[clip]{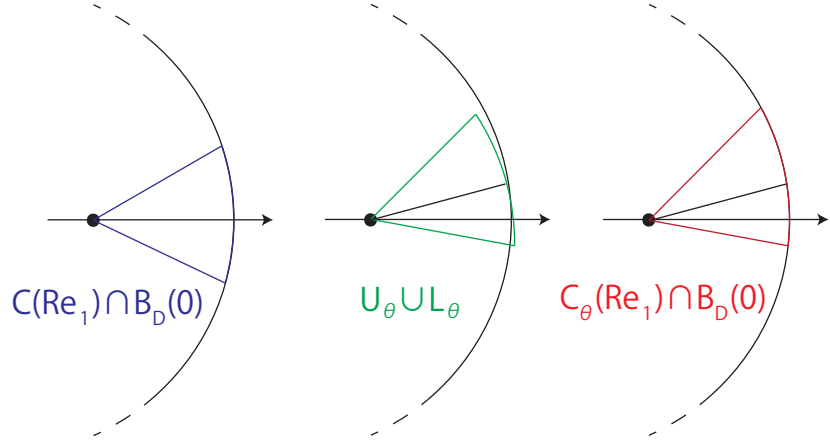}}
\caption{The estimation of the integrals}
\label{cones}
\end{figure}
\end{proof}

\begin{lem}\label{E} 
Let $\al \leq 0$, $0 < \kappa \leq \pi$, $0< \de \leq +\infty$, $D>0$, and $0<R_0 <D$. Define the function
\[
E (R) = E\( R; \al ,\kappa ,\de , D, R_0 \) =
\min_{0 \leq \theta \leq (\pi -\kappa)/2} \( \int_{C_\theta \(Re_1 \) \cap B_D(0)}  -\int_{B_D (0) \sm B_{R_0}(0)} \) \lvert \xi \rvert^{\al -m} d\xi ,\ R>0,
\]
where $C_\theta (Re_1) = C_\theta (Re_1 ;\kappa ,\de)$ is the cone defined in \eqref{cone1}.
\begin{enumerate}[$(1)$]
\item The function $E$ is strictly decreasing.
\item There exists a unique positive constant $\tilde{R} = \tilde{R} (\al ,\kappa ,\de ,D, R_0)$ such that $E(R)>0$ if $R < \tilde{R}$, and that $E (R)<0$ if $R> \tilde{R}$. In particular, $\tilde{R}$ is the unique zero point of $E$. 
\item The unique zero point $\tilde{R}$ is less than $R_0$.
\end{enumerate}
\end{lem}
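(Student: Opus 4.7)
The plan is to set $c := \int_{B_D(0) \setminus B_{R_0}(0)} |\xi|^{\alpha-m}\,d\xi$ and $f(R,\theta) := \int_{C_\theta(Re_1) \cap B_D(0)} |\xi|^{\alpha-m}\,d\xi$, so that $E(R) = \min_{\theta\in[0,(\pi-\kappa)/2]} f(R,\theta) - c$. I will first show strict monotonicity of $f(\cdot,\theta)$ for each fixed $\theta$, deduce (1), then analyse the two boundary regimes $R\to 0^+$ and $R = R_0$, and finally obtain (2) and (3) by the intermediate value theorem.

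For the monotonicity, translate $\eta := \xi - Re_1$ to make the domain independent of $R$:
\begin{equation*}
f(R,\theta) = \int_{C_\theta(0)} |\eta + Re_1|^{\alpha-m}\,\chi_{B_D(0)}(\eta + Re_1)\,d\eta.
\end{equation*}
Parametrising $\eta = \rho\,\mathrm{Rot}_{1m}(\theta)v$ via \eqref{cone0}, a direct computation gives
\begin{equation*}
\eta\cdot e_1 \;\geq\; \rho\cos\bigl(\theta + \tfrac{\kappa}{2}\bigr) \;\geq\; 0,
\end{equation*}
the last inequality being exactly the restriction $\theta + \kappa/2 \leq \pi/2$ built into the range of $\theta$. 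Hence $|\eta + Re_1|^2 = |\eta|^2 + R^2 + 2R\,\eta\cdot e_1$ is strictly increasing in $R$ on a full-measure subset of $C_\theta(0)$, and because $\alpha - m < 0$ the integrand is pointwise non-increasing in $R$ and strictly decreasing on the set where it remains positive. Integrating yields strict monotonicity of $f(\cdot,\theta)$. This strict decrease transfers to $E$ by the chain $f(R_1,\theta_1) > f(R_2,\theta_1) \geq f(R_2,\theta_2)$ applied to minimizers $\theta_j$ of $f(R_j,\cdot)$, proving (1).

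For the boundary behaviour, the same sign $\eta\cdot e_1 \geq 0$ yields $|\eta + Re_1| \leq |\eta| + R$, producing the $\theta$-independent lower bound
\begin{equation*}
f(R,\theta) \;\geq\; \sigma\bigl(C(0)\cap S^{m-1}\bigr)\int_0^{\min(\delta,\,D-R)} (R+\rho)^{\alpha-m}\rho^{m-1}\,d\rho,
\end{equation*}
which diverges to $+\infty$ as $R\to 0^+$, since $\rho^{\alpha-1}$ is non-integrable at the origin for $\alpha \leq 0$; hence $E(R) > 0$ for $R$ small. At $\theta = 0$ and $R = R_0$, every $\xi \in C(R_0 e_1)$ satisfies $|\xi|^2 = R_0^2 + \rho^2 + 2R_0\rho\,v\cdot e_1 \geq R_0^2$, so $C(R_0 e_1)\cap B_D(0) \subset B_D(0)\setminus B_{R_0}(0)$, with a positive-measure difference (for instance, a neighbourhood of $-R_0 e_1$ lies in the annulus but not in the cone); positivity of the integrand then gives the strict inequality $f(R_0,0) < c$, whence $E(R_0) < 0$. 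Continuity of $E$ (from dominated convergence, since on any compact $R$-sub-interval of $(0,\infty)$ the cone $C_\theta(Re_1)$ stays away from the origin and the integrand is dominated by a constant multiple of $\chi_{B_D(0)}$) combined with strict monotonicity delivers, via the intermediate value theorem, a unique zero $\tilde{R}\in(0,R_0)$, establishing (2) and (3). The principal technical point is the inequality $\eta\cdot e_1 \geq 0$: it is precisely this that forces the restriction $\theta\in[0,(\pi-\kappa)/2]$ to appear in the hypotheses, and once it is in hand both the monotonicity of the integrand in $R$ and the strict containment at $R = R_0$ follow with little further work.
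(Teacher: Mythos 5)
Your proof is correct and follows the same overall architecture as the paper: strict monotonicity of $E$, divergence $E(R)\to+\infty$ as $R\to 0^+$, negativity at $R_0$, and the intermediate value theorem. In one sub-step, though, you take a genuinely simpler route. For the divergence as $R\to 0^+$, the paper first invokes Lemma~\ref{inf} to identify $\theta=0$ as the minimizing angle for a sufficiently small truncated cone, and then appeals to the boundary estimate behind Lemma~\ref{boundaryV}; you instead note that the triangle inequality $|\eta+Re_1|\leq|\eta|+R$ on the translated cone $C_\theta(0)$ yields the $\theta$-uniform lower bound
\[
f(R,\theta)\geq\sigma\bigl(C(0;\kappa,1)\cap S^{m-1}\bigr)\int_0^{\min(\delta,\,D-R)}(R+\rho)^{\alpha-m}\rho^{m-1}\,d\rho\longrightarrow+\infty ,
\]
which handles the minimum over $\theta$ automatically and dispenses with Lemma~\ref{inf} in this step. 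You also make explicit, via the substitution $\eta=\xi-Re_1$ and the observation $\eta\cdot e_1\geq\rho\cos(\theta+\kappa/2)\geq 0$, the pointwise monotonicity in $R$ behind the paper's terse appeal to ``the strictly decreasing behavior of $r\mapsto r^{\alpha-m}$'' in part (1); the same inequality underlies the containment $C_\theta(Re_1)\cap B_D(0)\subset B_D(0)\setminus B_{R_0}(0)$ for $R\geq R_0$ that gives negativity. One small infelicity: the triangle inequality $|\eta+Re_1|\leq|\eta|+R$ does not actually require $\eta\cdot e_1\geq 0$; that sign condition is what you need for the monotonicity and the containment, not for this upper bound.
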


\begin{proof}
(1) Let $0< R_1 <R_2$. We denote by $\theta_j$ an angle giving the minimum value in the definition of $E (R_j)$. The strictly decreasing behavior of the function $r \mapsto r^{\al -m}$ implies
\begin{align*}
E \( R_1 \) 
&= \( \int_{C_{\theta_1} \(R_1e_1\) \cap B_D(0)}  -\int_{B_D (0) \sm B_{R_0}(0)} \) \lvert \xi \rvert^{\al -m} d\xi \\
&> \( \int_{C_{\theta_1} \( R_2 e_1\) \cap B_D(0)}  -\int_{B_D (0) \sm B_{R_0}(0)} \) \lvert \xi \rvert^{\al -m} d\xi \\
&\geq \( \int_{C_{\theta_2} \( R_2 e_1\) \cap B_D(0)}  -\int_{B_D (0) \sm B_{R_0}(0)} \) \lvert \xi \rvert^{\al -m} d\xi \\
&= E \( R_2 \) .
\end{align*}

(2) First, we remark that $E (R)$ is negative for $R \geq R_0$. This is because, for any $0 \leq \theta \leq (\pi -\kappa )/2$ and $R \geq R_0$, $C_\theta \( Re_1 \) \cap B_D (0)$ is contained in the annulus $B_D (0) \sm B_{R_0} (0)$. 

Next, we show that $E(R)$ diverges to $+\infty$ as $R \to 0^+$. We take a small enough $\ep >0$ so that $\ep \de <D-R_0$. Then, for any $0 \leq \theta \leq (\pi -\kappa )/2$ and $R \leq R_0$, the small cone $\ep C_\theta (Re_1 )$ is contained in the ball $B_D(0)$. From Lemma \ref{inf}, we have
\[
\min_{0 \leq \theta \leq (\pi -\kappa )/2} \int_{\ep C_\theta \( Re_1 \) \cap B_D (0)} \lvert \xi \rvert^{\al -m} d\xi 
= \int_{\ep C \(Re_1 \) \cap B_D (0)} \lvert \xi \rvert^{\al -m} d\xi .
\]
Therefore, Lemma \ref{boundaryV} implies 
\[
E(R) 
\geq \( \int_{\ep C \(Re_1 \) \cap B_D (0)} -\int_{B_D (0) \sm B_{R_0} (0)} \) \lvert \xi \rvert^{\al -m} d\xi \to +\infty 
\]
as $R\to 0^+$.

Hence the continuity of $E$ implies the existence and uniqueness of  a zero point of $E$.

(3) The statement was shown in the proof of (2) as $E(R)$ is negative for $R \geq R_0$.
\end{proof}

\begin{thm}\label{estimationV} 
Let $\al \leq 0$. Let $X$ and $Y$ be bodies in $\R^m$. Suppose that the complement of $Y$ satisfies the uniform boundary inner cone condition of aperture angle $\kappa$ and height $\de$. Let $R_0 >0$, and $\tilde{R} = \tilde{R} ( \al ,\kappa , \de ,\diam Y, R_0 )$ be as in Lemma \ref{E}. For any points $x \in X$ with $\dist (x, X^c) \geq R_0$ and $y \in  Y$ with $\dist (y,Y^c) \leq \tilde{R}$, we have $V_Y^{(\al)} (y) < V_X^{(\al)}(x)$.
\end{thm}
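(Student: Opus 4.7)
The plan is to reduce the comparison $V_Y^{(\al)}(y) < V_X^{(\al)}(x)$ to the inequality $E(R) \geq 0$ already supplied by Lemma \ref{E}, for $R = \dist(y, Y^c) \in (0, \tilde{R}]$, by bounding each potential separately in terms of explicit radial integrals together with the boundary cone of $Y^c$.

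First I would dispose of the trivial boundary case $y \in \pd Y$ via Lemma \ref{boundaryV}(1), which forces $V_Y^{(\al)}(y) = -\infty$. Assuming henceforth $y \in \c{Y}$, translate so that $y = 0$, choose a closest boundary point $z \in \pd Y$, and rotate so that $z = Re_1$ with $R = \dist(0, Y^c) \leq \tilde{R}$. The uniform boundary inner cone condition for $Y^c$ supplies an open cone $C(z) \subset Y^c$ of vertex $z$, aperture $\kappa$, and height $\de$. The key geometric step is to observe that the open ball of radius $R$ about $0$ is contained in $Y$, so $C(z) \subset \{\xi_1 \geq R\}$: plugging any $u = R_\ast v$ from the defining cap into $|z + \rho u|^2 = R^2 + 2\rho R (u \cdot e_1) + \rho^2 \geq R^2$ and letting $\rho \to 0^+$ forces $u \cdot e_1 \geq 0$ for every direction in the cap, which pins the axis angle $\theta = \arccos(v \cdot e_1)$ to satisfy $\theta \leq (\pi - \kappa)/2$. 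After a further rotation fixing $e_1$ we may take $v \in \Span\langle e_1, e_m\rangle$, so $C(z) = C_\theta(Re_1; \kappa, \de)$ in the notation of \eqref{cone1}.

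For the upper bound on $V_Y^{(\al)}(y)$, I would use $Y \subset B_D(0)$ with $D = \diam Y$ together with Proposition \ref{without_limit} at any $\ep < R$ (so $B_\ep(0) \subset Y$): splitting $Y \setminus B_\ep(0) = (B_D(0) \setminus B_\ep(0)) \setminus (B_D(0) \setminus Y)$ and computing the elementary annular integral, one obtains the identity
\[
V_Y^{(\al)}(y) = \frac{\sigma\(S^{m-1}\)}{\al} D^\al - \int_{B_D(0) \setminus Y} |\xi|^{\al - m} d\xi \qquad (\al < 0),
\]
with $\sigma\(S^{m-1}\) \log D$ replacing the first term when $\al = 0$ (as in the proof of Lemma \ref{boundaryV}). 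The inclusion $C_\theta(Re_1) \cap B_D(0) \subset B_D(0) \setminus Y$ followed by Lemma \ref{inf} gives the upper estimate $V_Y^{(\al)}(y) \leq \frac{\sigma(S^{m-1})}{\al} D^\al - \int_{C(Re_1) \cap B_D(0)} |\xi|^{\al-m} d\xi$. For the lower bound on $V_X^{(\al)}(x)$, $B_{R_0}(x) \subset X$ and Proposition \ref{without_limit} at $\ep = R_0$ (justified by continuity in $\ep$ when $\dist(x, X^c) = R_0$) yield
\[
V_X^{(\al)}(x) = \int_{X \setminus B_{R_0}(x)} |x - \xi|^{\al - m} d\xi - \frac{\sigma\(S^{m-1}\)}{-\al} R_0^\al \geq \frac{\sigma\(S^{m-1}\)}{\al} R_0^\al,
\]
with the analogous logarithmic estimate at $\al = 0$.

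Subtracting and using $\frac{\sigma(S^{m-1})}{\al}(R_0^\al - D^\al) = -\int_{B_D(0) \setminus B_{R_0}(0)} |\xi|^{\al - m} d\xi$ (and its logarithmic counterpart at $\al = 0$), the difference collapses into
\[
V_X^{(\al)}(x) - V_Y^{(\al)}(y) \geq \int_{C(Re_1) \cap B_D(0)} |\xi|^{\al - m} d\xi - \int_{B_D(0) \setminus B_{R_0}(0)} |\xi|^{\al - m} d\xi = E(R) \geq 0
\]
by Lemma \ref{E} and $R \leq \tilde{R}$. The main obstacle is recovering \emph{strict} inequality at the endpoint $R = \tilde{R}$, where $E(\tilde{R}) = 0$: strictness must come from the containment $C_\theta(Re_1) \cap B_D(0) \subsetneq B_D(0) \setminus Y$ being proper on a set of positive measure. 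I would justify this from the diameter constraint $\diam Y = D$: equality up to a null set would force the closed sets $Y$ and $B_D(0) \setminus C_\theta(Re_1)$ to coincide, but the latter has diameter $2D$ because $\kappa \leq \pi$ leaves a pair of antipodal points of $\pd B_D(0)$ outside the cone, contradicting $\diam Y = D$. A minor technicality is verifying the hypothesis of Lemma \ref{inf} on $\de$; in the gap where it fails, one first shrinks the cone height to $\min(\de, D - R_0)$, which preserves the boundary inner cone condition for $Y^c$.
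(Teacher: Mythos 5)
Your proof is essentially the paper's: translate so $y=0$, identify the exterior cone $C_{\theta(y)}(Re_1)\subset Y^c$ with $R=\dist(y,Y^c)$ and $\theta(y)\le(\pi-\kappa)/2$, bound $V_Y^{(\al)}(y)$ from above by the cone complement in $B_D(0)$, bound $V_X^{(\al)}(x)$ from below by the inscribed ball, and reduce the difference to $E(R)\ge0$. The explicit identities via Proposition \ref{without_limit}, the geometric argument pinning $\theta(y)\le(\pi-\kappa)/2$, and the separate handling of $y\in\pd Y$ are all sound, and your strictness argument via the diameter constraint fills in a step the paper asserts tersely.

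The one genuine flaw is the detour through Lemma \ref{inf}. You invoke it to replace $\int_{C_{\theta}(Re_1)\cap B_D(0)}$ by $\int_{C(Re_1)\cap B_D(0)}$ (i.e.\ to set $\theta=0$), and then try to patch the height hypothesis of Lemma \ref{inf} by shrinking $\de$ to $\min(\de,D-R_0)$. That patch does not work as stated: $\tilde R=\tilde R(\al,\kappa,\de,\diam Y,R_0)$ depends on $\de$, and replacing $\de$ by a smaller $\de'$ changes $E$ (it shrinks the cone integral) and hence shrinks the zero $\tilde R'\le\tilde R$; you would then only conclude the theorem on $\dist(y,Y^c)\le\tilde R'$, not on the asserted range $\dist(y,Y^c)\le\tilde R$. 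However, the whole detour is unnecessary. Since $E(R)$ is \emph{defined} in Lemma \ref{E} as $\min_{0\le\theta\le(\pi-\kappa)/2}(\int_{C_\theta(Re_1)\cap B_D(0)}-\int_{B_D(0)\sm B_{R_0}(0)})\lvert\xi\rvert^{\al-m}d\xi$, the inequality
\[
\( \int_{C_{\theta(y)}(Re_1)\cap B_D(0)}-\int_{B_D(0)\sm B_{R_0}(0)} \)\lvert\xi\rvert^{\al-m}d\xi\ \ge\ E(R)
\]
holds trivially for your particular $\theta(y)$, with no appeal to Lemma \ref{inf} and no constraint on $\de$ — this is exactly what the paper does. (Lemma \ref{inf} is only needed inside the proof of Lemma \ref{E}(2), where the cone is first scaled so that its hypothesis holds.) If you drop the Lemma \ref{inf} step and use the definition of $E$ as a minimum directly, your argument becomes correct and coincides with the paper's proof.
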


\begin{proof}
If $R_0$ is greater than half of the diameter of $Y$, then the statement obviously hods. Let us consider the case where $R_0$ is not greater than half of the diameter of $Y$. 

Fix an interior point $y$ of $Y$. Let $y'$ be a boundary point of $Y$ with $\vert y -y' \vert = \dist (y,Y^c )$. From the uniform boundary inner cone condition of the complement of $Y$, there is a direction $v(y)$ such that we can take an open cone of vertex $y'$, axis direction $v(y)$, aperture angle $\kappa$ and height $\de$. Let $\theta (y)$ be the angle between $(y' -y)/\vert y'- y\vert$ and $v(y)$. By radial symmetry of the kernel of $V_Y^{(\al )}$, we get
\[
V_Y^{(\al)}(y) 
< V_{B_{\diam Y}(0) \sm C_{\theta (y)} \(\dist \( y,Y^c \)e_1\)}^{(\al)} (0) ,
\]
where $C_{\theta (y)} \(\dist \( y,Y^c \)e_1\) = C_{\theta (y)} \(\dist \( y,Y^c \)e_1;\kappa ,\de \)$ is the cone defined in \eqref{cone1}. Also, we have
\[
V_X^{(\al )} (x) \geq V_{B_{R_0}(0)}^{( \al)} (0) .
\]
Hence, for any points $x \in X$ with $\dist (x, X^c) \geq R_0$ and $y \in  Y$ with $\dist (y,Y^c) \leq \tilde{R}$, we get 
\begin{align*}
V_X^{(\al )}(x)-V_Y^{(\al)}(y)
&> V_{B_{R_0}(0)}^{(\al)}(0) -V_{B_{\diam Y}(0)\sm C_{\theta (y)} \(\dist \( y,Y^c \) e_1\)}^{(\al)} (0) \\
&= \( \int_{C_{\theta(y)} \( \dist \( y,Y^c \) e_1\)\cap B_{\diam Y}(0)} - \int_{B_{\diam Y}(0) \sm B_{R_0}(0)} \) \lvert \xi \rvert^{\al -m}d\xi \\
&\geq E \( \dist \( y,Y^c \) \) \\
&\geq 0,
\end{align*}
where $E= E( \cdot ; \al ,\kappa ,\de ,\diam Y, R_0)$ is defined in Lemma \ref{E}.
\end{proof}

\begin{cor}\label{centerV} 
Let $\al \leq 0$. Let $\Ome$ be a body in $\R^m$ whose complement satisfies the uniform boundary inner cone condition of aperture angle $\kappa$ and height $\de$. Any $r^{\al -m}$-center of $\Ome$ belongs to the intersection $( \Ome \sim \tilde{R}B^m ) \cap Uf(\Ome )$, where $\tilde{R} = \tilde{R} ( \al ,\kappa, \de , \diam \Ome ,R_\infty (\Ome ))$ is given in Lemma \ref{E}.
\end{cor}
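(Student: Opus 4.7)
The plan is to combine Theorem \ref{estimationV} with Proposition \ref{ufV}. All the analytic work has already been done in Theorem \ref{estimationV}; the corollary is the specialization $X=Y=\Ome$ together with the remark that any $r^{\al-m}$-center is, by definition, a maximizer of $V_\Ome^{(\al)}$ on $\c{\Ome}$.

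First I would fix an $r^{\al-m}$-center $c$ of $\Ome$. By Proposition \ref{ufV} one already knows $c \in Uf(\Ome)$, so the only remaining task is to verify $c \in \Ome \sim \tilde{R} B^m$, i.e. $\dist(c,\Ome^c) \geq \tilde{R}$. To this end I would pick a point $x_0 \in \Ome$ realizing the inradius, $\dist(x_0,\Ome^c) = R_\infty(\Ome)$; such a point exists because $\xi \mapsto \dist(\xi,\Ome^c)$ is continuous on the compact set $\Ome$.

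Next, apply Theorem \ref{estimationV} with $X=Y=\Ome$ and $R_0=R_\infty(\Ome)$, so that $\tilde{R}=\tilde{R}(\al,\kappa,\de,\diam\Ome,R_\infty(\Ome))$ as given in Lemma \ref{E}. The conclusion reads: for every $y \in \Ome$ with $\dist(y,\Ome^c) \leq \tilde{R}$ one has $V_\Ome^{(\al)}(y) < V_\Ome^{(\al)}(x_0)$. Since $c$ maximizes $V_\Ome^{(\al)}$ on $\c{\Ome}$, one has $V_\Ome^{(\al)}(c) \geq V_\Ome^{(\al)}(x_0)$, which rules out the possibility $\dist(c,\Ome^c) \leq \tilde{R}$. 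Therefore $\dist(c,\Ome^c) > \tilde{R}$, so $c \in \Ome \sim \tilde{R} B^m$, completing the proof.

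There is no substantive obstacle: the delicate estimates (Lemmas \ref{inf} and \ref{E}) have been absorbed into Theorem \ref{estimationV}, and the only things to check are the admissibility of the choice $R_0=R_\infty(\Ome)$ (guaranteed by the existence of an incenter) and the comparison between maximizer and incenter values (immediate from the definition of an $r^{\al-m}$-center).
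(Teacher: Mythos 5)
Your proof is correct and is essentially the only natural way to deduce the corollary; the paper gives no separate argument for it precisely because it follows immediately from Theorem \ref{estimationV} (specialized to $X=Y=\Ome$, $R_0=R_\infty(\Ome)$) together with Proposition \ref{ufV}, which is exactly the route you take.
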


\begin{ex}\label{dumbbellV}
{\rm Let $\al \leq 0$. For $0\leq \ep \leq 1$, let 
\[
\Ome_\ep = \( [-3,-1] \times B^{m-1} \) \cup \( [-1,1]  \times \ep B^{m-1} \)  \cup \( [1,3] \times B^{m-1} \).
\]
We take an open cone $C$ of aperture angle $\kappa$ and height $\de$ such that the complement of $\Ome_0$ satisfies the uniform interior cone condition for $C$. Then, for any $0<\ep \leq 1$, the complement of the body $\Ome_\ep$ satisfies the uniform interior cone condition for $C$. We remark $\diam \Ome_\ep = 2\sqrt{10}$ and $R_\infty (\Ome_\ep)=1$ for any $0\leq \ep \leq 1$. Let $\tilde{R}= \tilde{R} ( \al ,\kappa ,\de ,3\sqrt{5} ,1)$ be as in Lemma \ref{E}, and fix an $0< \ep < \tilde{R}$. 

Since $Uf(\Ome ) = [-2, 2] \times \{ 0\}^{m-1}$ and $\tilde{R} <1$, Corollary \ref{centerV} implies that any $r^{\al -m}$-center of the body $\Ome_\ep$ belongs to the disjoint union of the intervals $( [ -2 ,-1 -\sqrt{\tilde{R}^2 -\ep^2} ] \cup [ 1+ \sqrt{\tilde{R}^2-\ep^2} ,2 ] ) \times \{ 0 \}^{m-1}$.
Radial symmetry of the kernel of $V_{\Ome_\ep}^{(\al)}$ guarantees that each interval has an $r^{\al -m}$-center. In particular, the potential $V_{\Ome_\ep}^{(\al)}$ has at least two maximizers.
}
\end{ex}

\begin{ex}\label{annulusV}
{\rm Let $\al \leq 0$, and $\Ome = B_3(0) \sm \c{B}_1 (0)$. We take an open cone $C$ of aperture angle $\kappa$ and height $\de$ such that the complement of $\Ome$ satisfies the uniform boundary inner cone condition for $C$. We remark $\diam \Ome =6$ and $R_\infty (\Ome )=1$. Let $\tilde{R} = \tilde{R} (\al ,\kappa, \de ,6, 1)$ be as in Lemma \ref{E}. 

Since $Uf(\Ome )= B_2 (0)$ and $\tilde{R} < 1$, Corollary \ref{centerV} implies that any $r^{\al -m}$-center belongs to the annulus $B_{2}(0) \sm \c{B}_{1+\tilde{R}} (0)$. Radial symmetry of the kernel of $V_\Ome^{(\al)}$ guarantees the existence of a positive constant $1+\tilde{R} \leq \rho \leq 2$ such that the set of $r^{\al -m}$-centers contains the sphere $\rho S^{m-1}$.
}
\end{ex}
\section{Estimation of a potential with a summable kernel}
Let $\Ome$ be a body (the closure of a bounded open set) in $\R^m$. In this section, we estimate a potential of the form
\begin{equation}\label{Kalpha}
K_\Ome^{(\al)}(x,t) =\int_\Ome k_\al\( \lvert x-\xi \rvert ,t\)d\xi ,\ x \in \R^m ,\ t>0.
\end{equation}

\begin{ass}\label{assumption}
{\rm For the kernel $k_\al$ in \eqref{Kalpha}, we assume some or all of the following conditions:
\begin{enumerate}[(1)]
\item $k_\al (\cdot ,t)$ is strictly decreasing and satisfies the condition $(C^0_\be)$ for some $\be >0$.
\item We can choose a pair of positive functions $(\psi ,\bar{k}_\al)$ such that the kernel $k_\al$ is expressed as $k(r,t)=\psi (t) \bar{k}_\al (r,t)$, and that $\bar{k}_\al(r,t)$ converges to $r^{\al -m}$ for each positive $r$ as $t$ tends to $0^+$.
\item For each positive $t$, we have
\[
\int_{\R^m} k_\al \( \lvert \xi \rvert ,t\) d\xi =1 .
\]
\item For any positive $\rho$, we have
\[
\lim_{t\to 0^+} \int_{\R^m \sm B_\rho (0)} k_\al \( \lvert \xi \rvert ,t\) d\xi =0 .
\]
\end{enumerate}
Usually, a radially symmetric non-negative kernel is said to be {\it summable} if it satisfies the conditions (3) and (4). 
}
\end{ass}

\begin{lem}\label{estimationK11}
Let $\al \leq 0$. Suppose that $k_\al$ satisfies the conditions $(1)$ and $(2)$ in Assumption \ref{assumption}. Let $X$ and $Y$ be bodies in $\R^m$. Suppose that the complement of $Y$ satisfies the uniform boundary inner cone condition of aperture angle $\kappa$ and height $\de$. Let $R_0 >0$, and $\tilde{R} = \tilde{R} ( \al ,\kappa , \de ,\diam Y, R_0 )$ be given in Lemma \ref{E}. For any $0<b<1$, there exists a positive $\tau_1$ such that if $0<t<\tau_1$, then, for any $x \in X$ with $\dist (x,X^c) \geq R_0$ and $y \in Y$ with $(b/2) \tilde{R} \leq \dist (y,Y^c) \leq b\tilde{R}$, we have $K_Y^{(\al)} (y,t) < K_X^{(\al)}(x,t)$.
\end{lem}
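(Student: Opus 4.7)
The strategy is to adapt the proof of Theorem~\ref{estimationV}, replacing the Riesz kernel $r^{\al-m}$ by $k_\al(r,t)$ and then passing to the limit $t\to 0^+$ via the factorization $k_\al(r,t)=\psi(t)\bar{k}_\al(r,t)$ guaranteed by Assumption~\ref{assumption}(2). The two-sided bound $(b/2)\tilde{R}\leq \dist(y,Y^c)\leq b\tilde{R}$ plays complementary roles: the upper bound, combined with $b<1$, produces a strictly positive gap via Lemma~\ref{E}, while the lower bound keeps the relevant distances away from zero, enabling a uniform passage to the limit.

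First I would mimic the geometric comparison in the proof of Theorem~\ref{estimationV}. Write $R:=\dist(y,Y^c)\in[(b/2)\tilde{R},b\tilde{R}]$, pick $y'\in\pd Y$ with $|y-y'|=R$, and use the uniform boundary inner cone condition of $Y^c$ to fix an open cone $C(y)\subset Y^c$ of vertex $y'$, aperture $\kappa$, height $\de$; let $v(y)$ be its axis direction and $\theta(y)\in[0,(\pi-\kappa)/2]$ the angle between $v(y)$ and $(y'-y)/|y'-y|$. Since $k_\al(\cdot,t)$ is strictly decreasing by Assumption~\ref{assumption}(1) and $Y\subset B_{\diam Y}(y)\sm C(y)$, radial symmetry (after translating the origin to $y$ and rotating so that $y-y'$ aligns with $e_1$) yields
\[
K_Y^{(\al)}(y,t)< \int_{B_{\diam Y}(0)\sm C_{\theta(y)}(Re_1)} k_\al(|\xi|,t)\,d\xi,\qquad K_X^{(\al)}(x,t)\geq \int_{B_{R_0}(0)} k_\al(|\xi|,t)\,d\xi.
\]
Dividing through by $\psi(t)$ and rearranging, the desired inequality $K_Y^{(\al)}(y,t)<K_X^{(\al)}(x,t)$ reduces (when $R_0<\diam Y$; the opposite case is immediate) to
\[
F(R,\theta,t):=\Big(\int_{C_\theta(Re_1)\cap B_{\diam Y}(0)}-\int_{B_{\diam Y}(0)\sm B_{R_0}(0)}\Big)\bar{k}_\al(|\xi|,t)\,d\xi\geq 0
\]
for every $R\in[(b/2)\tilde{R},b\tilde{R}]$ and $\theta\in[0,(\pi-\kappa)/2]$.

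The key analytic step is to pass to $t\to 0^+$. A direct computation from \eqref{cone0}--\eqref{cone1} shows that for $\theta\in[0,(\pi-\kappa)/2]$ any point of $C_\theta(Re_1)$ has first coordinate at least $R+\rho\cos(\theta+\kappa/2)\geq R$, so on the cone $|\xi|\geq R\geq (b/2)\tilde{R}$, while on the annulus $|\xi|\geq R_0$. Setting $r_*:=\min\{(b/2)\tilde{R},R_0\}>0$, the monotonicity of $k_\al(\cdot,t)$ bounds the integrand on each region by $\bar{k}_\al(r_*,t)$, which by Assumption~\ref{assumption}(2) tends to the finite value $r_*^{\al-m}$; dominated convergence combined with the pointwise limit $\bar{k}_\al(r,t)\to r^{\al-m}$ then gives
\[
\lim_{t\to 0^+}F(R,\theta,t)=\Big(\int_{C_\theta(Re_1)\cap B_{\diam Y}(0)}-\int_{B_{\diam Y}(0)\sm B_{R_0}(0)}\Big)|\xi|^{\al-m}\,d\xi\geq E(R),
\]
where $E$ is the function of Lemma~\ref{E}. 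Since $b\tilde{R}<\tilde{R}$ and $E$ is strictly decreasing, $E(R)\geq E(b\tilde{R})=:2\eta>0$ uniformly on $[(b/2)\tilde{R},b\tilde{R}]$. Upgrading the pointwise convergence to uniform convergence on the compact parameter set $[(b/2)\tilde{R},b\tilde{R}]\times[0,(\pi-\kappa)/2]$ — by exploiting joint continuity of $F$ in $(R,\theta,t)$ inherited from the continuity of $k_\al$ in $r$ and the uniform domination above — produces $\tau_1>0$ for which $F(R,\theta,t)\geq\eta>0$ on the whole parameter set whenever $0<t<\tau_1$, which completes the proof.

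\emph{Main obstacle.} The only genuinely delicate step is the uniform-in-$(R,\theta)$ passage to the limit. The pointwise hypothesis $\bar{k}_\al(r,t)\to r^{\al-m}$ together with monotonicity in $r$ is rather weak, and one has to make sure that the positive gap $2\eta$ produced by Lemma~\ref{E} is not eroded by non-uniform behavior of the convergence as $(R,\theta)$ vary. A careful equicontinuity-type argument, possibly a Dini-style reduction using the monotonicity of $\bar{k}_\al(\cdot,t)$ in $r$, is what is required.
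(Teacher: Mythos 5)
Your plan follows the paper's proof in all essentials: translate so the nearest boundary point lies along $e_1$, compare $K_Y^{(\al)}(y,t)$ with the potential of a ball minus a cone and $K_X^{(\al)}(x,t)$ with the potential of $B_{R_0}(0)$, divide by $\psi(t)$, and pass to $t\to 0^+$ to exploit the strictly positive gap $E(b\tilde{R})$ from Lemma~\ref{E}. The ``main obstacle'' you flag is also present in the paper (where it is dispatched with the phrase ``by the assumption for the kernel and the compactness of $Y$'') and is closed exactly by the Dini/P\'olya-type argument you anticipate---each $\bar{k}_\al(\cdot,t)$ is decreasing and the limit $r^{\al-m}$ is continuous, so pointwise convergence implies uniform convergence on the compact interval $[\min\{(b/2)\tilde{R},R_0\},\diam Y]$---after which the paper simply bounds $\lvert \bar{k}_\al(\lvert\xi\rvert,t)-\lvert\xi\rvert^{\al-m}\rvert$ uniformly over the fixed region $\bigl(C\bigl((b/2)\tilde{R}e_1;\pi,+\infty\bigr)\cap B_{\diam Y}(0)\bigr)\cup\bigl(B_{\diam Y}(0)\sm B_{R_0}(0)\bigr)$ (which contains every relevant cone) instead of arguing uniformity of $F$ in $(R,\theta)$; your ``$\lvert\xi\rvert\geq r_*$'' observation is precisely what makes that region work.
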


\begin{proof}
If $R_0$ is greater than half of the diameter of $Y$, then the statement obviously holds. Let us assume that $R_0$ is not greater than half of the diameter of $Y$.

Thanks to the uniform boundary inner cone condition of the complement of $Y$, in the same manner as in Theorem \ref{estimationV}, for any point $y \in Y$, we can choose a constant $0\leq \theta (y) \leq (\pi -\kappa )/2$ such that, for each $t$, we have
\[
K_Y^{(\al )}(y,t) < K_{B_{\diam Y}(0) \sm C_{\theta (y)}\( \dist \( y, Y^c \) e_1\)}^{(\al)} (0) ,
\]
where $C_{\theta (y)}\( \dist (y, Y^c) e_1\) = C_{\theta (y)}\( \dist (y, Y^c) e_1;\kappa ,\de \)$ is the cone defined in \eqref{cone1}.

By the assumption for the kernel $k_\al$ and the compactness of the body $Y$, there exits a positive constant $\tau_1$ such that if $0< t< \tau_1$, then, for any $\xi \in (C( (b/2) \tilde{R} e_1; \pi , +\infty )\cap B_{\diam Y}(0)) \cup (B_{\diam Y}(0) \sm B_{R_0}(0) )$, we have
\begin{align*}
&\lvert k_\al \( \lvert \xi \rvert ,t\) -\lvert \xi \rvert^{\al -m} \rvert \\
&< \frac{E \( b \tilde{R} \)}{2 \( \Vol \( C \( \frac{b}{2} \tilde{R} e_1; \pi  , +\infty \) \cap B_{\diam Y}(0) \) + \Vol \(B_{\diam Y}(0) \sm B_{R_0}(0)  \)\)} ,
\end{align*}
where $E= E( \cdot ; \al ,\kappa ,\de ,\diam Y, R_0)$ is defined in Lemma \ref{E}. Since, for any $y \in Y$ with $(b/2) \tilde{R} \leq \dist (y,Y^c)$, the cone $C_{\theta (y)} ( \dist (y,Y^c) e_1)$ is contained in the half space $C ((b/2) \tilde{R}e_1 ; \pi  ,+\infty )$, we obtain
\begin{align*}
&\lvert \( \int_{C_{\theta (y)} \( \dist \( y,Y^c \) e_1 \) \cap B_{\diam Y}(0)} - \int_{B_{\diam Y}(0) \sm B_{R_0}(0)} \) \( \bar{k}_\al \( \lvert \xi \rvert ,t\) - \lvert \xi \rvert^{\al -m} \) d\xi \rvert  \\
&\leq  \( \int_{C_{\theta (y)} \( \dist \( y,Y^c \) e_1\) \cap B_{\diam Y}(0)} + \int_{B_{\diam Y}(0) \sm B_{R_0}(0)} \) \lvert \bar{k}_\al \( \lvert \xi \rvert ,t\) - \lvert \xi \rvert^{\al -m} \rvert d\xi \\
&< \frac{1}{2} E \( b \tilde{R} \),
\end{align*} 
for any $y \in Y$ with $(b/2) \tilde{R} \leq \dist (y,Y^c)$.

Hence if $0<t<\tau_1$, then, for any $x \in X$ with $\dist (x,X^c) \geq R_0$ and $y \in Y$ with $(b/2) \tilde{R} \leq \dist (y,Y^c) \leq b\tilde{R}$, we obtain
\begin{align*}
&\frac{K_X^{(\al )} (x,t) -K_Y^{(\al )} (y,t)}{\psi (t)} \\
&> \( \int_{C_{\theta (y)} \( \dist \( y,Y^c \) e_1 \) \cap B_{\diam Y}(0)} - \int_{B_{\diam Y}(0) \sm B_{R_0}(0)} \) \bar{k}_\al \( \lvert \xi \rvert ,t \) d\xi \\
&>  \( \int_{C_{\theta (y)} \( \dist \( y,Y^c \) e_1 \) \cap B_{\diam Y}(0)} - \int_{B_{\diam Y}(0) \sm B_{R_0}(0)} \) \lvert \xi \rvert^{\al-m} d\xi  -\frac{1}{2} E \( b\tilde{R}  \) \\
&\geq E \( \dist \( y,Y^c \) \) -\frac{1}{2} E \( b\tilde{R} \) \\
&\geq \frac{1}{2} E \( b\tilde{R} \) \\
&>0 ,
\end{align*}
where the forth inequality follows from the first assertion in Lemma \ref{E}.
\end{proof}

\begin{lem}\label{estimationK12}
Let $\al$, $k_\al$, $X$, $Y$, $R_0$, $\tilde{R}$, $b$ and $\tau_1$ be as in Lemma \ref{estimationK11}. If $0< t< \tau_1$, then, for any $x \in X$ with $\dist (x,X^c) \geq R_0$ and $y \in Y$ with $\dist (y,Y^c) \leq (b/2) \tilde{R}$, we have $K_Y^{(\al)} (y,t) < K_X^{(\al)}(x,t)$.
\end{lem}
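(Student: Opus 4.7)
The plan is to reduce Lemma \ref{estimationK12} to Lemma \ref{estimationK11} by means of a monotonicity estimate applied to the upper bound used in the proof of that earlier lemma. Using the same setup as in that proof (after translation take $y=0$; the uniform boundary inner cone condition furnishes a cone $C_{\theta (y)}(d e_1 ;\kappa ,\de) \subset Y^c$ for $d := \dist (y, Y^c)$ with $\theta (y) \in [0, (\pi - \kappa)/2]$; and $Y \subset B_{\diam Y}(0) \sm C_{\theta (y)}(d e_1 ;\kappa ,\de)$ gives
\[
K_Y^{(\al )}(y, t) \leq K_{B_{\diam Y}(0) \sm C_{\theta (y)}(d e_1 ;\kappa ,\de)}^{(\al )}(0, t)),
\]
it remains to control this bound when $d \leq (b/2) \tilde R$.

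The central new ingredient is the claim that, for each fixed $\theta \in [0, (\pi - \kappa)/2]$ and $D > 0$, the map
\[
d \mapsto \int_{C_\theta (d e_1 ;\kappa ,\de) \cap B_D(0)} k_\al (\lvert \xi \rvert , t) d\xi
\]
is non-increasing on $[0, +\infty)$. To verify it, fix $0 \leq d_1 \leq d_2$ and note that the constraint $\theta + \kappa /2 \leq \pi /2$ forces $\xi \cdot e_1 \geq d_j$ for every $\xi \in C_\theta (d_j e_1)$. Hence, for each $\xi \in C_\theta (d_2 e_1) \cap B_D(0)$, the identity
\[
\lvert \xi - (d_2 - d_1) e_1 \rvert^2 = \lvert \xi \rvert^2 - 2(d_2 - d_1)(\xi \cdot e_1) + (d_2 - d_1)^2
\]
combined with $\xi \cdot e_1 \geq d_2 \geq (d_2 - d_1)/2$ gives $\lvert \xi - (d_2 - d_1) e_1 \rvert \leq \lvert \xi \rvert \leq D$, so $\xi - (d_2 - d_1) e_1 \in C_\theta (d_1 e_1) \cap B_D(0)$. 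The translation change of variables $\eta = \xi - (d_2 - d_1) e_1$, combined with the strictly decreasing behavior of $k_\al (\cdot , t)$ (Assumption \ref{assumption}(1)) and the positivity of the kernel on the "extra" portion of $C_\theta (d_1 e_1) \cap B_D(0)$, then yields the desired inequality.

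Applying this monotonicity at $d \leq (b/2) \tilde R$ gives
\[
K_Y^{(\al )}(y, t) \leq K_{B_{\diam Y}(0) \sm C_{\theta (y)}((b/2) \tilde R e_1 ;\kappa ,\de)}^{(\al )}(0, t).
\]
The new cone $C_{\theta (y)}((b/2) \tilde R e_1 ;\kappa ,\de)$ lies entirely inside the half-space $C((b/2) \tilde R e_1 ;\pi ,+\infty)$ (again by $\theta (y) + \kappa /2 \leq \pi /2$), so the uniform bound on $\lvert \bar k_\al (\lvert \xi \rvert , t) - \lvert \xi \rvert^{\al - m} \rvert$ already built into the choice of $\tau_1$ in the proof of Lemma \ref{estimationK11} applies verbatim on the relevant domain. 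Replaying the final chain of inequalities of that proof, now with the cone $C_{\theta (y)}((b/2) \tilde R e_1 ;\kappa ,\de)$ in place of $C_{\theta (y)}(\dist (y, Y^c) e_1 ;\kappa ,\de)$, and using the strictly decreasing behavior of $E$ from Lemma \ref{E}(1) to write $E((b/2) \tilde R) \geq E(b \tilde R)$, I would obtain, for $0 < t < \tau_1$,
\[
\frac{K_X^{(\al )}(x, t) - K_Y^{(\al )}(y, t)}{\psi (t)} \geq E((b/2) \tilde R) - \frac{1}{2} E(b \tilde R) \geq \frac{1}{2} E(b \tilde R) > 0.
\]

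The main obstacle is the monotonicity step, whose direction hinges sensitively on the hypothesis $\theta (y) + \kappa /2 \leq \pi /2$; this is exactly what keeps the cones inside the forward half-space $\{\xi \cdot e_1 \geq d\}$ so that shifting the cone forward in $e_1$ can only move representative points farther from the origin, so that the decreasing kernel $k_\al$ yields the correct sign.
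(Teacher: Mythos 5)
Your proof is correct and is essentially the same as the paper's: the paper also reduces the case $\dist(y,Y^c)\le(b/2)\tilde R$ to the range already covered by Lemma \ref{estimationK11} by shifting the cone forward in the $e_1$-direction and invoking the strict decrease of $k_\al(\cdot,t)$, exactly the monotonicity your change-of-variables argument makes explicit. The only cosmetic difference is that the paper shifts the vertex from $Re_1$ to $(R+(b/2)\tilde R)e_1$ (landing anywhere in $[(b/2)\tilde R,b\tilde R]$) and cites the intermediate bound $\ge\frac{\psi(t)}{2}E(b\tilde R)$ already obtained in Lemma \ref{estimationK11}, whereas you shift to the fixed position $(b/2)\tilde Re_1$ and finish with $E((b/2)\tilde R)-\tfrac12E(b\tilde R)\ge\tfrac12E(b\tilde R)$ via $E$'s monotonicity; both variants are equivalent.
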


\begin{proof}
For any $0 \leq \theta \leq (\pi -\kappa )/2$ and $0\leq R \leq (b/2)\tilde{R}$, the strictly decreasing behavior of $k_\al (\cdot ,t)$ implies
\begin{align*}
&\( \int_{C_\theta \( R e_1 \) \cap B_{\diam Y}(0)} - \int_{B_{\diam Y}(0) \sm B_{R_0} (0)} \) k_\al \( \lvert \xi \rvert ,t \) d\xi \\
&> \( \int_{C_\theta \( \( R +(b/2) \tilde{R} \) e_1 \) \cap B_{\diam Y}(0)} - \int_{B_{\diam Y}(0) \sm B_{R_0} (0)} \) k_\al \( \lvert \xi \rvert ,t \) d\xi \\
&\geq \frac{\psi (t)}{2} E \( b \tilde{R} \) ,
\end{align*}
where the last inequality was shown in Lemma \ref{estimationK11}. This inequality implies the conclusion in the same manner as in Lemma \ref{estimationK11}.
\end{proof}

\begin{prop}\label{estimationK1}
Let $\al$, $k_\al$, $X$, $Y$, $R_0$, $\tilde{R}$, $b$ and $\tau_1$ be as in Lemma \ref{estimationK11}. If $0< t< \tau_1$, then, for any $x \in X$ with $\dist (x,X^c) \geq R_0$ and $y \in Y$ with $\dist (y,Y^c ) \leq b\tilde{R}$, we have $K_Y^{(\al )} (y,t) < K_X^{(\al )} (x,t)$.
\end{prop}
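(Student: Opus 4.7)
The plan is to observe that Proposition \ref{estimationK1} is an immediate consequence of Lemmas \ref{estimationK11} and \ref{estimationK12}, obtained by splitting the range $\dist(y, Y^c) \in [0, b\tilde{R}]$ into the two subranges handled separately by those lemmas.

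More precisely, I would first note that the interval $[0, b\tilde{R}]$ decomposes as the union of $[0, (b/2)\tilde{R}]$ and $[(b/2)\tilde{R}, b\tilde{R}]$. Crucially, both Lemma \ref{estimationK11} and Lemma \ref{estimationK12} are stated with the \emph{same} constant $\tau_1$ (the one produced in Lemma \ref{estimationK11}), so no compatibility adjustment is required between the two time thresholds. Fix $0 < t < \tau_1$ and pick $x \in X$ with $\dist(x, X^c) \geq R_0$ and $y \in Y$ with $\dist(y, Y^c) \leq b\tilde{R}$. If $(b/2)\tilde{R} \leq \dist(y, Y^c) \leq b\tilde{R}$, Lemma \ref{estimationK11} yields $K_Y^{(\al)}(y,t) < K_X^{(\al)}(x,t)$; if instead $\dist(y, Y^c) \leq (b/2)\tilde{R}$, Lemma \ref{estimationK12} yields the same inequality.

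Since these two cases exhaust the hypothesis $\dist(y, Y^c) \leq b\tilde{R}$, the conclusion follows in every case. There is no real obstacle here: the work has already been done in the two preceding lemmas, which were deliberately set up so that Lemma \ref{estimationK11} handles points $y$ that are a definite fraction $b/2$ of $\tilde{R}$ away from the boundary (where one can use the cone estimate of Lemma \ref{E} together with the uniform convergence $\bar{k}_\al(\cdot, t) \to |\cdot|^{\al-m}$ on compact subsets of $(0,+\infty)$), while Lemma \ref{estimationK12} extends this to the deeper boundary strip $\dist(y, Y^c) \leq (b/2)\tilde{R}$ by monotonicity of $k_\al(\cdot, t)$, which only makes the left-hand integral larger. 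Consequently, the proof of Proposition \ref{estimationK1} is essentially a one-line case split, and I would present it as such.
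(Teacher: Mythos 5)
Your proof is correct and matches the paper's intent exactly: the paper leaves Proposition \ref{estimationK1} without an explicit proof precisely because it is the immediate union of Lemmas \ref{estimationK11} and \ref{estimationK12}, with the same threshold $\tau_1$ in both, and you have spelled out the case split correctly.
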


\begin{lem}\label{small-time_cone} 
Suppose that $k_\al$ satisfies the conditions $(3)$ and $(4)$ in Assumption \ref{assumption}. Let $C_\theta (x;\kappa ,\de)$ be the cone defined in \eqref{cone1}. For any $\theta$, we have
\[
\lim_{t\to 0^+} \int_{C_\theta (0;\kappa ,\de)} k_\al \( \lvert \xi \rvert ,t\) d\xi = \frac{\sigma \( C(0;\kappa ,1) \cap S^{m-1}\)}{\sigma\(S^{m-1}\)} .
\]
\end{lem}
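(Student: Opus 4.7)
The plan is to exploit two pieces of structure: the kernel $k_\al(|\xi|,t)$ is radial, and the cone $C_\theta(0;\kappa,\de)$ is a rotation of $C(0;\kappa,\de)$ about the origin. Together these mean the integral is independent of $\theta$, so we may replace $C_\theta(0;\kappa,\de)$ by $C(0;\kappa,\de)$ from the start.

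Next, I would pass to polar coordinates. Writing $\xi = r\omega$ with $r>0$ and $\omega\in S^{m-1}$, the cone factorizes as $\{r\omega : 0<r<\de,\ \omega\in C(0;\kappa,1)\cap S^{m-1}\}$, so Fubini gives
\[
\int_{C(0;\kappa,\de)} k_\al(|\xi|,t)\,d\xi
= \sigma\bigl(C(0;\kappa,1)\cap S^{m-1}\bigr)\int_0^\de k_\al(r,t)\,r^{m-1}\,dr.
\]
The task reduces to showing that $\int_0^\de k_\al(r,t)r^{m-1}\,dr \to 1/\sigma(S^{m-1})$ as $t\to 0^+$, for every $0<\de\leq +\infty$.

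Condition (3) in Assumption \ref{assumption}, rewritten in polar coordinates, says exactly $\sigma(S^{m-1})\int_0^\infty k_\al(r,t)r^{m-1}\,dr=1$, so the integral over $(0,\infty)$ is $1/\sigma(S^{m-1})$ for every $t>0$. When $\de=+\infty$ there is nothing to prove. When $\de<+\infty$, condition (4) applied with $\rho=\de$ yields
\[
\sigma(S^{m-1})\int_\de^\infty k_\al(r,t)\,r^{m-1}\,dr=\int_{\R^m\sm B_\de(0)} k_\al(|\xi|,t)\,d\xi \xrightarrow[t\to 0^+]{} 0,
\]
so subtracting this tail from the full integral gives $\int_0^\de k_\al(r,t)r^{m-1}\,dr \to 1/\sigma(S^{m-1})$. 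Combining with the angular factor gives the claimed limit.

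No step is really an obstacle here; the only thing worth being careful about is the case $\de=+\infty$, where conditions (3) and (4) together are exactly what is needed (condition (4) becomes vacuous for the upper limit, and condition (3) delivers the value directly). The rotation invariance step is immediate from the fact that $k_\al$ depends on $\xi$ only through $|\xi|$ together with the change of variable $\xi \mapsto \Rot_{1m}(\theta)\xi$, whose Jacobian is $1$.
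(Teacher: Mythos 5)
Your proof is correct and follows essentially the same route as the paper: radial symmetry of $k_\al$ lets you write the cone integral as the fraction $\sigma(C(0;\kappa,1)\cap S^{m-1})/\sigma(S^{m-1})$ of the ball integral $\int_{B_\de(0)}k_\al(|\xi|,t)\,d\xi$, and conditions (3) and (4) force the latter to tend to $1$. The paper phrases the factorization through the rotation-invariance remark and the volume ratio rather than an explicit polar-coordinate computation, but the content is identical; your separate handling of $\de=+\infty$ is a small refinement the paper leaves implicit.
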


\begin{proof}
We remark that the conditions (3) and (4) imply
\[
\lim_{t\to 0^+} \int_{B_\de(0)} k_\al \( \lvert \xi \rvert ,t\) d\xi 
= \lim_{t\to 0^+} \( \int_{\R^m} -\int_{\R^m \sm B_\de (0)} \) k_\al \( \lvert \xi \rvert ,t\) d\xi 
=1 .
\]

Since we have 
\[
\Vol \( C_\theta \( 0;\kappa ,\de \) \) = \frac{\sigma \( C(0;\kappa ,1) \cap S^{m-1}\)}{\sigma\(S^{m-1}\)}  \Vol \( B_\de (0)\) ,
\]
the rotation invariance of our potential implies
\[
\int_{C_\theta (0;\kappa ,\de)} k_\al \( \lvert \xi \rvert ,t\) d\xi 
= \frac{\sigma \( C(0;\kappa ,1) \cap S^{m-1}\)}{\sigma\(S^{m-1}\)} \int_{B_\de (0)} k_\al \( \lvert \xi \rvert ,t\) d\xi
\to \frac{\sigma \( C(0;\kappa ,1) \cap S^{m-1}\)}{\sigma\(S^{m-1}\)} 
\]
as $t$ tends to $0^+$.
\end{proof}

\begin{prop}\label{estimationK2} 
Suppose that $k_\al$ satisfies the conditions $(3)$ and $(4)$ in Assumption \ref{assumption}. Let $X$ and $Y$ be bodies in $\R^m$. Suppose that the complement of $Y$ satisfies the uniform interior cone condition of aperture angle $\kappa$ and height $\de$. Let $R_0 >0$.  There exists a positive $\tau_2$ such that if $0<t<\tau_2$, then, for any $x \in X$ with $\dist (x,X^c) \geq R_0$ and $y \in Y^c$, we have $K_Y^{(\al)}(y,t) < K_X^{(\al)}(x,t)$.
\end{prop}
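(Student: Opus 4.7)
The plan is to exploit the concentration of the summable kernel $k_\al(\cdot,t)$ near the origin as $t\to 0^+$: by conditions (3) and (4) the kernel has unit mass that becomes localized in any prescribed neighborhood of the origin. Consequently the potential $K_Z^{(\al)}(z,t)$ is essentially controlled by how large a $k_\al$-neighborhood of $z$ lies inside $Z$. At the deep point $x$ with $\dist(x,X^c)\geq R_0$, the entire ball $B_{R_0}(x)$ sits inside $X$; at the exterior point $y\in Y^c$, the uniform interior cone condition carves out a cone $C(y)$ of vertex $y$, aperture $\kappa$, and height $\de$ which is disjoint from $Y$. The mismatch between these two shapes will yield the strict inequality.

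More precisely, I would first bound
\[
K_X^{(\al)}(x,t)\geq\int_{B_{R_0}(x)}k_\al(|x-\xi|,t)\,d\xi=\int_{B_{R_0}(0)}k_\al(|\xi|,t)\,d\xi,
\]
which converges to $1$ as $t\to 0^+$ by conditions (3) and (4), uniformly in $x$. Next, since $Y\cap C(y)=\emptyset$, condition (3) gives
\[
K_Y^{(\al)}(y,t)\leq\int_{\R^m\sm C(y)}k_\al(|y-\xi|,t)\,d\xi=1-\int_{C(y)}k_\al(|y-\xi|,t)\,d\xi.
\]
Translation and rotation reduce the last integral to $\int_{C(0;\kappa,\de)}k_\al(|\eta|,t)\,d\eta$, which by Lemma \ref{small-time_cone} tends to $\gamma:=\sigma(C(0;\kappa,1)\cap S^{m-1})/\sigma(S^{m-1})>0$ as $t\to 0^+$.

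Combining the two estimates,
\[
K_X^{(\al)}(x,t)-K_Y^{(\al)}(y,t)\geq\int_{B_{R_0}(0)}k_\al(|\xi|,t)\,d\xi-1+\int_{C(0;\kappa,\de)}k_\al(|\eta|,t)\,d\eta\longrightarrow\gamma>0,
\]
and this lower bound is independent of the particular admissible $x$ and $y$. Choosing $\tau_2>0$ so small that the right-hand side is already strictly positive on $(0,\tau_2)$ completes the argument. The only point that needs care is the uniformity in $y\in Y^c$, but since $C(y)$ is obtained from a fixed cone by a rigid motion and the kernel is radial, the two auxiliary integrals depend only on the fixed parameters $R_0,\kappa,\de$ and on $t$, so no uniform estimate beyond Lemma \ref{small-time_cone} is required.
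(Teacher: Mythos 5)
Your proof is correct and follows essentially the same route as the paper: lower-bound $K_X^{(\al)}(x,t)$ by the kernel mass over $B_{R_0}(0)$ (tending to $1$ by conditions (3)--(4)), upper-bound $K_Y^{(\al)}(y,t)$ by $1$ minus the kernel mass over the exterior cone (which tends to $\gamma>0$ by Lemma \ref{small-time_cone}), and pick $\tau_2$ small enough that the two uniform-in-$(x,y)$ bounds separate. The paper merely fixes the threshold $1-\gamma/2$ explicitly in each of the two estimates instead of phrasing it as a limit of the difference, which is the same argument.
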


\begin{proof}
By the conditions (3) and (4) for the kernel, we can choose a positive constant $\tau_{21}$ such that if $0<t<\tau_{21}$, then, for any point $x \in X$ with $\dist (x,X^c) \geq R_0$, we have
\[
K_X^{(\al)}(x,t) \geq K_{B_{R_0}(0)}^{(\al)} (0,t) > 1-\frac{\sigma \( C(0;\kappa ,1) \cap S^{m-1} \)}{2\sigma \( S^{m-1}\)} ,
\]
where the cone $C(0;\kappa ,1)$ is defined in \eqref{cone0}.

On the other hand, we can choose a positive constant $\tau_{22}$ such that if $0<t<\tau_{22}$, then, for any $y \in Y^c$, the uniform interior cone condition of $Y^c$ and Lemma \ref{small-time_cone} imply
\[
K_Y^{(\al)} (y,t) \leq K_{\R^m \sm C(0)}^{(\al)}(0,t) =1- K_{C(0)}^{(\al)}(0,t) <1-\frac{\sigma \( C(0;\kappa ,1) \cap S^{m-1} \)}{2\sigma \( S^{m-1}\)} .
\]

Taking $\tau_2 =\min \{ \tau_{21},\ \tau_{22} \}$, the proof is completed.
\end{proof}

\begin{thm}\label{estimationK} 
Let $\al \leq 0$. Suppose that $k_\al$ satisfies all the conditions in Assumption \ref{assumption}. Let $X$ and $Y$ be bodies in $\R^m$. Suppose that the complement of $Y$ satisfies the uniform interior cone condition of aperture angle $\kappa$ and height $\de$. Let $R_0>0$, and $\tilde{R} = \tilde{R} (\al ,\kappa ,\de ,\diam Y, R_0 )$ be given in Lemma \ref{E}. For any $0<b<1$, there exists a positive $\tau$ such that if $0 < t<\tau$, then, for any $x \in X$ with $\dist (x,X^c)\geq R_0$ and $y \in \R^m$ with $\dist (y,Y^c) \leq b\tilde{R}$, we have $K_Y^{(\al)}(y,t) < K_X^{(\al)}(x,t)$.
\end{thm}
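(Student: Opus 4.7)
The plan is to observe that the statement is a direct combination of the two preceding Propositions \ref{estimationK1} and \ref{estimationK2}, once the admissible set of $y$'s is split according to whether $y$ lies inside $Y$ or in the (open) complement $Y^c$. Since $Y$ is closed, any $y \in \R^m$ satisfies exactly one of the two cases, and boundary points $y \in \pd Y \subset Y$ fall into the first case with $\dist(y, Y^c) = 0 \leq b\tilde{R}$.

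For $y \in Y$ with $\dist(y, Y^c) \leq b\tilde{R}$, I would first invoke Lemma \ref{cone_condition} to upgrade the theorem's hypothesis (the uniform interior cone condition on $Y^c$) to the uniform boundary inner cone condition on $Y^c$, for the same cone parameters $\kappa$ and $\de$. Since conditions (1) and (2) of Assumption \ref{assumption} are in force, Proposition \ref{estimationK1} applies and produces a positive $\tau_1$ such that for $0 < t < \tau_1$, the inequality $K_Y^{(\al)}(y,t) < K_X^{(\al)}(x,t)$ holds whenever $x \in X$ has $\dist(x, X^c) \geq R_0$. Note that the $\tilde{R}$ produced by Lemma \ref{E} is the same in both theorems since all its five parameters $(\al, \kappa, \de, \diam Y, R_0)$ coincide.

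For $y \in Y^c$, conditions (3) and (4) of Assumption \ref{assumption} together with the uniform interior cone condition on $Y^c$ are precisely the hypotheses of Proposition \ref{estimationK2}, which delivers a positive $\tau_2$ such that for $0 < t < \tau_2$, the desired inequality holds for all $x \in X$ with $\dist(x, X^c) \geq R_0$.

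Setting $\tau := \min\{\tau_1, \tau_2\}$ then yields the conclusion for all $0 < t < \tau$ simultaneously across the two cases. There is no substantive obstacle, since the hard analytic work is already carried out in the two propositions; the only task is the bookkeeping verification that the hypotheses of both propositions are consistent with the theorem's assumptions and produce matching thresholds $R_0$ and $b\tilde{R}$, which indeed they do.
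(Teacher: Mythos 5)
Your proposal is correct and matches the paper's own proof exactly: the paper likewise invokes Lemma \ref{cone_condition} to pass from the interior cone condition to the boundary inner cone condition, then takes $\tau = \min\{\tau_1, \tau_2\}$ with $\tau_1$ from Proposition \ref{estimationK1} (handling $y \in Y$) and $\tau_2$ from Proposition \ref{estimationK2} (handling $y \in Y^c$). Your extra care in noting that the two cases exhaustively cover $\{y : \dist(y, Y^c) \leq b\tilde{R}\}$ and that the $\tilde{R}$ parameters match is sound bookkeeping, not a deviation.
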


\begin{proof}
Thanks to Lemma \ref{cone_condition}, the complement of $Y$ satisfies the uniform boundary inner cone condition of aperture angle $\kappa$ and height $\de$. Let $\tau_1$ and $\tau_2$ be as in Propositions \ref{estimationK1} and \ref{estimationK2}, respectively. Taking $\tau =\min \{ \tau_1,\ \tau_2 \}$, we obtain the conclusion.
\end{proof}

\begin{cor}\label{centerK}
Let $\al \leq 0$ and $k_\al$ be as in Theorem \ref{estimationK}. Let $\Ome$ be a body in $\R^m$ whose complement satisfies the uniform interior cone condition of aperture angle $\kappa$ and height $\de$. Let $\tilde{R} = \tilde{R} ( \al ,\kappa, \de , \diam \Ome ,R_\infty (\Ome ))$ be given in Lemma \ref{E}. For any $0<b<1$, there exits a positive constant $\tau$ such that if $0<t<\tau$, then any $k_\al$-center of $\Ome$ at time $t$ is contained in the intersection $(\Ome \sim b\tilde{R}B^m) \cap Uf(\Ome )$.
\end{cor}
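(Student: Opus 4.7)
My plan is to derive the corollary as a direct consequence of two containments: every $k_\al$-center at time $t$ must lie in the inner-parallel body $\Ome \sim b\tilde{R}B^m$ (for small $t$), and every $k_\al$-center must lie in the minimal unfolded region $Uf(\Ome)$. Intersecting these gives the claim.

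For the first containment, I will apply Theorem \ref{estimationK} with $X = Y = \Ome$ and $R_0 = R_\infty(\Ome)$. By definition of the inradius, we may pick a point $x^\ast \in \Ome$ realizing $\dist(x^\ast, \Ome^c) = R_\infty(\Ome) = R_0$. The theorem then furnishes a threshold $\tau > 0$ such that whenever $0 < t < \tau$ and $y \in \R^m$ satisfies $\dist(y, \Ome^c) \leq b\tilde{R}$, one has $K_\Ome^{(\al)}(y, t) < K_\Ome^{(\al)}(x^\ast, t)$. Consequently such a $y$ cannot be a maximizer of $K_\Ome^{(\al)}(\cdot, t)$. Taking the contrapositive, every $k_\al$-center $c$ must satisfy $\dist(c, \Ome^c) > b\tilde{R}$, so in particular $c$ lies in the interior of $\Ome$ and belongs to $\Ome \sim b\tilde{R}B^m$.

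For the second containment, I will appeal to Proposition \ref{ufK}: since for each fixed $t > 0$ the kernel $k_\al(\cdot, t)$ is strictly decreasing and satisfies $(C^0_\be)$ by Assumption \ref{assumption}(1), the proposition applies pointwise in $t$ and places every $k_\al$-center of $\Ome$ at time $t$ inside $Uf(\Ome)$.

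The combination of these two facts, valid simultaneously once $0 < t < \tau$, yields $\mathcal{K}_{\Ome, t} \subset (\Ome \sim b\tilde{R}B^m) \cap Uf(\Ome)$. I do not anticipate a real obstacle here: the hard analytic work has already been carried out in Theorem \ref{estimationK} and Proposition \ref{ufK}, so the corollary is essentially a bookkeeping step, with the only point requiring care being that the moving-plane argument of Proposition \ref{ufK} is applied for each frozen time $t$ rather than for a time-independent kernel.
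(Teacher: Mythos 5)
Your proof is correct and follows essentially the same route as the paper: invoke Proposition \ref{ufK} to place all $k_\al$-centers in $Uf(\Ome)$, invoke Theorem \ref{estimationK} with $X=Y=\Ome$ and $R_0=R_\infty(\Ome)$ to exclude the collar $\{y : \dist(y,\Ome^c)\le b\tilde R\}$ for $t<\tau$, and intersect. The paper's proof is a two-line version of the same argument; your version just makes explicit the choice of $x^\ast$ and the contrapositive step.
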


\begin{proof}
Thanks to Proposition \ref{ufK}, all the $k_\al$-centers are contained in the minimal unfolded region of $\Ome$. Hence, combining Theorem \ref{estimationK}, we obtain the conclusion.
\end{proof}

\begin{ex}\label{dumbbellK}
{\rm Let $\al \leq 0$. Suppose that $k_\al$ satisfies all the conditions in Assumption \ref{assumption}. Let $\ep$, $\Ome_\ep$, $C$ and $\tilde{R}$ be as in Example \ref{dumbbellV}. Fix an $0< \ep < \tilde{R}$. 

Corollary \ref{centerK} guarantees the existence of a positive constant $\tau$ such that if $0<t <\tau$, then any $k_\al$-center of the body $\Ome_\ep$ at time $t$ belongs to the disjoint union of the intervals $( [ -2 ,-1 -\sqrt{\tilde{R}^2-\ep^2} ] \cup [ 1+ \sqrt{\tilde{R}^2-\ep^2} ,2 ] ) \times \{ 0 \}^{m-1}$. Radial symmetry of the kernel of $K_{\Ome_\ep}^{(\al)} (\cdot ,t)$ guarantees that each interval has an $k_\al$-center. In particular, the potential $K_{\Ome_\ep}^{(\al)}(\cdot ,t)$ has at least two maximizers for any sufficiently small $t$.
}
\end{ex}

\begin{ex}\label{annulusK}
{\rm Let $\al \leq 0$. Suppose that $k_\al$ satisfies all the conditions in Assumption \ref{assumption}. Let $\Ome$, $C$ and $\tilde{R}$ be as in Example \ref{annulusV}. 

Corollary \ref{centerK} guarantees the existence of a positive constant $\tau$ such that if $0<t<\tau$, then any $k_\al$-center of $\Ome$ at time $t$ belongs to the annulus $B_{2}(0) \sm \c{B}_{1+\tilde{R}} (0)$. Radial symmetry of the kernel of $K_\Ome^{(\al)}(\cdot , t)$ guarantees the existence of a positive constant $1+\tilde{R} \leq \rho (t) \leq 2$ such that the set of $k_\al$-centers of $\Ome$ at time $t$ contains the sphere $\rho (t) S^{m-1}$ for any sufficiently small $t$.
}
\end{ex}

\begin{cor}\label{uniqueK} 
Let $\al \leq 0$ and $k_\al$ be as in Theorem \ref{estimationK}. Let $\Ome$ be a convex body in $\R^m$. Let $\tilde{R} = \tilde{R} (\al , \pi , +\infty , \diam \Ome , R_\infty (\Ome ))$ be given in Lemma \ref{E}. Let $0<b<1$, and $\Ome'= ( \Ome \sim b\tilde{R}B^m) \cap Uf(\Ome )$. Suppose the existence of a positive constant $\tau'$ such that, for any $0<t <\tau'$, $k_\al(r,t)r^{m-1}$ is decreasing on the interval $[d(\Ome ,\Ome'),D(\Ome ,\Ome')]$ with respect to $r$. There exists a positive constant $\tau \leq \tau'$ such that, for any $0<t <\tau$, $K_\Ome^{(\al )} (\cdot ,t)$ is strictly concave on $\Ome'$. In particular, $\Ome$ has a unique $k_\al$-center at time $0< t <\tau$.
\end{cor}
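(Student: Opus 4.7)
The plan is to combine the strict concavity criterion of Proposition \ref{concavity} with the localization of $k_\al$-centers supplied by Corollary \ref{centerK}. The main work is to verify that $\Ome'$ fits the hypotheses of Proposition \ref{concavity}; once strict concavity on $\Ome'$ is in hand, uniqueness follows from a standard midpoint argument.

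First I would check that $\Ome'$ is a convex body contained in the interior of $\Ome$. The inner-parallel body $\Ome \sim b\tilde{R}B^m$ is convex because $\Ome$ is convex, and $Uf(\Ome)$ is convex by the Remark following Example \ref{uf_triangle}, so their intersection $\Ome'$ is convex. Any $y \in \Ome'$ satisfies $\dist(y,\Ome^c) \geq b\tilde{R} > 0$, hence $\Ome' \subset \c{\Ome}$. With $\Ome'$ so identified, fix any $0 < t < \tau'$: by hypothesis $k_\al(r,t)r^{m-1}$ is decreasing in $r$ on $[d(\Ome,\Ome'), D(\Ome,\Ome')]$, and $k_\al(\cdot,t)$ is positive and satisfies $(C_\be^0)$ by Assumption \ref{assumption}(1). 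Proposition \ref{concavity} then yields strict concavity of $K_\Ome^{(\al)}(\cdot,t)$ on $\Ome'$.

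Next I would apply Corollary \ref{centerK}. Since $\Ome$ is convex, its complement satisfies the uniform interior cone condition for a half-space (aperture $\kappa=\pi$, height $\de=+\infty$), which is precisely why $\tilde{R}$ in the statement is defined with those parameters. Corollary \ref{centerK} therefore supplies a positive constant $\tau_1$ such that every $k_\al$-center of $\Ome$ at time $t$ lies in $\Ome'$ whenever $0 < t < \tau_1$; in particular $\Ome'$ is non-empty. Setting $\tau = \min\{ \tau_1 , \tau' \}$ and $0 < t < \tau$, the function $K_\Ome^{(\al)}(\cdot,t)$ attains its maximum only at points of $\Ome'$, where it is strictly concave. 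If $c_1 \neq c_2$ were two $k_\al$-centers, convexity of $\Ome'$ would give $(c_1+c_2)/2 \in \Ome'$, and strict concavity would force $K_\Ome^{(\al)}((c_1+c_2)/2,t) > K_\Ome^{(\al)}(c_1,t)$, contradicting maximality of $c_1$. Hence the $k_\al$-center is unique for $0 < t < \tau$.

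The main obstacle is essentially bookkeeping: one must cleanly verify that $\Ome'$ meets the "convex body inside $\c{\Ome}$" requirement of Proposition \ref{concavity} and that the standing assumptions on $k_\al$ imply the positivity and continuity conditions needed there; after that, the two cited ingredients combine mechanically.
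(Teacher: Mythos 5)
Your proof is correct and takes essentially the same approach as the paper's, which simply observes that $\Ome'$ is convex and contained in $\c{\Ome}$ and then cites Proposition \ref{concavity}. You make explicit the localization step via Corollary \ref{centerK} (needed to pass from strict concavity on $\Ome'$ to global uniqueness of the maximizer) that the paper's one-line proof leaves implicit in the choice $\tau \leq \tau'$.
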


\begin{proof}
Since $(\Ome \sim b\tilde{R}B^m) \cap Uf(\Ome )$ is convex and contained in the interior of $\Ome$, Propositions \ref{concavity} guarantees the conclusion.
\end{proof}

\begin{thm}\label{behaviorK1}
Let $\al \leq 0$. Suppose that $k_\al$ satisfies all the conditions in Assumption \ref{assumption}. Let $\Ome$ be a body in $\R^m$ whose complement satisfies the uniform interior cone condition of aperture angle $\kappa$ and height $\de$. For any decreasing sequence $\{ t_\ell \}$ with zero limiting value and any $k_\al$-center $c_\al (t_\ell )$ at time $t_\ell$, the distance between $c_\al (t_\ell )$ and the set of $r^{\al -m}$-centers tends to zero as $\ell$ goes to $+\infty$.
\end{thm}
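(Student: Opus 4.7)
The plan is to argue by contradiction, implementing the third step in the scheme outlined in the introduction. By Lemma \ref{cone_condition}, the uniform interior cone condition for $\Ome^c$ also yields the uniform boundary inner cone condition, so Theorem \ref{existenceV} guarantees $\mathcal{V}_\Ome (\al )\neq \emptyset$. Applying Corollary \ref{centerK} for each $b\in (0,1)$ shows that for all sufficiently large $\ell$ the $k_\al$-center $c_\al (t_\ell)$ lies in the compact set $(\Ome \sim b \tilde{R} B^m)\cap Uf(\Ome) \subset \c{\Ome}$. If the conclusion failed, there would exist $\eta >0$ and a subsequence with $\dist (c_\al (t_\ell),\mathcal{V}_\Ome (\al))\geq \eta$; after a further compactness extraction, $c_\al (t_\ell)\to c^*$, and letting $b\to 1^-$ forces $c^* \in \Ome \sim \tilde{R} B^m \subset \c{\Ome}$ with $\dist (c^*,\mathcal{V}_\Ome (\al))\geq \eta$. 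Thus I can fix $x^* \in \mathcal{V}_\Ome (\al)$ satisfying $V_\Ome^{(\al)} (x^* ) > V_\Ome^{(\al)} (c^* )$, which I aim to turn into a violation of the maximality of $c_\al (t_\ell)$.

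The key tool is the renormalization device of Proposition \ref{without_limit}. Fix $\ep >0$ so small that $B_\ep (x^* )$, $B_\ep (c^* )$, and $B_\ep (c_\al (t_\ell))$ (for all large $\ell$) are contained in $\Ome$. By radial symmetry of $k_\al$, for any such base point $y$,
\[
K_\Ome^{(\al )} (y,t) = \psi (t) \int_{\Ome \sm B_\ep (y)} \bar{k}_\al \( \lvert y-\xi \rvert ,t\) d\xi +\int_{B_\ep (0)} k_\al \( \lvert \xi \rvert ,t\) d\xi ,
\]
and the second summand is independent of $y$. Consequently, the sign of $K_\Ome^{(\al)} (x^* ,t_\ell) - K_\Ome^{(\al)} (c_\al (t_\ell ),t_\ell)$ coincides with that of
\[
I_\ell :=\int_{\Ome \sm B_\ep (x^*)} \bar{k}_\al \(  \lvert x^* -\xi \rvert ,t_\ell \) d\xi -\int_{\Ome \sm B_\ep (c_\al (t_\ell))} \bar{k}_\al \( \lvert c_\al (t_\ell)-\xi \rvert ,t_\ell\) d\xi .
\]
Using Assumption \ref{assumption}(2) and dominated convergence on the integrand (which is now kept away from the singularity by the removal of $B_\ep$), I expect
\[
\lim_{\ell \to \infty } I_\ell =\int_{\Ome \sm B_\ep (x^*)}  \lvert x^* -\xi \rvert^{\al -m} d\xi -\int_{\Ome \sm B_\ep (c^* )} \lvert c^* -\xi \rvert^{\al -m} d\xi ,
\]
and Proposition \ref{without_limit} makes the two divergent $\ep$-constants cancel so that this limit equals $V_\Ome^{(\al )} (x^*) -V_\Ome^{(\al )} (c^* ) >0$. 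Hence $K_\Ome^{(\al)}(x^*, t_\ell) > K_\Ome^{(\al)}(c_\al (t_\ell) , t_\ell)$ for all large $\ell$, the desired contradiction.

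The main obstacle is justifying $\lim_{\ell \to \infty} I_\ell$ when the reference point $c_\al (t_\ell)$ itself moves with $\ell$. For the fixed point $x^*$, the passage is a direct application of Assumption \ref{assumption}(2); for the moving point, I need uniformity of the convergence $\bar{k}_\al (r,t)\to r^{\al -m}$ on compact subintervals of $(0,+\infty )$ bounded away from zero. I would extract this uniformity from the monotonicity of $\bar{k}_\al (\cdot ,t)$ in the radial variable together with the pointwise convergence (a Dini-type argument on the annulus $\{ \ep \leq \lvert y-\xi \rvert \leq \diam \Ome\}$), or alternatively by constructing a dominating function from the growth condition $(C_\be^0)$ in Assumption \ref{assumption}(1). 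Once that uniformity is in hand, the replacement of the moving domain $\Ome \sm B_\ep (c_\al (t_\ell))$ by $\Ome \sm B_\ep (c^*)$ costs only an $o(1)$ term by the absolute continuity of the Riesz integrand on the interior region, closing the argument.
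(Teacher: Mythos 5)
Your argument follows the paper's own proof closely: confine the centers to a compact inner-parallel body via Corollary \ref{centerK}, extract a convergent subsequence, strip a small ball $B_\ep$ from both reference points using the radial symmetry of $k_\al$ and Proposition \ref{without_limit}, and pass to the $t\to 0^+$ limit to contradict the maximality of $c_\al(t_\ell)$. One small technical caveat worth noting: a Dini-type argument based on monotonicity of $\bar k_\al(\cdot,t)$ in the \emph{radial} variable does not by itself give uniformity of the convergence $\bar k_\al(r,t)\to r^{\al-m}$ (Dini requires monotonicity in the parameter $t$, which Assumption \ref{assumption} does not provide); the workable route is the one you give second and the one the paper tacitly uses — the decreasing-in-$r$ property from Assumption \ref{assumption}(1) together with pointwise convergence at $r=\ep$ yields a bound $\bar k_\al(r,t)\le 2\ep^{\al-m}$ for $r\ge\ep$ and $t$ small, after which translating to the origin and applying dominated convergence handles both the moving kernel and the moving domain.
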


\begin{proof}
Thanks to Corollary \ref{centerK}, we may assume that any $k_\al$-center at time $t_\ell$ belongs to the inner-parallel body of $\Ome$ of radius $(1/2)\tilde{R}$, where $\tilde{R} = \tilde{R} (\al , \kappa ,\de ,\diam \Ome , R_\infty (\Ome))$ is given in Lemma \ref{E}. Since the inner-parallel body is compact, without loss of generality, we assume that $\{ c_\al (t_\ell)\}$ converges to a point $c_\al$. In order to show that $c_\al$ is an $r^{\al -m}$-center of $\Ome$, we assume that $c_\al$ is not any $r^{\al-m}$-center, and let us derive a contradiction.

Fix an arbitrary $0<\ep <  (1/2)\tilde{R}$. Then, for any point $x$ in the inner-parallel body of $\Ome$ of radius $(1/2)\tilde{R}$, we have
\begin{align*}
K_\Ome^{(\al)} (x,t) 
&=\( \int_{\Ome \sm B_\ep (x)} + \int_{B_\ep(x)}\) k_\al \( \lvert x -\xi\rvert ,t\) d\xi \\
&=\int_{\Ome \sm B_\ep (x)} k_\al \( \lvert x -\xi \rvert,t\) d\xi +\sigma \( S^{m-1} \) \int_0^\ep k_\al \( r,t\) r^{m-1}dr .
\end{align*}
Therefore, the maximum value of $K_\Ome^{(\al)}(\cdot,t_\ell)$ is attained at $c_\al (t_\ell)$ if and only if that of the function
\[
\Ome \sim  \frac{1}{2}\tilde{R}B^m \ni x \mapsto \int_{\Ome \sm B_\ep (x)} \bar{k}_\al \( \lvert x -\xi \rvert,t_\ell \) d\xi \in \R
\]
is attained at $c_\al (t_\ell)$.

Let $p$ be an $r^{\al-m}$-center of $\Ome$. Thanks to the first and second conditions in Assumption \ref{assumption}, there exists a large natural number $L$ such that, for any $\ell \geq L$, the following inequalities hold:
\begin{align*}
\lvert \int_{\Ome \sm B_\ep \( c_\al \( t_\ell \)\)} \bar{k}_\al \( \lvert c_\al \( t_\ell \) -\xi \rvert ,t_\ell \) d\xi 
-\int_{\Ome \sm B_\ep \( c_\al \)} \lvert c_\al -\xi \rvert^{\al-m}d\xi \rvert
\leq \frac{V_\Ome^{(\al)}(p) -V_\Ome^{(\al)} \( c_\al\)}{3} ,\\
\lvert \int_{\Ome \sm B_\ep (p)} \bar{k}_\al \( \lvert p-\xi \rvert ,t_\ell \) d\xi 
-\int_{\Ome \sm B_\ep (p)} \lvert p -\xi \rvert^{\al-m}d\xi \rvert
\leq \frac{V_\Ome^{(\al)}(p) -V_\Ome^{(\al)} \( c_\al\)}{3} .
\end{align*}
Hence, using Proposition \ref{without_limit}, we obtain
\begin{align*}
0
&\leq \int_{\Ome \sm B_\ep \( c_\al \( t_\ell \)\)} \bar{k}_\al \( \lvert c_\al \( t_\ell \) -\xi \rvert ,t_\ell \) d\xi 
-\int_{\Ome \sm B_\ep (p)} \bar{k}_\al \( \lvert p-\xi \rvert ,t_\ell \) d\xi  \\
&<\( \int_{\Ome \sm B_\ep \( c_\al \)} \lvert c_\al -\xi \rvert^{\al-m}d\xi +\frac{V_\Ome^{(\al)}(p) -V_\Ome^{(\al)} \( c_\al\)}{3} \) \\
&\quad -\( \int_{\Ome \sm B_\ep (p)} \lvert p -\xi \rvert^{\al-m}d\xi - \frac{V_\Ome^{(\al)}(p) -V_\Ome^{(\al)} \( c_\al\)}{3}\) \\
&=- \frac{V_\Ome^{(\al)}(p) -V_\Ome^{(\al)} \( c_\al\)}{3} \\
&<0,
\end{align*}
which is a contradiction.
\end{proof}

\begin{cor}\label{behaviorK2} 
Let $\al \leq 0$ and $k_\al$ be as in Theorem \ref{behaviorK1}. Let $\Ome$ be a convex body. The set of $k_\al$-centers at time $t$ converges to the set of $r^{\al -m}$-centers as $t$ tends to $0^+$ with respect to the Hausdorff distance.
\end{cor}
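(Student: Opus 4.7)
The plan is to derive this corollary directly from Theorem \ref{uniquenessV} and Theorem \ref{behaviorK1}, with only a small bit of extra work to reduce Hausdorff convergence to pointwise convergence, using that the limit set is a singleton.

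First, I would invoke Theorem \ref{uniquenessV}. Since $\al \leq 0 \leq 1$ and $\Ome$ is convex, $V_\Ome^{(\al)}$ is strictly concave on $\Ome$, hence admits a unique maximizer. Denote this unique $r^{\al-m}$-center by $c_\al$, so $\mathcal{V}_\Ome(\al) = \{c_\al\}$. Next, I would check that Theorem \ref{behaviorK1} is applicable: the complement of a convex body satisfies the uniform interior cone condition of aperture angle $\pi$ and height $+\infty$. Indeed, for any $x \in \Ome^c$, letting $q \in \pd \Ome$ be a nearest boundary point and $n$ the outward unit normal of a supporting hyperplane of $\Ome$ at $q$, the open half-space $\{y : (y-x)\cdot n > 0\}$ (a degenerate cone of aperture $\pi$ and infinite height with vertex $x$) is contained in $\Ome^c$.

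Having verified the hypotheses, I would then rewrite the Hausdorff distance in the form it takes when the target set is a singleton. Since $\mathcal{K}_\Ome(t)$ is non-empty for each $t>0$ (the kernel $k_\al(\cdot ,t)$ is strictly decreasing and satisfies $(C^0_\be)$, giving a maximizer), one has
\[
d_H\bigl(\mathcal{K}_\Ome(t) ,\mathcal{V}_\Ome(\al)\bigr)
= \max\left\{ \sup_{c \in \mathcal{K}_\Ome(t)} \lvert c-c_\al \rvert ,\ \inf_{c\in \mathcal{K}_\Ome(t)} \lvert c_\al -c \rvert \right\}
= \sup_{c \in \mathcal{K}_\Ome(t)} \lvert c- c_\al \rvert ,
\]
because the infimum is bounded above by the supremum.

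Finally, I would conclude by a simple contradiction argument. Suppose $d_H(\mathcal{K}_\Ome(t), \{c_\al\}) \not\to 0$ as $t\to 0^+$. Then there exist $\ep_0 >0$, a decreasing sequence $t_\ell \to 0^+$, and $c_\ell \in \mathcal{K}_\Ome(t_\ell)$ with $\lvert c_\ell -c_\al \rvert \geq \ep_0$. But Theorem \ref{behaviorK1} asserts that $d(c_\ell ,\mathcal{V}_\Ome(\al))=\lvert c_\ell -c_\al \rvert \to 0$, a contradiction. There is essentially no serious obstacle here: the hard analytic work has already been done in Theorems \ref{uniquenessV} and \ref{behaviorK1}, and the role of this corollary is simply to package those two facts in the convex case where the limit set collapses to a point.
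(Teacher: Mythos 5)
Your proof is correct and follows essentially the same route as the paper: Theorem \ref{uniquenessV} collapses $\mathcal{V}_\Ome(\al)$ to a singleton, and Theorem \ref{behaviorK1} then forces Hausdorff convergence. The extra steps you supply (verifying that the complement of a convex body satisfies the uniform interior cone condition with $\kappa=\pi$, $\de=+\infty$, and spelling out why convergence of distances implies Hausdorff convergence to a singleton) are exactly the details the paper's terse two-sentence proof leaves implicit, and they are carried out correctly.
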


\begin{proof}
Theorem \ref{uniquenessV} guarantees the uniqueness of an $r^{\al-m}$-center of $\Ome$. Hence Theorem \ref{behaviorK1} implies the conclusion.
\end{proof}
\section{Applications to the Poisson integral}
Let $\Ome$ be a body (the closure of a bounded open set) in $\R^m$. In this section, we apply the results in the previous section to the Poisson integral for the upper half-space. In other words, we consider the small-height behavior of illuminating centers of a body. 

For the Poisson integral, the kernel in \eqref{Kalpha} is give by
\begin{equation}\label{kernelA}
k_{-1}(r,h)
=\psi(h) \bar{k}_{-1}(r,h) 
=\frac{2h}{\sigma_m \( S^m \)} \( r^2 +h^2\)^{-(m+1)/2} .
\end{equation}
From the facts \eqref{total_angle} and \eqref{local_angle}, the kernel \eqref{kernelA} exactly satisfies the conditions in Assumption \ref{assumption}.

\begin{prop}\label{estimationA1} 
Let $X$ and $Y$ be bodies in $\R^m$. Suppose that the complement of $Y$ satisfies the uniform boundary inner cone condition of aperture angle $\kappa$ and height $\de$. Let $R_0 >0$, and $\tilde{R}=\tilde{R} (-1, \kappa , \de ,\diam Y , R_0)$ be given in Lemma \ref{E}. For any $0<b<1$, there exists a positive $h_1$ such that if $0<h<h_1$, then, for any $x \in X$ with $\dist (x,X^c)\geq R_0$ and $y \in Y$ with $\dist (y,Y^c) \leq b\tilde{R}$, we have $A_Y(y,h) < A_X(x,h)$.

{\rm (This fact follows from Proposition \ref{estimationK1}.)}
\end{prop}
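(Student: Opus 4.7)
The plan is straightforward: verify that the Poisson kernel $k_{-1}(r,h)$ given in \eqref{kernelA} satisfies conditions (1) and (2) of Assumption \ref{assumption} with $\al = -1$, and then invoke Proposition \ref{estimationK1} with the time parameter $t$ replaced by the height parameter $h$, setting $h_1$ to be the constant $\tau_1$ produced there.

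To check condition (1), I observe that $r \mapsto (r^2+h^2)^{-(m+1)/2}$ is strictly decreasing on $(0,+\infty)$ for each fixed $h>0$, so $k_{-1}(\cdot,h)$ is strictly decreasing. Moreover, $k_{-1}(r,h) \to (2h/\sigma_m(S^m)) h^{-(m+1)}$ as $r\to 0^+$, which is finite, so $k_{-1}(\cdot,h) = O(1)$ as $r \to 0^+$ and hence satisfies $(C_\be^0)$ for any $\be > m$. For condition (2), the factorization $k_{-1}(r,h) = \psi(h)\bar{k}_{-1}(r,h)$ with $\psi(h) = 2h/\sigma_m(S^m)$ and $\bar{k}_{-1}(r,h) = (r^2+h^2)^{-(m+1)/2}$ is already displayed in \eqref{kernelA}, and for each fixed $r>0$ we have $\bar{k}_{-1}(r,h) \to r^{-(m+1)} = r^{\al-m}$ as $h \to 0^+$, with $\al = -1$.

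Once these routine checks are in place, applying Proposition \ref{estimationK1} to $X$, $Y$, $R_0$, $\tilde R$ and $b$ as in the statement immediately delivers the inequality $A_Y(y,h) < A_X(x,h)$ for $0 < h < h_1$, since $A_\Ome(\cdot,h) = K_\Ome^{(-1)}(\cdot,h)$ in the notation of \eqref{Kalpha}. There is no substantive obstacle here; the proposition is really just a transcription of Proposition \ref{estimationK1} into the language of solid angles. The genuinely nontrivial content lies upstream, in Lemmas \ref{estimationK11} and \ref{estimationK12}, which is why the parenthetical remark in the statement indicates that the fact follows from Proposition \ref{estimationK1}.
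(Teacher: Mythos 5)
Your proof is correct and follows essentially the same route as the paper, which itself treats this proposition as a direct corollary of Proposition \ref{estimationK1} after noting that the Poisson kernel \eqref{kernelA} satisfies the relevant parts of Assumption \ref{assumption}. One negligible slip: $A_\Ome(\cdot,h)$ equals $\frac{\sigma_m(S^m)}{2}K_\Ome^{(-1)}(\cdot,h)$ rather than $K_\Ome^{(-1)}(\cdot,h)$ itself, but as this is a fixed positive constant multiple the inequality transfers unchanged.
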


\begin{prop}\label{estimationA2} 
Let $X$ and $Y$ be bodies in $\R^m$. Suppose that the complement of $Y$ satisfies the uniform interior cone condition of aperture angle $\kappa$ and height $\de$. Let $R_0>0$. There exists a positive $h_2$ such that if $0<h<h_2$, then, for any $x \in X$ with $\dist (x,X^c)\geq R_0$ and $y \in Y^c$, we have $A_Y(y,h) < A_X(x,h)$.

{\rm (This fact follows from Proposition \ref{estimationK2}.)}
\end{prop}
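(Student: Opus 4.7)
The plan is to recognize that Proposition \ref{estimationA2} is essentially a direct corollary of Proposition \ref{estimationK2} specialized to the Poisson-type kernel $k_{-1}$, once we confirm that the normalizing conditions of Assumption \ref{assumption} hold. First I would record the identity
\[
K_\Ome^{(-1)}(x,h) \;=\; \int_\Ome k_{-1}\!\bigl(|x-\xi|,h\bigr)\,d\xi \;=\; \frac{2}{\sigma_m\!\left(S^m\right)}\,A_\Ome(x,h),
\]
so that the inequality $A_Y(y,h) < A_X(x,h)$ is equivalent to $K_Y^{(-1)}(y,h) < K_X^{(-1)}(x,h)$ (the multiplicative constant is positive and $h$-independent).

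Next I would verify that the kernel in \eqref{kernelA} satisfies conditions (3) and (4) in Assumption \ref{assumption}, which are the only hypotheses required by Proposition \ref{estimationK2}. Condition (3) is immediate from the author's observation \eqref{total_angle}:
\[
\int_{\R^m} k_{-1}\!\bigl(|\xi|,h\bigr)\,d\xi \;=\; \frac{2}{\sigma_m\!\left(S^m\right)}\,A_{\R^m}(0,h) \;=\; 1.
\]
For condition (4), I would use condition (3) together with \eqref{local_angle} applied to the body $B_\rho(0)$: for any $\rho>0$, since $0$ lies in the interior of $B_\rho(0)$,
\[
\int_{\R^m \sm B_\rho(0)} k_{-1}\!\bigl(|\xi|,h\bigr)\,d\xi
\;=\; 1 - \frac{2}{\sigma_m\!\left(S^m\right)}\,A_{B_\rho(0)}(0,h)
\;\longrightarrow\; 1-1 \;=\; 0
\]
as $h \to 0^+$. (The author has essentially remarked on this right after \eqref{kernelA}, so no new work is needed.)

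With both conditions in place, I would invoke Proposition \ref{estimationK2} with $\al = -1$ and with the very same data $X$, $Y$, $\kappa$, $\de$, $R_0$. It produces a positive $\tau_2$ such that, whenever $0<h<\tau_2$, any $x \in X$ with $\dist(x,X^c) \ge R_0$ and any $y \in Y^c$ satisfy $K_Y^{(-1)}(y,h) < K_X^{(-1)}(x,h)$. Setting $h_2 := \tau_2$ and using the identity from step one, this is precisely the desired inequality $A_Y(y,h) < A_X(x,h)$. There is no substantive obstacle here: the entire content of the proposition is the bookkeeping that $A_\Ome$ fits into the framework of Section 4 as the case $\al=-1$ with the Poisson kernel, and the preceding machinery does the work.
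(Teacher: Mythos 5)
Your proposal is correct and takes essentially the same approach as the paper: the paper's entire proof is the parenthetical citation of Proposition \ref{estimationK2} together with the remark after \eqref{kernelA} that the Poisson kernel satisfies Assumption \ref{assumption}, and you reproduce exactly this reduction. You merely spell out the verification of conditions $(3)$ and $(4)$ via \eqref{total_angle} and \eqref{local_angle}, which the paper leaves implicit.
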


\begin{prop}\label{estimationA} 
Let $X$ and $Y$ be bodies in $\R^m$. Suppose that the complement of $Y$ satisfies the uniform interior cone condition of aperture angle $\kappa$ and height $\de$. Let $R_0>0$, and $\tilde{R} = \tilde{R} (-1, \kappa ,\de ,\diam Y, R_0)$ be given in Lemma \ref{E}. For any $0<b<1$, there exists a positive $h_0$ such that if $0<h<h_0$, then, for any $x \in X$ with $\dist (x,X^c)\geq R_0$ and $y \in \R^m$ with $\dist (y,Y^c) \leq b\tilde{R}$, we have $A_Y(y,h) < A_X(x,h)$.

{\rm (This fact follows from Theorem \ref{estimationK}.)}
\end{prop}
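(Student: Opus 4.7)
The plan is to invoke Theorem \ref{estimationK} with $\al = -1$ and $k_\al = k_{-1}$ as given in \eqref{kernelA}. Since the theorem already handles the geometry in full generality, the entire task reduces to verifying the four items of Assumption \ref{assumption} for this specific kernel; once this is done, the conclusion falls out immediately with $\tilde{R} = \tilde{R}(-1,\kappa,\de,\diam Y, R_0)$.

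For Assumption \ref{assumption}(1) I would observe that for each fixed $h > 0$, $k_{-1}(\cdot,h)$ is smooth and strictly decreasing in $r$, and is bounded as $r \to 0^+$ (with value $2h\cdot h^{-(m+1)}/\sigma_m(S^m)$), so $(C^0_\be)$ is satisfied for any $\be > m$. For Assumption \ref{assumption}(2), the factorization $k_{-1}(r,h) = \psi(h)\bar{k}_{-1}(r,h)$ is already written out in \eqref{kernelA}, and $\bar{k}_{-1}(r,h) = (r^2+h^2)^{-(m+1)/2}$ converges pointwise to $r^{-(m+1)} = r^{-1-m}$ as $h \to 0^+$, which matches $r^{\al - m}$ with $\al = -1$.

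Assumption \ref{assumption}(3) is exactly \eqref{total_angle}: the integral of $k_{-1}(|\xi|,h)$ over $\R^m$ equals $(2/\sigma_m(S^m)) \cdot A_{\R^m}(0,h) = 1$. For Assumption \ref{assumption}(4), I would fix $\rho > 0$ and apply \eqref{local_angle} with the body $\Ome = B_\rho(0)$ at the interior point $x=0$ to get
\[
\int_{B_\rho(0)} k_{-1}(|\xi|,h)\, d\xi = \frac{2}{\sigma_m(S^m)}\, A_{B_\rho(0)}(0,h) \longrightarrow 1
\]
as $h \to 0^+$, whereupon the integral over $\R^m \setminus B_\rho(0)$ tends to $0$ by subtraction from (3).

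Since each check is essentially immediate, there is no serious obstacle. The only point worth flagging is that condition $(C^0_\be)$ constrains $k(r,h)$ only as $r \to 0^+$ for each fixed $h$, so the Poisson kernel qualifies despite being bounded in $r$ for $h>0$; its approximation-to-identity character as $h\to 0^+$ is captured separately by conditions (3) and (4), which is precisely the combination of hypotheses that Theorem \ref{estimationK} exploits. Invoking that theorem then produces the desired $h_0$ and completes the proof.
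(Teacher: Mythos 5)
Your proposal is correct and takes exactly the route the paper intends: the paper states at the start of Section~5 that the kernel \eqref{kernelA} satisfies Assumption~\ref{assumption} ``from the facts \eqref{total_angle} and \eqref{local_angle},'' and then cites Theorem~\ref{estimationK}; you have simply carried out that verification in full, including the correct observation that condition~$(C^0_\be)$ only concerns behavior as $r\to 0^+$ at fixed $h$, which the bounded Poisson kernel satisfies with any $\be>m$.
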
 

\begin{cor}\label{centerA} 
Let $\Ome$ be a body in $\R^m$ whose complement satisfies the uniform interior cone condition of aperture angle $\kappa$ and height $\de$. Let $\tilde{R} =\tilde{R} (-1, \kappa ,\de ,\diam \Ome ,R_\infty (\Ome ) )$ be given in Lemma \ref{E}. For any $0<b<1$, there exists a positive $h_0$ such that, for any $0< h<h_0$, any illuminating center of $\Ome$ of height $h$ is contained in the intersection $(\Ome \sim b\tilde{R}B^m) \cap Uf(\Ome )$.

{\rm (This fact follows from Corollary \ref{centerK}.)}
\end{cor}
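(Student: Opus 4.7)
The plan is to invoke Corollary \ref{centerK} directly, after matching the Poisson setting to the general framework of Section 4. The key identification is that an illuminating center of $\Ome$ of height $h$ is the same thing as a $k_{-1}$-center of $\Ome$ at time $h$, where $k_{-1}$ is the kernel displayed in \eqref{kernelA}. Indeed, from \eqref{P} and the discussion following it, we have
\[
A_\Ome (x,h) = \frac{\sigma_m\(S^m\)}{2} P_\Ome (x,h) = \frac{\sigma_m\(S^m\)}{2} K_\Ome^{(-1)} (x,h),
\]
so the maximizers of $A_\Ome(\cdot ,h)$ and of $K_\Ome^{(-1)}(\cdot ,h)$ coincide set-theoretically.

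Next, I would verify that $k_{-1}$ satisfies all four conditions of Assumption \ref{assumption} with $\al =-1$. For (1), $k_{-1}(\cdot ,h)$ is strictly decreasing and, as $r\to 0^+$, behaves like $h\cdot h^{-(m+1)}$, hence satisfies the condition $(C^0_\be)$ for any $\be >0$. For (2), the factorisation $k_{-1}(r,h)=\psi (h)\bar{k}_{-1}(r,h)$ with $\psi (h)=2h/\sigma_m\(S^m\)$ and $\bar{k}_{-1}(r,h)=(r^2+h^2)^{-(m+1)/2}$ gives $\bar{k}_{-1}(r,h)\to r^{-(m+1)}=r^{\al -m}$ for each positive $r$ as $h\to 0^+$. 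Conditions (3) and (4) are exactly the statements \eqref{total_angle} and \eqref{local_angle} specialised to the Poisson kernel: the total mass is $1$ for every $h$, and the mass escapes to the origin as $h\to 0^+$.

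With these verifications in hand, Corollary \ref{centerK} applied with $\al =-1$ yields the following: for the given $0<b<1$, there exists $h_0>0$ such that for every $0<h<h_0$, every $k_{-1}$-center of $\Ome$ at time $h$ belongs to $(\Ome \sim b\tilde{R}B^m ) \cap Uf(\Ome )$, where $\tilde{R}=\tilde{R}(-1,\kappa ,\de ,\diam \Ome , R_\infty (\Ome ))$ as defined in Lemma \ref{E}. Combining with the identification of maximizers from the first paragraph, the same containment holds for every illuminating center of $\Ome$ of height $h$, which is the claim. There is no real obstacle: the whole content of the corollary has already been built into the general statements of Section 4, and the only task is the bookkeeping that converts $A_\Ome$ to $K_\Ome^{(-1)}$ and checks Assumption \ref{assumption} for the Poisson kernel.
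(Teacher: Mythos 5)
Your proposal is correct and takes exactly the paper's route: the paper itself justifies Corollary \ref{centerA} by noting that the Poisson kernel \eqref{kernelA} (with the displayed factorization $\psi(h)\bar k_{-1}(r,h)$) satisfies Assumption \ref{assumption} thanks to \eqref{total_angle} and \eqref{local_angle}, and then invokes Corollary \ref{centerK} with $\al=-1$; your write-up simply makes those verifications explicit.
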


\begin{ex}\label{dumbbellA}
{\rm Let $\ep$, $\Ome_\ep$ and $C$ be as in Example \ref{dumbbellV}. Let $\tilde{R}= \tilde{R} ( -1 ,\kappa ,\de ,2\sqrt{10} ,1)$ be as in Lemma \ref{E}, and fix an $0< \ep < \tilde{R}$. 

Corollary \ref{centerA} guarantees the existence of a positive constant $h_0$ such that if $0<h <h_0$, then any illuminating center of the body $\Ome_\ep$ of height $h$ belongs to the disjoint union of the intervals $( [ -2 ,-1 -\sqrt{\tilde{R}^2-\ep^2} ] \cup [ 1+ \sqrt{\tilde{R}^2-\ep^2} ,2 ] ) \times \{ 0 \}^{m-1}$. Radial symmetry of the Poisson kernel guarantees that each interval has an illuminating center. In particular, the Poisson integral $P_{\Ome_\ep}(\cdot ,h)$ has at least two maximizers for any sufficiently small $h$.
}
\end{ex}

\begin{ex}\label{annulusA}
{\rm Let $\Ome$ and $C$ be as in Example \ref{annulusV}. Let $\tilde{R} = \tilde{R} (-1 ,\kappa, \de ,6, 1)$ be as in Lemma \ref{E}. 

Corollary \ref{centerA} guarantees the existence of a positive constant $h_0$ such that if $0<h<h_0$, then any illuminating center of $\Ome$ of height $h$ belongs to the annulus $B_{2}(0) \sm \c{B}_{1+\tilde{R}} (0)$. Radial symmetry of the Poisson kernel guarantees the existence of a positive constant $1+\tilde{R} \leq \rho (h) \leq 2$ such that the set of illuminating centers of $\Ome$ of height $h$ contains the sphere $\rho (h) S^{m-1}$ for any sufficiently small $h$.
}
\end{ex}

\begin{cor}\label{uniqueA} 
Let $\Ome$ be a convex body in $\R^m$. Let $\tilde{R} = \tilde{R} (-1, \pi ,+\infty ,\diam \Ome ,R_\infty (\Ome ))$ be given in Lemma \ref{E}. Let $0< b<1$, and $\Ome'= ( \Ome \sim b \tilde{R} B^m ) \cap Uf(\Ome )$. There exists a positive constant $h_0$ such that, for any $0< h < h_0$, the Poisson integral $P_\Ome (\cdot ,h)$ is strictly concave on $\Ome'$. In particular, $\Ome$ has a unique illuminating center of height $0<h<h_0$.
\end{cor}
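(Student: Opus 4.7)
The plan is to mimic the proof of Corollary \ref{uniqueK} verbatim, combining the center-location statement of Corollary \ref{centerA} with the concavity criterion of Proposition \ref{concavity}, and supplying the one quantitative computation needed for the Poisson kernel. Since $\Ome$ is a closed convex body, strict separation at every point $x\in\Ome^c$ produces a half-space with apex $x$ inside $\Ome^c$, so $\Ome^c$ satisfies the uniform interior cone condition with $\kappa=\pi$ and $\delta=+\infty$. Corollary \ref{centerA} then yields a constant $h_0^{(1)}>0$ such that every illuminating center of $\Ome$ of height $h<h_0^{(1)}$ lies in $\Ome'$; this $\Ome'$ is convex (intersection of the inner-parallel body of $\Ome$ with $Uf(\Ome)$, both convex) and sits in the interior of $\Ome$, with $d(\Ome,\Ome')\geq b\tilde{R}$.

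The only new computation, and the step that pins down how small $h$ must be, is verifying the hypothesis of Proposition \ref{concavity} for the Poisson kernel $k(r)=k_{-1}(r,h)=\frac{2h}{\sigma_m(S^m)}(r^2+h^2)^{-(m+1)/2}$. A single differentiation gives
\[
\frac{d}{dr}\Bigl[r^{m-1}\bigl(r^2+h^2\bigr)^{-(m+1)/2}\Bigr]
=r^{m-2}\bigl(r^2+h^2\bigr)^{-(m+3)/2}\Bigl[(m-1)h^2-2r^2\Bigr],
\]
so $k(r)r^{m-1}$ is strictly decreasing exactly when $r\geq h\sqrt{(m-1)/2}$. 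Since $d(\Ome,\Ome')\geq b\tilde{R}$, setting $h_0^{(2)}=b\tilde{R}\sqrt{2/(m-1)}$ (understood as $+\infty$ when $m=1$) makes the monotonicity hold throughout $[d(\Ome,\Ome'),D(\Ome,\Ome')]$ for every $0<h<h_0^{(2)}$.

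Setting $h_0:=\min\{h_0^{(1)},h_0^{(2)}\}$, Proposition \ref{concavity} delivers strict concavity of $P_\Ome(\cdot,h)$ on the convex set $\Ome'$ for $0<h<h_0$; since every illuminating center of height $h$ then lies in $\Ome'$, strict concavity forces uniqueness. I do not foresee a serious obstacle: the architecture is Corollary \ref{uniqueK} specialized to the Poisson kernel, and the only nontrivial input is the short monotonicity computation quantifying how small $h$ must be.
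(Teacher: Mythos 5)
Your proposal is correct and takes essentially the same route as the paper: combine the small-height localization of illuminating centers (Corollary \ref{centerA}) with the concavity criterion of Proposition \ref{concavity}, the only quantitative input being the monotonicity threshold for $r\mapsto r^{m-1}(r^2+h^2)^{-(m+1)/2}$. Your computed threshold $h\leq d(\Ome,\Ome')\sqrt{2/(m-1)}$ is in fact the correct one; the paper's stated condition $h\leq\sqrt{(m-1)/2}\,d(\Ome,\Ome')$ has the factor inverted (the two agree only when $m=3$), so your version is a small correction to the paper's.
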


\begin{proof}
We can directly show that if $h \leq \sqrt{(m-1)/2} d( \Ome, \Ome' )$, then the function $r \mapsto (r^2+h^2)^{-(m+1)/2} r^{m-1}$ is decreasing for $d( \Ome, \Ome') \leq r \leq D( \Ome, \Ome')$. Let $h_0' = \sup \{ h>0 \vert \mathcal{A}_\Ome (h') \subset \Ome' \ \forall h' <h \}$. Taking $h_0= \min\{ \sqrt{(m-1)/2} d( \Ome, \Ome' ) ,h_0' \}$, Corollary \ref{uniqueK} implies the conclusion.
\end{proof}

\begin{prop}\label{behaviorA1} 
Let $\Ome$ be a body in $\R^m$ whose complement satisfies the uniform interior cone condition of aperture angle $\kappa$ and height $\de$. For any decreasing sequence $\{ h_\ell \}$ with zero limiting value and any illuminating center $c(h_\ell)$ of height $h_\ell$, the distance between $c(h_\ell)$ and the set of $r^{-(m+1)}$-centers tends to zero as $\ell$ goes to $+\infty$.

{\rm (This fact follows from Theorem \ref{behaviorK1}.)}
\end{prop}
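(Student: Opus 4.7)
The plan is to derive Proposition \ref{behaviorA1} as an immediate specialization of Theorem \ref{behaviorK1}. Identifying $A_\Ome$ with $(\sigma_m(S^m)/2) K_\Ome^{(-1)}$ for the kernel
\[
k_{-1}(r,h) = \frac{2h}{\sigma_m \( S^m \)} \( r^2 + h^2 \)^{-(m+1)/2}
\]
from \eqref{kernelA}, every illuminating center of $\Ome$ of height $h$ is a $k_{-1}$-center of $\Ome$ at time $h$, and conversely. So once $k_{-1}$ is shown to satisfy all four conditions of Assumption \ref{assumption} with $\al = -1$, Theorem \ref{behaviorK1} transcribes verbatim into the desired statement, and $r^{\al - m}$-centers become exactly $r^{-1-m}$-centers.

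The verifications are routine and are in fact already asserted in the sentence following \eqref{kernelA}, but I would spell them out as follows. Condition (1) holds because $(r^2+h^2)^{-(m+1)/2}$ is strictly decreasing in $r$ and stays bounded by $h^{-(m+1)}$ as $r \to 0^+$, so $k_{-1}(\cdot ,h)$ satisfies $(C^0_\be)$ for any $\be > m$. Condition (2) is witnessed by the factorization $k_{-1}(r,h) = \psi (h) \bar{k}_{-1}(r,h)$ with $\psi (h) = 2h/\sigma_m(S^m)$ and $\bar{k}_{-1}(r,h) = (r^2+h^2)^{-(m+1)/2}$; for each fixed $r > 0$, $\bar{k}_{-1}(r,h) \to r^{-(m+1)} = r^{\al - m}$ as $h \to 0^+$. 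Conditions (3) and (4) are encoded respectively in \eqref{total_angle} and \eqml{local_angle}: the total mass identity $A_{\R^m}(0,h) = \sigma_m(S^m)/2$ translates to $\int_{\R^m} k_{-1}(\lvert \xi \rvert ,h) d\xi = 1$, and applying \eqref{local_angle} at the interior point $0$ of $B_\rho (0)$ gives $A_{B_\rho (0)}(0,h) \to \sigma_m(S^m)/2$, so the complementary tail $\int_{\R^m \sm B_\rho (0)} k_{-1}(\lvert \xi \rvert ,h) d\xi$ vanishes as $h \to 0^+$.

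With all four conditions secured, Theorem \ref{behaviorK1} applies to $\Ome$ (whose complement, by hypothesis, satisfies the uniform interior cone condition of aperture angle $\kappa$ and height $\de$) with $\al = -1$. It follows directly that for any decreasing sequence $\{ h_\ell \}$ tending to zero and any illuminating center $c(h_\ell)$, the distance from $c(h_\ell)$ to the set $\mathcal{V}_\Ome (-1)$ of $r^{-1-m}$-centers tends to zero. There is no genuine obstacle here: all the substantive work — the small-parameter estimate packaged in Theorem \ref{estimationK}, the Hadamard-finite-part analysis of $V_\Ome^{(\al)}$, and the compactness/subsequence argument that forces any limit point of $\{ c(h_\ell)\}$ to maximize $V_\Ome^{(-1)}$ — has already been carried out in Theorem \ref{behaviorK1}, and the proposition is its specialization to the Poisson-type kernel.
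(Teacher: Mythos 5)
Your proposal is correct and follows exactly the route the paper intends: recognize the Poisson kernel \eqref{kernelA} as a special case of the kernel in \eqref{Kalpha} with $\al=-1$, verify Assumption \ref{assumption} (conditions (3) and (4) coming from \eqref{total_angle} and \eqref{local_angle} respectively), and invoke Theorem \ref{behaviorK1}. The only blemish is a typo (you wrote a malformed reference where \eqref{local_angle} was meant), which does not affect the substance.
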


\begin{cor}\label{behaviorA2} 
Let $\Ome$ be a convex body in $\R^m$. The set of illuminating centers of height $h$ converges to the set of $r^{-(m+1)}$-centers as $h$ tends to $0^+$ with respect to the Hausdorff distance.

{\rm (This fact follows from Corollary \ref{behaviorK2}.)}
\end{cor}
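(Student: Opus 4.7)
The plan is to specialize Corollary \ref{behaviorK2} to the case $\al = -1$ with the Poisson kernel. First I would confirm, as noted following equation \eqref{kernelA}, that $k_{-1}(r,h) = (2h/\sigma_m(S^m))(r^2+h^2)^{-(m+1)/2}$ satisfies all four conditions in Assumption \ref{assumption}. Condition (1) is immediate: $k_{-1}(\cdot ,h)$ is strictly decreasing and behaves like $O(r^{-(m+1)})$ at the origin, so it fulfills $(C^0_\be)$ with $\be = m-1 > 0$. Condition (2) is the factorization $k_{-1}(r,h) = \psi (h) \bar{k}_{-1}(r,h)$ with $\psi (h) = 2h/\sigma_m(S^m)$ and $\bar{k}_{-1}(r,h) = (r^2+h^2)^{-(m+1)/2}$, and one sees directly that $\bar{k}_{-1}(r,h) \to r^{-(m+1)} = r^{\al -m}$ as $h \to 0^+$ for each fixed $r > 0$. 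Conditions (3) and (4) are precisely the rescaled versions of \eqref{total_angle} and \eqref{local_angle} applied to $\Ome = \R^m$ and $\Ome = \R^m \sm B_\rho (0)$ respectively.

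Next I would identify the $k_{-1}$-centers with the illuminating centers: by construction $K_\Ome^{(-1)}(x,h) = P_\Ome(x,h)$, and the identity $A_\Ome = (\sigma_m(S^m)/2)\, P_\Ome$ shows that $\mathcal{K}_\Ome$ at time $h$ coincides with $\mathcal{A}_\Ome(h)$. Likewise the $r^{\al -m}$-centers in the sense of Section 2 become, for $\al = -1$, the $r^{-(m+1)}$-centers appearing in the statement.

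With these identifications in place, Corollary \ref{behaviorK2} applied to $\al = -1$ and the kernel $k_{-1}$ yields exactly the Hausdorff convergence of $\mathcal{A}_\Ome (h)$ to the set of $r^{-(m+1)}$-centers as $h \to 0^+$. There is no serious obstacle: all the analytic effort (the uniform cone estimate, the small-time comparison in Theorem \ref{estimationK}, the limiting argument in Theorem \ref{behaviorK1}, and the uniqueness via Theorem \ref{uniquenessV}) has already been carried out in the general framework of Sections 3 and 4, so this corollary is a verification that the Poisson kernel fits the hypotheses and a translation of notation.
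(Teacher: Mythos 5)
Your proposal is correct and follows exactly the paper's intended route: verify that the Poisson kernel satisfies Assumption \ref{assumption}, identify illuminating centers with $k_{-1}$-centers (via $A_\Ome = (\sigma_m(S^m)/2)P_\Ome$) and $r^{-(m+1)}$-centers with $r^{\al-m}$-centers for $\al=-1$, then invoke Corollary \ref{behaviorK2}. One small slip: for fixed $h>0$ the kernel $\bar{k}_{-1}(r,h)=(r^2+h^2)^{-(m+1)/2}$ is actually bounded as $r\to 0^+$, so $k_{-1}(\cdot,h)$ satisfies $(C^0_\be)$ with any $\be>m$ (the case $O(1)$), not because it ``behaves like $O(r^{-(m+1)})$ at the origin'' — that is the $h\to 0^+$ limiting behavior, not the fixed-$h$ behavior; moreover $O(r^{-(m+1)})$ would correspond to $\be=-1$, not $\be=m-1$. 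The conclusion that condition (1) holds is nonetheless correct, so the argument stands.
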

\section{Appendix: A lower bound of {\boldmath $\tilde{R}(-1, \pi ,+\infty , \diam \Ome ,R_0)$}}
Let $\Ome$ be a convex body in $\R^m$. Thanks to the convexity of $\Ome$, we can take the uniform boundary inner cone of the complement of $\Ome$ as a half space. Let $0< R_0 < \diam \Ome$. In this appendix, we give a lower bound of $\tilde{R} = \tilde{R}(-1, \pi, +\infty ,\diam \Ome ,R_0)$. Let us estimate the zero-point of the function
\begin{equation}
E (R) = \( \int_{C\( Re_1; \pi ,+\infty \) \cap B_{\diam \Ome}(0)} -\int_{B_{\diam \Ome}(0) \sm B_{R_0}(0)} \) \lvert \xi \rvert^{-(m+1)} d \xi .
\end{equation}

Let $\f (R)=\arccos (R/\diam \Ome)$. Using the polar coordinate, we obtain 
\begin{align}
\nonumber
E(R)
&=\sigma_{m-2} \( S^{m-2}\) \int_0^{\f(R)} \( \int_{R/\cos \theta}^{\diam \Ome} r^{-2}dr \) \sin^{m-2}\theta d\theta\\
\nonumber
&\quad -\sigma_{m-2} \( S^{m-2}\)\int_0^\pi \( \int_{R_0}^{\diam \Ome} r^{-2} dr\) \sin^{m-2}\theta d\theta\\
\nonumber
&=\frac{\sigma_{m-2} \( S^{m-2}\)}{R} \( \frac{\sin^{m-1} \f(R)}{m-1} +\frac{R}{\diam \Ome} \int_{\f(R)}^\pi \sin^{m-2}\theta d\theta -\frac{R}{R_0}\int_0^\pi \sin^{m-2}\theta d\theta \) \\
&=: \frac{\sigma_{m-2} \( S^{m-2}\)}{R}f(R).
\end{align}
Direct computation shows the following properties:
\begin{align}
f'(R)&=\frac{1}{\diam \Ome} \int_{\f(R)}^\pi \sin^{m-2}\theta d\theta -\frac{1}{R_0} \int_0^{\pi} \sin^{m-2}\theta d\theta <0,\\
f''(R)&=-\f'(R) \sin^{m-2} \f (R) >0.
\end{align}
Since we have
\begin{equation}
f'(0)= \( \frac{1}{\diam \Ome} -\frac{2}{R_0} \) \int_0^{\pi/2} \sin^{m-2}\theta d\theta ,
\end{equation}
we obtain 
\begin{align}
\nonumber
\tilde{R} \( -1, \pi ,+\infty ,\diam \Ome , R_0\)
&> -\frac{f(0)}{f'(0)}\\
\nonumber
&=\frac{1}{\ds (m-1)\( \frac{2}{R_0}-\frac{1}{\diam \Ome} \) \int_0^{\pi/2} \sin^{m-2}\theta d\theta}\\
&\geq \frac{R_0}{\ds 2(m-1)\int_0^{\pi/2} \sin^{m-2}\theta d\theta} .
\end{align}
For example, in the case of $m=2$, the above lower bound coincides with $R_0 / \pi \approx 0.3183 R_0$.

\no 
Faculty of Education and Culture,\\
University of Miyazaki,\\
1-1, Gakuen Kibanadai West, Miyazaki city, Miyazaki prefecture, 889-2155, Japan\\
E-mail: sakata@cc.miyazaki-u.ac.jp

\begin{thebibliography}{99}
\bibitem{BMS}
  L. Brasco, R. Magnanini and P. Salani,
  {\it The location of the hot spot in a grounded convex conductor},
  Indiana Univ. Math. J. {\bf 60} (2011), 633--660.
\bibitem{BM}
  L. Brasco and R. Magnanini,
  {\it The heart of a convex body},
  Geometric properties for parabolic and elliptic PDE's (R. Magnanini, S. Sakaguchi and A. Alvino eds), Springer INdAM {\bf 2} (2013), 49--66.
\bibitem{BP}
  H. Busemann and C. M. Petty,
  {\it Problems on convex bodies},
  Math. Scand. {\bf 4} (1956), 88--94.
\bibitem{CK}
  I. Chavel and L. Karp, 
  {\it Movement of hot spots in Riemannian manifolds},
  J. Analyse Math. {\bf 55} (1990), 271--286. 
\bibitem{GNN}
  B. Gidas, W. M. Ni and L. Nirenberg,
  {\it Symmetry and related properties via the maximum principle},
  Comm. Math. Phys. {\bf 68} (1979), 209--243.
\bibitem{HMP}
  I. Herburt, M. Mos\'{z}ynska and Z. Perad\'{z}ynski,
  {\it Remarks on radial centres of convex bodies},
  Math. Phys. Anal. Geom. {\bf 8} (2005), 157--172.
\bibitem{H1}
  I. Herburt,
  {\it On the uniqueness of gravitational centre},
  Math. Phys. Anal. Geom. {\bf 10} (2007), 251--259.
\bibitem{H2}
 I. Herburt,
 {\it Location of radial centres of covex bodies}, 
 Adv. Geom. {\bf 8} (2008), 309--313.
\bibitem{JS}
  S. Jimbo and S. Sakaguchi,
  {\it Movement of hot spots over unbounded domains in $\R^N$},
  J. Math. Anal. Appl. {\bf 182} (1994), 810--835.
\bibitem{KP}
  L. Karp and N. Peyerimhoff,
  {\it Geometric heat comparison criteria for Riemannian manifolds},
  Ann. Glob. Anal. Geom. {\bf 31} (2007), 115--145.
\bibitem{L}
  E. Lutwak,
  {\it Intersection bodies and dual mixed volumes},
  Adv. Math. {\bf 71} (1988), 232--261.
\bibitem{MS}
  R. Magnanini and S. Sakaguchi,
  {\it On heat conductors with a stationary hot spot},
  Anal. Mat. Pura. Appl. (4) {\bf 183} (2004), no.1, 1--23.
\bibitem{M1}
  M. Moszy\'{n}ska,
  {\it Looking for selectors of star bodies},
  Geom. Dedicata {\bf 81} (2000), 131--147.  
\bibitem{M2}
  M. Moszy\'{n}ska,
  {\it Selected topics in convex geometry},
  Birkh\"{a}user, Boston, 2006.
 \bibitem{O1}
  J. O'Hara,
  {\it Energy of a knot},
  Topology {\bf 30} (1991), 241--247.
\bibitem{O2}
  J. O'Hara,
  {\it Family of energy functionals of knots},
  Topology Appl. {\bf 48} (1992), 147--161. 
\bibitem{O3}
  J. O'Hara,
  {\it Renormalization of potentials and generalized centers},
  Adv. in Appl. Math. {\bf 48} (2012), 365--392.
\bibitem{O4}
  J. O'Hara,
  {\it Minimal unfolded regions of a convex hull and parallel bodies},
  to appear in Hokkaido Math. J.
\bibitem{Sak1}
  S. Sakata,
  {\it Movement of centers with respect to various potentials},
  Trans. Amer. Math. Soc. {\bf 367} (2015), 8347--8381. 
\bibitem{Sak2}
  S. Sakata,
  {\it Experimental investigation on the uniqueness of a center of a body},
  preprint. 
\bibitem{Ser}
  J. Serrin, {\it A symmetry problem in potential theory},
  Arch. Rational. Mech. Anal. {\bf 43} (1971), 304--318.
\bibitem{Sh}
  K. Shibata,
  {\it Where should a streetlight be placed in a triangle-shaped park? Elementary integro-differential geometric optics},
  http://www1.rsp.fukuoka-u.ac.jp/kototoi/shibataaleph-sjs.pdf 
\end{thebibliography}
\end{document}